\documentclass[11pt,letterpaper]{amsart}
\usepackage[margin=1in]{geometry}
\usepackage{graphicx} 
\usepackage{amsmath}
\usepackage{numprint}
\usepackage{comment}
\usepackage{amsthm}
\usepackage{amssymb}
\usepackage[shortlabels]{enumitem}
\usepackage{quiver}
\usepackage{soul}
\usepackage{tikz}
\usepackage{tikz-3dplot}
\usetikzlibrary{patterns}
\usepackage{mathtools}
\newtheorem{theorem}{Theorem}[section] 
\newtheorem{proposition}[theorem]{Proposition}
\newtheorem{question}[theorem]{Question}
\newtheorem{corollary}[theorem]{Corollary}

\theoremstyle{definition}
\newtheorem{definition}[theorem]{Definition}
\newtheorem{example}[theorem]{Example}

\newtheorem{remark}[theorem]{Remark}

\usepackage{tikz-cd}
\usepackage[normalem]{ulem}
\usepackage{biblatex}
\usepackage{amsfonts}
\usepackage{hyperref}

\newcommand{\il}[1]{\textit{#1}}
\newcommand{\g}[1]{\textbf{#1}}

\usepackage{xcolor}

\usepackage{tikz}
\usetikzlibrary{fit}

\newtheorem{manualtheoreminner}{Theorem}
\newenvironment{manualtheorem}[1]{%
  \IfBlankTF{#1}
    {\renewcommand{\themanualtheoreminner}{\unskip}}
    {\renewcommand\themanualtheoreminner{#1}}%
  \manualtheoreminner
}{\endmanualtheoreminner}

\title{Poset partitions and the Combinatorics of the $\textbf{cd}$-index}

\author[F. Caster]{Felipe Caster}
\address{Facultad de Matem\'aticas, Pontificia Universidad Cat\'olica, Santiago, Chile}
\email{felipe.caster@uc.cl}
\author[D. Guyer]{Dan Guyer}
\address{Department of Mathematics, University of Washington, Seattle, WA, United States}
\email{dguyer@uw.edu}
\author[J.A. Samper]{Jos\'e Alejandro\ Samper}
\address{Facultad de Matem\'aticas, Pontificia Universidad Cat\'olica, Santiago, Chile}
\email{jsamper@uc.cl}

\thanks{FC is partially supported by ANID Becas/Magister Nacional 22250673. DG is partially\textsl{} supported by NSF grant  DMS-2246399. JAS is partially supported by ANID FONDECYT Iniciaci\'on grant \#11221076.}

\begin{document}

		\begin{abstract}
		We introduce a new class of Eulerian posets, called \il{S}-partitionable posets, which have a non-negative \g{cd}-index. These posets are a generalization of \il{S}-shellable complexes introduced by Stanley in 1994. We prove that \il{S}-partitionable posets have a non-negative \g{cd}-index via a recursive formula. Then, we introduce a semi-Eulerian version of \il{S}-partitionable posets, which we call \il{SE}-partitionable posets. We show that \il{SE}-partitionable posets also have a non-negative semi-Eulerian \g{cd}-index as defined by Juhnke-Kubitzke, Samper and Venturello in 2024. 
        \end{abstract}
\maketitle
\section{Introduction}  

The theory of partially ordered sets, or posets, is a central area in enumerative and algebraic combinatorics that was brought to the spotlight by Rota in his influential work ~\cite{Rota_1964} as highlighted by Stanley~\cite{Stanley_2021}. One of the key insights of Rota is that there is a deep connection between enumeration and topology. 

The order complex of a poset is the simplicial complex whose vertices are elements and whose faces are chains. In many instances, the combinatorial invariants of the poset, like the M\"obius function, are equivalent to topological data, such as the Euler characteristic. A topic that has received considerable attention is that of relating combinatorial and topological invariants of both families. For instance, what can be said about the combinatorial structure of a poset whose order complex is homeomorphic to a sphere, a manifold, or some other space of interest? The celebrated $g$-theorem \cite{Billera_1981,adiprasito_2019,papadakis_2020,Karu_2023} gives a numerical classification of all possible $f$-vectors of triangulated spheres, which are numbers enumerating elements of the face poset by rank. 

From a combinatorial point of view, spheres and related spaces are difficult to understand; thus, a frequent approach is to consider a wider class of objects that is defined in terms of the combinatorial structure of the poset. For example, all spheres are Gorenstein* posets, and all Gorenstein* posets are Eulerian. Both classes are rich in examples and extract key properties of spheres for some of the results mentioned below. Roughly, Gorenstein* posets (over a field $\mathbb{F}$) are posets such that homology groups cannot distinguish them from a sphere, while Eulerian posets are posets such that the Euler characteristic cannot distinguish them from spheres. 

An interesting combinatorial invariant for any poset concerns the counts of chains of different types. If $\Omega$ is a bounded, graded poset of rank $d+1$, and $K\subseteq [d]$, let $f_K(\Omega)$ be the number of chains of $\Omega$ such that the set of ranks of its elements is $K$. The \textit{flag $f$-vector} of $\Omega$ is the vector $(f_K(\Omega))_{K\subseteq[d]}$. 

For Eulerian posets, the flag $f$-vector satisfies a collection of linear relations called \textit{generalized Dehn--Sommerville equations}. Inspired by the classical Dehn--Sommerville equations, Bayer and Billera \cite{Billera_1983,Billera_1985} determined all the linear relations satisfied by the collection of all Eulerian posets of rank $d+1$ and showed that the affine space has an affine basis formed by $F_{d+1}-1$ polytopes, where $F_n$ denotes the $n$-th Fibonacci number. 

The \g{cd}-index, discovered by Jonathan Fine and developed by Bayer-Klapper~\cite{Bayer1991}, provides an optimal codification of the flag $f$-vector. The idea is to express the entries of the flag $f$-vector as a non-negative linear combination of the coefficients of a degree $d$ homogeneous noncommutative polynomial in variables \g{c} and $\g{d}$ of degrees $1$ and $2$ respectively.

By the nature of the combination, the coefficients of the $\g{cd}$-index are smaller than the coefficients of the $f$-vector whenever they are positive. Unfortunately, there are Eulerian posets whose $\g{cd}$-index has negative coefficients, so the meaning of these coefficients is unclear in general. Yet, the $\g{cd}$-index is non-negative for many posets of interest. In 1994, Stanley \cite{Stanley1994} inductively proved the non-negativity of the \g{cd}-index of all \il{S}-shellable spheres, and hence proved that the \g{cd}-index of all polytopes is non-negative. In the same article, he conjectured that all Gorenstein* posets have a non-negative \g{cd}-index and proved it for all simplicial Gorenstein* posets. Later, in 1999, Stanley~\cite{Stanley_1999} wrote, ``In general, the \g{cd}-index is a highly intractable object. It would be of great interest to find a natural algebraic or geometric description of the \g{cd}-index." The conjecture was settled by Karu~\cite{Karu_2006} using cohomological methods and Yuzvinsky's theory of sheaves over posets to realize the \g{cd} coefficients as dimensions of different vector spaces. 

There have been several notions defining generalizations of the \g{cd}-index. Ehrenborg~\cite{ehrenborg01} extended the \g{cd}-index theory to the class of $k$-Eulerian posets. Ehrenborg, Goresky, and Readdy~\cite{Ehrenborg_2015} also defined a \g{cd}-index for Whitney stratified manifolds. More recently, Juhnke-Kubitzke, Samper, and Venturello~\cite{semi} defined the \g{cd}-index for semi-Eulerian posets, which include all regular CW-manifolds. They give several reasons why this definition is natural. In particular, they show that the
\g{cd}-index of any Buchsbaum simplicial poset is well defined and non-negative, a class that includes all posets
whose order complex is a homology manifold. They also outline several directions for further
investigation. 

In most cases above, the proofs of positivity do not provide a combinatorial meaning for the coefficients of the \g{cd}-index for any class of Eulerian posets. In fact, in a recent survey, Bayer~\cite{Bayer_2021} states the following: ``There are two main issues for research on \g{cd}-indices. One is the question of the non-negativity of the coefficients, or, more generally, inequalities on the \g{cd}-index, for Eulerian posets or for particular subclasses. The other (related) issue is the combinatorial interpretation of the coefficients.'' Our goal in this article is to make progress on both of these aspects by developing some new combinatorial tools to prove \g{cd}-positivity. The main idea is to extend the notion of \il{S}-shellability to a notion of \il{S}-partitionability in a way similar to how partitionability of simplicial complexes extends shellability for the computation of h-vectors. 

In the theory of face enumeration of simplicial complexes, the $h$-vector is an integer vector that indirectly counts faces of different dimensions. Some entries of this vector may be negative, but there are remarkable families of simplicial complexes whose entries are non-negative. Shellability and partitionability are classical combinatorial tools that imply the non-negativity of the $h$-vector. A shelling builds a complex inductively in an organized way, and a partition extracts the essential features of a shelling that yield a combinatorial interpretation for the entries of the $h$-vector. Our main goal is to study posets that are currently out of reach by means of algebraic methods like those used by Karu and provide tools to explore the semi-Eulerian case further. 

The $\g{cd}$-index is a generalization of the $h$-vector for Eulerian posets: it has integral entries encoding all linear relations of the flag $f$-vector, and while some of the entries can be negative, there are many classes of Eulerian posets that have non-negative coefficients. Stanley's notion of \il{S}-shellability adapts classical shellability to the context of the \g{cd}-index to prove that all \il{S}-shellable posets have non-negative \g{cd}-index. Unlike the combinatorial interpretation of $h$-vector as enumerating facets by sizes of restriction sets, the recursive proof by Stanley does not yield an easy way to interpret the coefficients. \il{S}-shellability was inspired by line shellings and Lee~\cite{Lee2010} showed how to partition the face poset of a polytope to enumerate the coefficients of the $\g{cd}$-index in terms of an \il{S}-shelling of its dual. 

Inspired by Lee's work, we define the notion of \il{S}-partitionability: a way to partition the elements of an Eulerian poset that extracts the key features guaranteeing the non-negativity of the $\g{cd}$-index of an \il{S}-shellable Eulerian poset. An \il{S}-partition of an Eulerian poset $\Omega$ is a partition of the elements of $\Omega\setminus \{\hat{1}\}$, with one partition class per coatom that satisfies certain recursive properties. All \il{S}-shellable spheres and face posets of partitionable Eulerian simplicial complexes are \il{S}-partitionable. Moreover, our proof techniques remove the geometric assumptions of Lee~\cite{Lee2010} and apply the recursion(s) in a way that each coatom contributes a non-negative sum of \g{cd}-words to the \g{cd}-index of $\Omega$. One immediate advantage of \il{S}-partitionability is that it imposes fewer topological restrictions than shellability: indeed \il{S}-shellable posets have order complexes homeomorphic to spheres, whereas we are able to provide several examples of other homology manifolds admitting \il{S}-partitions. Our first main theorem is as follows:

\begin{manualtheorem}{A}
    \textit{The \g{cd}-index of an \il{S}-partitionable poset can be recursively computed in terms of its \il{S}-partitionable parts of smaller rank, and is consequently non-negative.} 
\end{manualtheorem}

The proof of the theorem mimics Lee's proof for the decomposition of the $\g{cd}$-index of a polytope, but it replaces several of the geometric notions with suitable combinatorial analogs. We further obtain a partition of the face lattice of the order complex that counts the \g{cd}-index of the initial poset, but the decomposition depends on some non-explicit steps that are guaranteed to exist by the generalized Dehn--Sommerville equations. 

For semi-Eulerian complexes, the \il{S}-partitionability is not enough, and the modification of the flag $f$-vector that is needed to define the $\g{cd}$-index poses an additional enumerative challenge. We propose a relaxation of \il{S}-partitionability, called \il{SE}-partitionability, in which the recursive part is slightly modified to accommodate the corrections made to the flag $f$-vector. Every \il{S}-partitionable poset is \il{SE}-partitionable, and there are several examples of manifolds that are not Eulerian but are \il{SE}-partitionable. Our second main theorem reads as follows: 

\begin{manualtheorem}{B}
    \textit{The \g{cd}-index of an \il{SE}-partitionable poset can be recursively computed in terms of its \il{S}-partitionable parts of smaller rank, and is consequently non-negative.} 
\end{manualtheorem}

Again, the proof is an adaptation of the case of \il{S}-partitionable posets with the appropriate modifications. Several interesting examples follow. For instance, any simplicial semi-Eulerian complex admitting a partition is \il{SE}-partitionable. Furthermore, all triangulations of two dimensional simplicial semi-Eulerian pseudomanifolds are \il{SE}-partitionable as well. Our results provide several heuristic reasons to expect positivity, and we hope they will be useful in establishing positivity (or reasonable lower bounds) for the \g{cd}-index of regular CW-homology manifolds.

This article is organized as follows. Section 2 presents definitions and preliminary results about posets, $\g{cd}$-indices, and other related notions. Section 3 is devoted to developing the theory \il{S}-partitionable posets and proving the non-negativity of their \g{cd}-indices. Section $4$ discusses the variations needed to define \il{SE}-partitionable posets. Section $5$ discusses some open problems, future directions, and possible connections to existing literature.

\section{Preliminaries}
We begin by introducing standard poset notation that will be used throughout the paper, and reviewing some background on the \g{cd}-index for Eulerian and semi-Eulerian posets. For more details see \cite{Stanley_ec1,Stanley1994,Bayer_2021,semi}. For content related to polytopes, we refer to Ziegler~\cite{Ziegler_1995}, and for content related to combinatorial decompositions of simplicial complexes, we refer to Hachimori~\cite{Hachimori_2000}.

Throughout, we work with finite graded posets $\Omega$ having a unique minimum element $\hat{0}$ of rank $0$ and a unique maximum element $\hat{1}$ of rank $d+1$. If $\Omega$ is the face poset of a regular CW complex, we will usually refer to the face poset and the regular CW complex itself by $\Omega$.

\begin{definition}
For a poset $\Omega$, the \textit{order complex} $\Delta(\Omega)$ is the simplicial complex given by all chains in $\Omega\setminus \{\hat{0},\hat{1}\}$.
\end{definition}

For a regular CW complex $\Omega$, the geometric realization of its order complex $\Delta(\Omega)$ is homeomorphic to the barycentric subdivision of $\Omega$, and hence to $\Omega$ itself~\cite[pg.~80]{Lundell_1969}. Thus, the face poset of $\Omega$ encodes all the essential combinatorial and topological information of the complex. In particular, we are interested in studying combinatorial invariants of special classes of regular CW complexes, such as regular CW homology spheres and homology manifolds.

\begin{definition}
A graded poset $\Omega$ with $\hat{0}$ and $\hat{1}$ is called \textit{Eulerian} if every interval $[x,y]$ with $x<y$ has the same number of elements of odd rank as elements of even rank. Equivalently, a graded poset with $\hat{0}$ and $\hat{1}$ is \textit{Eulerian} if its Möbius function satisfies $\mu([x,y]) = (-1)^{\,r(y)-r(x)}$ for all intervals $[x,y]$.
\end{definition}

Typical examples of Eulerian posets include face lattices of convex polytopes and, more generally, the face posets of regular homology spheres. Face posets of odd-dimensional manifolds are also Eulerian, whereas even-dimensional manifolds need not be—these give rise to the broader class of semi-Eulerian posets.

\begin{definition}
A graded poset $\Omega$ with $\hat{0}$ and $\hat{1}$ is called \textit{semi-Eulerian} if every interval $[x,y]\neq [\hat{0},\hat{1}]$ with $x<y$ has the same number of elements of odd rank as elements of even rank. Equivalently, a graded poset with $\hat{0}$ and $\hat{1}$ is \textit{semi-Eulerian} if the Möbius function satisfies $\mu([x,y]) = (-1)^{\,r(y)-r(x)}$ for all intervals $[x,y]\neq [\hat{0},\hat{1}]$.
\end{definition}

Thus, semi-Eulerian posets are bounded graded posets whose local Euler characteristic agrees with that of a sphere of the appropriate dimension, though not necessarily globally. Examples of semi-Eulerian posets include face posets of regular CW decompositions of connected topological manifolds or, more generally, of homology manifolds.  

\begin{definition}
   A \textit{pseudomanifold} is a $d$-dimensional simplicial complex $\Delta$ that satisfies:  
(1) $\Delta$ is pure;  
(2) every ridge (codimension-one face) lies in exactly two facets;  
(3) $\Delta$ is strongly connected: Any two facets $\sigma_0$, $\sigma_k$ of $\Delta$ can be connected by a sequence of adjacent facets $\sigma_0,\sigma_1,\hdots,\sigma_k$, i.e, $\sigma_i\cap \sigma_{i+1}$ is a common $(d-1)$-dimensional face for $0\le i \le k-1$. In particular, triangulations of connected homology manifolds are pseudomanifolds.  
\end{definition}

Next, we discuss some definitions that will be heavily relied upon in the recursive decomposition of an \il{S}-partitionable poset and its \g{cd}-index. 

\begin{definition}
    We say that $\Omega$ is \textit{near-Eulerian} if it can be obtained from an Eulerian poset by deleting a single coatom.
\end{definition} 

Near-Eulerian posets arise naturally when building a recurrence for the \g{cd}-index as done in Stanley~\cite{Stanley1994}. To precisely describe parts of a poset, we introduce the following definitions.

\begin{definition}
For $p\in \Omega$, we write $\overline{p} \coloneq [\hat{0},p]$ for the (down) \textit{closure} of $p$, and for a collection $\mathcal{C}\subseteq \Omega$, we define $
\overline{\mathcal{C}} \coloneq \bigcup_{p\in \mathcal{C}} \overline{p}.$
\end{definition}

\begin{definition}
    For a near-Eulerian poset $\Omega$, the closure of the union of all $y$ with $[y,\hat{1}]$ a three-element chain is called the \textit{boundary} of $\Omega$.
\end{definition}

\begin{remark}
If $\Omega$ is near-Eulerian, then the associated Eulerian poset can be reconstructed by adjoining a new coatom $\tau$ that covers every $y$ with $[y,\hat{1}]$ a three-element chain. This operation is called the \textit{semisuspension} of $\Omega$, denoted $\tilde{\Sigma}\Omega$.
\end{remark}
For a graded poset of $\Omega$ of rank $d+1$ with rank function $r$ and $K\subseteq [d]$, the $K$-flag number $f_K(\Omega)$ denotes the number of chains $G = \{x_1 \prec x_2 \prec \hdots \prec x_k:x_i\in \Omega\}$ whose \textit{rank set} is $r(G) = \{r(x_i) : 1 \le i \le k\}=K$.
The \textit{flag $f$-vector} $(f_K)_{K\subseteq [d]}$ allows one to encode much combinatorial information of $\Omega$. However, one often works with the \textit{flag $h$-vector} $(h_K)_{K\subseteq [d]}$, defined by the invertible transformation
$$ h_K(\Omega) = \sum_{T\subseteq K} (-1)^{|K\setminus T|} f_T(\Omega).$$

An alternative encoding of the flag $f$-vector and flag $h$-vector uses polynomials in non-commuting variables $\g{a,b}$ in $\mathbb{Z}\langle \g{a},\g{b}\rangle$. The \textit{\g{ab}-polynomial} $\Psi_\Omega(\g{a},\g{b})$ and \emph{chain polynomial} $\Upsilon_\Omega(\g{a},\g{b})$ are given by
$$
\Psi_\Omega(\g{a},\g{b}) = \sum_{K\subseteq [d]} h_K(\Omega)\, u_K, 
\qquad
\Upsilon_\Omega(\g{a},\g{b}) = \sum_{K\subseteq [d]} f_K(\Omega)\, u_K,
$$
where $u_K = u_1 \cdots u_n$ with $u_i = \g{a}$ if $i\notin K$ and $u_i = \g{b}$ if $i\in K$. These polynomials are related by
$$
\Upsilon_\Omega(\g{a}-\g{b},\g{b}) = \Psi_\Omega(\g{a},\g{b}).
$$

The \il{generalized Dehn--Sommerville equations}, introduced by Bayer-Billera~\cite{Billera_1983}~\cite{Billera_1985}, describe the affine span of all flag $f$-vectors of Eulerian posets of rank $d+1$. More concretely, for any $K\subseteq  [d]$, we have
\[
\sum_{j=i+1}^{k-1} (-1)^{\,j-i-1}\, f_{K\cup\{j\}}(\Omega) 
= \bigl(1-(-1)^{\,k-i-1}\bigr)\, f_K(\Omega),
\]
where $i \le k-2$, $i,k \in K \cup \{0,d+1\}$, and $K \cap \{i+1,\dots,k-1\} = \varnothing$. 

\begin{definition}
Let $\Omega$ be a graded poset of rank $d+1$. The \emph{\g{cd}-index} of $\Omega$ is the unique polynomial $\Phi_\Omega(\g{c},\g{d})$ with $\g{c}=\g{a}+\g{b}$ and $\g{d}=\g{ab}+\g{ba}$ such that
\[
\Phi_\Omega(\g{a}+\g{b},\g{ab}+\g{ba}) = \Psi_\Omega(\g{a},\g{b}),
\]
whenever such a polynomial exists. 
\end{definition}
We may often abuse notation and write $\Phi(\Omega)$ as shorthand for $\Phi_\Omega(\g{c},\g{d})$. If $\Omega$ has a \g{cd}-index, then $\Phi_\Omega(\g{c},\g{d})$ is unique and homogeneous of degree $d$ in $\mathbb{Z}\langle \g{c},\g{d}\rangle$, assigning degree $1$ to \g{c} and degree $2$ to \g{d}. The number of monomials (\g{cd}-words) of degree $d$ equals the $(d+1)^{\text{th}}$ Fibonacci number $F_{d+1}$ defined with initial conditions $F_1=F_2=1$.

More precisely, we consider the unique graded map $$\Phi\colon \mathbb Z\langle \g c,\g d\rangle \to \mathbb Z\langle \g a, \g b\rangle$$ sending $\g c\mapsto \g a+\g b$ and $\g d\mapsto \g{ab}+\g{ba}$. We say that a non-commmutative polynomial in variables $\g{a}$ and $\g{b}$ has a $\g{cd}$-index if it lies in the image of $\Phi$. Furthermore, a chain polynomial lies in the image if and only if its corresponding flag $f$-vector satisfies the generalized Dehn--Sommerville equations. In particular, the flag $f$-vector of every Eulerian poset satisfies these relations.
\begin{theorem}[\cite{Bayer1991}]\label{thm: BayerKlapper}

A graded poset has a \g{cd}-index if and only if its flag $f$-vector satisfies the generalized Dehn--Sommerville equations. In particular, every Eulerian poset has a \g{cd}-index.
\end{theorem}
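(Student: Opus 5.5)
Our approach is to separate the statement into a purely algebraic part and a combinatorial part. The algebraic part describes the image of the map $\Phi\colon \mathbb Z\langle \g c,\g d\rangle\to\mathbb Z\langle \g a,\g b\rangle$. The combinatorial part checks that the flag $f$-vector of an Eulerian poset satisfies the generalized Dehn--Sommerville equations. The "in particular" then follows from the first part once the second is established.

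\emph{Algebraic part.} In degree $d$, the image of $\Phi$ has dimension $F_{d+1}$, because $\Phi$ is injective. To see injectivity, work in the commutative-free setting: $\g c$ and $\g d$ generate a free subalgebra, which one can check by leading terms in a suitable ordering. We would show that the polynomials $\Upsilon(\g a-\g b,\g b)$ lying in the image are exactly those whose coefficient vector $(f_K)$ satisfies the displayed linear relations. One inclusion needs every $\g{cd}$-monomial to satisfy each relation. We would verify this by working directly with $\Psi$. Set $\g a=\g b$ in a segment: a homogeneous $\g{cd}$-word $w$ of degree $m$, after the substitution, is invariant under $\g a\leftrightarrow\g b$ and its reversal behaves well. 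The relation indexed by $(i,k,K)$ is then a statement about a segment of length $k-i-1$ strictly between consecutive elements of $K\cup\{0,d+1\}$, and it reduces to the identity that, for each interval segment, the alternating sum $\sum_j(-1)^{j}$ of the "single $\g b$" coefficients matches $1-(-1)^{m}$ times the empty coefficient. This is checked on $\g c^m$, on $\g d$, and multiplicatively on products.

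The other inclusion is a dimension count. We would show that the solution space of the generalized Dehn--Sommerville equations in degree $d$ has dimension at most $F_{d+1}$. The plan is to exhibit that the $f_K$ with $K$ \emph{sparse}, meaning no two consecutive elements and $d\notin K$ (or a similar Fibonacci-counted family), determine all others via the relations by induction on $|K|$ and on gaps. Equality of dimensions then gives equality of spaces.

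\emph{Eulerian part.} Fix $K$, $i<k$ adjacent in $K\cup\{0,d+1\}$, and group the chains counted by $f_{K\cup\{j\}}$ by their restriction $G$ to $K$. For each such $G$ with elements $x$ of rank $i$ and $y$ of rank $k$, the left-hand side becomes $\sum_{x<z<y}(-1)^{r(z)-i-1}$. Eulerianity of $[x,y]$ gives $\sum_{x\le z\le y}(-1)^{r(z)}=0$, so this inner sum equals $1-(-1)^{k-i-1}$. Summing over $G$ yields the relation, and Theorem-level existence follows from the algebraic part.

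The main obstacle is the upper bound on the dimension of the solution space, namely the reduction to sparse $K$. We would first try the standard Bayer--Billera elimination: use the relation with the smallest gap containing two consecutive ranks to express $f_{K}$ with adjacent elements in terms of sets with fewer adjacencies.
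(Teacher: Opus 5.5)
The paper gives no proof of this statement; it is quoted from Bayer--Klapper, with the relations themselves due to Bayer--Billera. Your skeleton is the standard linear-algebra route: injectivity of $\Phi$, inclusion of its image in the solution space, an upper bound $F_{d+1}$ on that space, and the interval computation for Eulerian posets. The injectivity claim is fine, e.g.\ via leading words $\g c\mapsto\g b$, $\g d\mapsto\g{ba}$ in deglex with $\g b>\g a$, which are uniquely decodable. Your Eulerian part is correct as written.

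The gap is in the step you treat as routine: that every $\g{cd}$-monomial satisfies the relations. These relations are conditions on the coefficients of $\Upsilon(\g a,\g b)=\Psi(\g a+\g b,\g b)$, not of $\Psi$. Read literally in $\Psi$-coordinates, your segment identity already fails for $\g c^m$ with $m$ odd. All single-$\g b$ coefficients of $(\g a+\g b)^m$ equal $1$, so the alternating sum is $1$, not $1-(-1)^m=2$. Concretely, for $d=1$, $\Psi=\g c$ has $h_{\{1\}}=h_\emptyset=1$, while the relation demands $f_{\{1\}}=2f_\emptyset$. Moreover, a $\g{cd}$-monomial does not factor along the segment $\{i+1,\dots,k-1\}$, because a $\g d$ may straddle positions $i,i+1$ or $k-1,k$. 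Until that is handled, ``checking on $\g c^m$, on $\g d$, and multiplicatively'' has no content. The $\g a\leftrightarrow\g b$ symmetry and reversal play no role.

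To repair it, substitute $\g a\mapsto\g a+\g b$, so $\g c\mapsto\g a+2\g b$ and $\g d\mapsto\g{ab}+\g{ba}+2\g{bb}$. The relation indexed by $(i,k,K)$ only reads coefficients of words $usv$ where $u$ ends in $\g b$ (position $i$), $v$ begins with $\g b$ (position $k$), and $s\in\{\g a^m,\ \g a^{j-1}\g b\g a^{m-j}\}$.
\begin{itemize}
\item A straddling $\g d$ has its outer letter forced to be $\g b$. Then $\g{ba}+2\g{bb}$ (resp.\ $\g{ab}+2\g{bb}$) contributes exactly $\g a+2\g b$ to the segment letter.
\item Hence the coefficient of $usv$ in $\Upsilon_w$ is $P_u\cdot[s]\Upsilon_{w'}\cdot S_v$, with $P_u,S_v$ independent of $s$. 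Here $w'$ is the degree-$m$ $\g{cd}$-word obtained by turning each straddling $\g d$ into a $\g c$.
\item In $\Psi$-coordinates a straddling $\g d$ would contribute only $\g a$, which is why the localization needs $\Upsilon$.
\end{itemize}
The isolated identity is then immediate:
\begin{itemize}
\item For $\g c^m$ one gets $2\sum_{j=1}^m(-1)^{j-1}=1-(-1)^m$, against $[\g a^m]=1$.
\item With exactly one $\g d$, in positions $p+1,p+2$, the two single-$\g b$ words carry opposite signs and $[\g a^m]=0$.
\item With two or more $\g d$'s, all relevant coefficients vanish.
\end{itemize}

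Finally, the dimension bound you call the main obstacle is routine, but ``fewer adjacencies'' is not a valid induction measure. Killing an adjacency $j,j+1\in S$ via the relation with $K=S\setminus\{j\}$, $k=j+1$, $i=\max\big((K\cup\{0\})\cap[0,j-1]\big)$ can create another one; for example, it gives $f_{\{1,3,4\}}=f_{\{1,2,4\}}$. Instead, induct lexicographically on $(|S|,\sum_{s\in S}s)$, and remove $d$ using the relation with $k=d+1$. Every new term is then smaller, and what survives are the $F_{d+1}$ sparse subsets of $[d-1]$.
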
  

To define the $\g{cd}$-index for semi-Eulerian posets, we need to consider the following modification of the flag $f$-vector, given by Juhnke-Kubitzke, Samper and Venturello \cite{semi}, that generalize both the \g{ab}-polynomial and the chain polynomial. Furthermore, it coincides with the classical definitions in the Eulerian case and enables the definition of a \g{cd}-index for semi-Eulerian posets. This modification relies on a small shift using the Euler characteristic of $\Omega$, denoted as $\chi(\Omega)$. Additionally, we use $\mathbb{S}^{d-1}$ to denote the $(d-1)$-dimensional sphere.

\begin{definition}\label{def:semicd} Let $\Omega$ be a graded poset of rank $d+1$ with unique minimal and maximal elements $\hat 0$ and $\hat 1$. For $K\subseteq [d]$, let 

 $$f'_K(\Omega)=\begin{cases}
f_{\{d\}}(\Omega) + \left( \chi(\mathbb{S}^{d-1}) - \chi(\Omega) \right), & \text{if } K = \{d\}, \\
f_K(\Omega), & \text{otherwise}.
\end{cases}$$
The polynomial $
\chi'_\Omega(\g{a}, \g{b}) = \sum_{K \subseteq [d]} f_K'(\Omega) \, u_K$ is called the \textit{modified chain polynomial}. 
\end{definition}

\begin{theorem}[\cite{semi}]\label{th:semicd} Let $\Omega$ be a semi-Eulerian poset of rank $d+1$, The modified chain polynomial $\chi'_\Omega(\g{a},\g{b})$ has a \g{cd}-index, i.e, there exists a polynomial $\Phi_\Omega(\g{c},\g{d})$ in non-commuting variables $\g{c}=\g{a}+\g{b}$ and $\g{d}=\g{ab}+\g{ba}$ such that 
$$\Phi_\Omega(\g{a}+\g{b},\g{ab}+\g{ba})=\chi_\Omega'(\g{a}-\g{b},\g{b}).$$
\end{theorem}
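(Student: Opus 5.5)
By Theorem~\ref{thm: BayerKlapper} (Bayer--Klapper), it suffices to show that the modified flag vector $(f'_K(\Omega))_{K\subseteq[d]}$ satisfies every generalized Dehn--Sommerville equation. Indeed, $\chi'_\Omega(\g{a}-\g{b},\g{b})$ is obtained from $(f'_K)$ by the same invertible flag $f\to h$ transformation used for $\Upsilon$ and $\Psi$. The characterization ``lies in the image of $\Phi$ iff the Dehn--Sommerville relations hold'' is a statement about the coefficient vector and does not use that it comes from a poset.

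\emph{Step 1: the relations that only see proper intervals.} I would take an equation indexed by $K$ and $i<k$ consecutive in $K\cup\{0,d+1\}$, with $k-i\ge 2$. Grouping the chains counted on the left by their elements $x_i<x_k$ of ranks $i,k$, the alternating sum $\sum_j(-1)^{j-i-1}f_{K\cup\{j\}}$ becomes a sum over such pairs of the reduced Euler characteristic of the open interval $(x_i,x_k)$. If $[x_i,x_k]\neq[\hat0,\hat1]$, semi-Eulerianity gives $\mu=(-1)^{k-i}$, so that interval contributes exactly $1-(-1)^{k-i-1}$. Hence for the unmodified $f$, every equation with $(i,k)\neq(0,d+1)$ holds.

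\emph{Step 2: the relations touched by the modification.} Since $f'_K$ differs from $f_K$ only at $K=\{d\}$, I would list the equations in which $f_{\{d\}}$ appears.
\begin{enumerate}[(a)]
\item $K=\varnothing$, $(i,k)=(0,d+1)$. This is the only equation involving the whole interval. Its left side with $f$ is $\sum_{j=1}^d(-1)^{j-1}f_{\{j\}}=\chi(\Omega)$, and the right side is $1-(-1)^d=\chi(\mathbb S^{d-1})$. Replacing $f_{\{d\}}$ by $f'_{\{d\}}$ adds $(-1)^{d-1}(\chi(\mathbb S^{d-1})-\chi(\Omega))$.
\item $K=\{d\}$, $(i,k)=(0,d)$. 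The left side involves only $f_{\{j,d\}}$, which is unchanged, and it equals $(1-(-1)^{d-1})f_{\{d\}}$ by Step~1, since it only uses the intervals $[\hat0,x]$ for coatoms $x$. The right side becomes $(1-(-1)^{d-1})f'_{\{d\}}$.
\end{enumerate}
No other equation contains $f_{\{d\}}$. In particular, any equation with $k=d+1$ and $i\geq 1$ has $j$ ranging up to $d$, but then $K\cup\{j\}\ni i$, so it is not $\{d\}$.

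\emph{Step 3: parity split.} If $d$ is odd, then in (a) the new left side is $\chi+(\chi(\mathbb S^{d-1})-\chi)=\chi(\mathbb S^{d-1})$, so (a) now holds. In (b) the factor $1-(-1)^{d-1}$ vanishes, so (b) holds trivially.

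If $d$ is even, the main obstacle is that the sign in (a) is wrong and (b) carries a factor $2$. Both equations hold, however, provided $\chi(\Omega)=\chi(\mathbb S^{d-1})$, since then $f'=f$. This is the Klee-type fact that a semi-Eulerian poset of rank $d+1$ with $d$ even (an odd-dimensional ``manifold'') has the Euler characteristic of a sphere. I would prove it purely from Step~1. Consider the equations with $K=\{m\}$ and $(i,k)=(0,m)$ for the Eulerian lower intervals, together with those for $(i,k)=(m,d+1)$ for the upper intervals $[x,\hat1]$, $x\neq\hat0$. Form the alternating sum over $m$ of these relations. Each $f_{\{j,m\}}$ appears twice with signs that cancel, leaving a linear identity of the form $(1+(-1)^{d})\chi(\Omega)=(1+(-1)^d)\chi(\mathbb S^{d-1})$. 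This is the usual computation showing $\chi=1-(-1)^d$ from local Eulerian conditions, equivalently $\mu(\hat0,\hat1)=(-1)^{d+1}$ is forced when $d$ is even. When $d$ is even the factor is $2\neq0$, which gives the claim and completes the verification of all Dehn--Sommerville equations.
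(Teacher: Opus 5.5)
The paper gives no proof of this theorem; it quotes it from \cite{semi} and uses it as a black box, so there is no in-paper argument to compare with. Your proposal is a correct, self-contained proof.

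The reduction is legitimate. The generalized Dehn--Sommerville relations cut out an $F_{d+1}$-dimensional space spanned by Eulerian flag vectors (Bayer--Billera). By Bayer--Klapper that space lies in the image of $\Phi$, which is itself $F_{d+1}$-dimensional. Hence any vector satisfying the relations, poset-derived or not, has a $\mathbf{cd}$-index.

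Your bookkeeping is also right. Only the relation with $K=\varnothing$, $(i,k)=(0,d+1)$ involves $[\hat0,\hat1]$. Apart from that relation, $f_{\{d\}}$ occurs only on the right-hand side of the relation $K=\{d\}$, $(i,k)=(0,d)$. Its coefficient there is $1-(-1)^{d-1}$, which vanishes exactly when $d$ is odd.

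For $d$ even, the combination that cancels every $f_{\{j,m\}}$ is the lower relations weighted by $(-1)^{m-1}$ plus the upper relations with weight $1$. This yields $0=(1+(-1)^d)\sum_{m}(-1)^{m-1}f_{\{m\}}$. That is the standard fact that a semi-Eulerian poset of odd rank is Eulerian; equivalently, expand $\mu(\hat0,\hat1)$ once from below and once from above.

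Two small precision points, neither of which affects the argument:
\begin{enumerate}
\item In Step~1, the inner quantity $\sum_{y\in(x_i,x_k)}(-1)^{r(y)-r(x_i)-1}$ is the \emph{unreduced} Euler characteristic $1+\mu(x_i,x_k)$ of the open interval, not the reduced one. The outer sum also runs over $K$-chains rather than over pairs $x_i<x_k$. All you actually need is that the proper interval $[x_i,x_k]$ has equally many elements of odd and even rank.
\item If $\chi(\Omega)$ means the Euler characteristic of the order complex, identifying it with $\sum_j(-1)^{j-1}f_{\{j\}}$ uses that every $[\hat0,x]$ with $x\neq\hat1$ is Eulerian.
\end{enumerate}

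Compared with simply citing \cite{semi}, your route explains why the correction sits at $f_{\{d\}}$. For $d$ odd, it is a coordinate that effectively appears only in the single relation that fails. For $d$ even, the correction is identically zero.
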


Eulerian posets will be the focus of Section~\ref{sec:S_part} of this article. Meanwhile, Section~\ref{sec:SE_part} concerns {semi-Eulerian} posets.

\section{S-partitionable Posets}\label{sec:S_part}
In this section, we will define the class of Eulerian \il{S}-partitionable posets, prove that their \g{cd}-index is non-negative and show that they generalize both Stanley's \il{S}-shellable spheres and Eulerian partitionable simplicial complexes. To begin, we provide some examples illustrating the recursive nature of the partition(s) of interest that we utilize in the proof of the non-negativity of the \g{cd}-index of \il{S}-partitionable posets.

\subsection{A Motivating Example}
Before diving into the technical definition of \il{S}-partitionable posets, we begin with a motivating example to demonstrate how the \g{cd}-index shall be computed. We demonstrate this count for the (boundary of) polytope $Q$ in Figure~\ref{fig:Q_ex}.

\tdplotsetmaincoords{70}{61}
\begin{figure}[htbp]
\centering
\begin{minipage}[t]{0.48\textwidth}
\centering
\tdplotsetmaincoords{70}{61}
\begin{tikzpicture}
[tdplot_main_coords,scale=3.9]

\coordinate (A) at (0,0.05,0.03);
\coordinate (B) at (0.7,0,0);
\coordinate (C) at (0.4,0.8,0);


\coordinate (F) at (0.8,0.4,0.65);
\coordinate (G) at (0.2,0.17,0.5);
\coordinate (H) at (0.55,0.17,0.45);

\coordinate (D) at (0.35,0.33,-0.60);

\fill[gray!20] (F)--(G)--(H)--cycle;
\fill[gray!20] (A)--(B)--(H)--(G)--cycle;
\fill[gray!20] (B)--(C)--(F)--(H)--cycle;
\fill[gray!10] (A)--(B)--(D)--cycle;
\fill[gray!20] (B)--(C)--(D)--cycle;
\fill[gray!20] (C)--(A)--(D)--cycle;

\draw[thick] (F)--(G)--(H)--cycle;
\draw[thick] (A)--(B)--(C);
\draw[dashed] (A)--(C);
\draw[thick] (A)--(B)--(H)--(G)--cycle;
\draw[thick] (B)--(C)--(F)--(H)--cycle;
\draw[thick] (A)--(D)--(B);
\draw[thick] (B)--(D)--(C);
\draw[thick] (C)--(D)--(A);
\end{tikzpicture}
\caption{Polytope $Q$.}
\label{fig:Q_ex}
\end{minipage}
\begin{minipage}[t]{0.48\textwidth}
\centering
\begin{tikzpicture}[tdplot_main_coords,scale=3.9]

\coordinate (A) at (0,0.05,0.03);
\coordinate (B) at (0.7,0,0);
\coordinate (C) at (0.4,0.8,0);


\coordinate (F) at (0.8,0.4,0.65);
\coordinate (G) at (0.2,0.17,0.5);
\coordinate (H) at (0.55,0.17,0.45);

\coordinate (D) at (0.35,0.33,-0.60);

\fill[gray!20] (F)--(G)--(H)--cycle;
\fill[gray!20] (A)--(B)--(H)--(G)--cycle;
\fill[gray!20] (B)--(C)--(F)--(H)--cycle;
\fill[gray!10] (A)--(B)--(D)--cycle;
\fill[gray!20] (B)--(C)--(D)--cycle;
\fill[gray!20] (C)--(A)--(D)--cycle;

\draw[thick] (F)--(G)--(H)--cycle;
\draw[thick] (A)--(B)--(C);
\draw[dashed] (A)--(C);
\draw[thick] (A)--(B)--(H)--(G)--cycle;
\draw[thick] (B)--(C)--(F)--(H)--cycle;
\draw[thick] (A)--(D)--(B);
\draw[thick] (B)--(D)--(C);
\draw[thick] (C)--(D)--(A);

\node[font=\large] at (0.4,0.5,0.16) {1};
\node[font=\large] at (0.3,0.12,0.2) {2};
\node[font=\large] at (0.7,0.12,0.6) {3};
\node[font=\large] at (0.9,0.6,0.25) {4};
\node[font=\large] at (0.3,0.85,-0.37) {5};
\node[font=\large] at (0.15,0.55,-0.36) {6};
\node[font=\large] at (0.4,0.11,-0.2) {7};
\draw[->, dashed]  (0.9,0.34,0.47) to[out=0, in=100]  (0.9,0.54,0.3);
\draw[->, dashed]  (0.15,0.7,-0.2) to[out=0, in=100]  (0.15,0.9,-0.36);
\end{tikzpicture}
\caption{Shelling of $Q$.}
\label{fig:Q_shelling}
\end{minipage}
\end{figure}

This polytope $Q$ can be viewed as taking the bipyramid over a triangle and truncating one of the two newly added vertices. In general, any \il{S}-shelling will induce an \il{S}-partition. This implies that for polytopes, one can obtain an \il{S}-partition by taking the partition induced by a shelling. In Figures~\ref{fig:Q_shelling} and ~\ref{fig:partition_Q}, we include diagrams of a shelling of $Q$ and its corresponding partition. Notice that each partition class will contain a unique facet of $Q$. 

\tdplotsetmaincoords{70}{61}
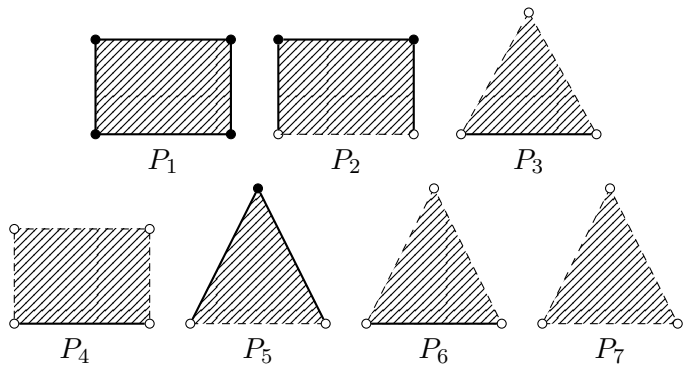
\begin{figure}[htbp]
\centering
\begin{tikzpicture}[scale=1.8]
\tikzset{
  open/.style={circle,draw,fill=white,inner sep=1.2pt},
}
\tikzset{
  open/.style={circle,draw,fill=white,inner sep=1.2pt},
  filled/.style={circle,draw,fill=black,inner sep=1.2pt},
}

\begin{scope}[shift={(-0.5,0)}]
\draw[pattern=north east lines, thick] (1.1,0) rectangle (2.1,0.7);
\node[filled] at (1.1,0) {};
\node[filled] at (2.1,0) {};
\node[filled] at (1.1,0.7) {};
\node[filled] at (2.1,0.7) {};
\node at (1.6,-0.2) {$P_1$};
\end{scope}

\begin{scope}[shift={(0.85,0)}]

\node[open] (2) at (1.1,0) {};
\node[open] (4) at (2.1,0) {};
\node[filled] (1) at (1.1,0.7) {};
\node[filled] (3) at (2.1,0.7) {};
\node at (1.6,-0.2) {$P_2$};

\fill[pattern=north east lines] (1.1,0)--(2.1,0)--(2.1,0.7)--(1.1,0.7)--cycle;

\draw[thick] (1) -- (2);
\draw[dashed] (2) -- (4);
\draw[thick] (4) -- (3);
\draw[thick] (3) -- (1);

\end{scope}

\begin{scope}[shift={(2.2,0)}]
\fill[pattern=north east lines] (1.1,0) -- (2.1,0) -- (1.6,0.9) -- cycle;
\node[open] (A) at (1.1,0) {};
\node[open] (B)at (2.1,0) {};
\node[open] (C) at (1.6,0.9) {};
\node at (1.6,-0.2) {$P_3$};

\draw[thick] (A) -- (B);   
\draw[dashed] (A) -- (C);           
\draw[dashed] (B) -- (C);           
\end{scope}

\begin{scope}[shift={(0,-1.4)}]
\fill[dashed,pattern=north east lines] (0,0) rectangle (1,0.7);
\node[open] (A) at (0,0) {};
\node[open] (B) at (1,0) {};
\node[open] (C) at (1,0.7) {};
\node[open] (D) at (0,0.7) {};
\node at (0.45,-0.2) {$P_4$};

\draw[thick] (A) -- (B);   
\draw[dashed] (A) -- (D);           
\draw[dashed] (B) -- (C);
\draw[dashed] (C) -- (D);
\end{scope}

\begin{scope}[shift={(1.3,-1.4)}]
\draw[dashed,pattern=north east lines] (0,0) -- (1,0) -- (0.5,1) -- cycle;
\node[open] (A) at (0,0) {};
\node[open] (B)at (1,0) {};
\node[filled] (C) at (0.5,1) {};
\draw[thick] (C) -- (B);
\draw[thick] (A) -- (C);
\node at (0.5,-0.2) {$P_5$};
\end{scope}

\begin{scope}[shift={(2.6,-1.4)}]
\draw[dashed,pattern=north east lines] (0,0) -- (1,0) -- (0.5,1) -- cycle;
\node[open] (A) at (0,0) {};
\node[open] (B) at (1,0) {};
\node[open] at (0.5,1) {};
\draw[thick] (A) -- (B);
\node at (0.5,-0.2) {$P_6$};
\end{scope}

\begin{scope}[shift={(3.9,-1.4)}]
\draw[dashed,pattern=north east lines] (0,0) -- (1,0) -- (0.5,1) -- cycle;
\node[open] at (0,0) {};
\node[open] at (1,0) {};
\node[open] at (0.5,1) {};
\node at (0.5,-0.2) {$P_7$};
\end{scope}
\end{tikzpicture}
\caption{\il{S}-partition of $Q$.}
\label{fig:partition_Q}
\end{figure}

To compute the \g{cd}-index of $Q$, we will use the \g{cd}-indices of each complex $\Gamma_{\sigma_i}\coloneq \overline{P_i \setminus \{\sigma_i\}}$ for each partition class $P_i$ and each associated facet $\sigma_i$ as in Figure~\ref{fig: Contribution-classes}. By summing the contributions of each facet, we find that the \g{cd}-index of $Q$ is 
$$\g{c}^3+5\g{cd}+5\g{dc}.$$

Each facet of $Q$ contributes a set of elements of $\Delta(Q)$ in such a way that each facet will contribute a non-negative sum of \g{cd}-words to the \g{cd}-index of $Q$. In general, within the semisuspension $\tilde{\Sigma}\Gamma_{\sigma_i}$, the contribution of the newly added facet $\tau_i$ will be exactly all chains in its closure as well as one $(K \cup \{d\})$-chain for each $K$-chain of $\partial \tau_i$. The \g{cd}-index of any \il{S}-partitionable poset is computed as follows:
\begin{enumerate}
    \item The contribution of $\sigma_1$ will be $\Phi(\partial\overline{\sigma_1})\cdot \g{c}$.
    \item For $2\leq i \leq t-1$ the contribution of $\sigma_i$ will be 
    \begin{enumerate}
        \item $\Phi(\partial \Gamma_{\sigma_i})\cdot \g{d}$.
        \item the contribution(s) of all facets not equal to $\tau_i$ in $\tilde{\Sigma}\Gamma_{\sigma_i}$ times $\g{c}$. 
    \end{enumerate}
    \item The contribution of $\sigma_t$ will be empty. 
\end{enumerate}

\begin{figure}[htbp]
\centering

\begin{tikzpicture}[scale=0.71]

\tikzset{
  open/.style={circle,draw,fill=white,inner sep=1.2pt},
  filled/.style={circle,draw,fill=black,inner sep=1.2pt},
  filledO/.style={circle,draw,fill=orange,inner sep=1.2pt},
  filledY/.style={circle,draw,fill=yellow,inner sep=1.2pt},
  filledP/.style={circle,draw,fill=purple,inner sep=1.2pt},
  filledC/.style={circle,draw,fill=cyan,inner sep=1.2pt}
}

\begin{scope}[shift={(-6,0)}]
\node at (-3.3,1.7) {$\Gamma_{\sigma_1} :$};
\node[filled] (A) at (-1.7,0) {};
\node[filled] (B) at (0.3,0) {};
\node[filled] (S) at (0.3,2) {};
\node[filled] (D) at (-1.7,2) {};

\draw (A) -- (B); 
\draw (B) -- (S);
\draw (S) -- (D);
\draw (A) -- (D);

\node at (1.3,1) {$\Rightarrow$};

\node at (4.75,1) {$\Phi(\square)\cdot \g{c} = \g{c}^3 + 2\g{dc}$,};
\end{scope}

\begin{scope}[scale=1, shift={(-6,0)}]
\node at (-3.3,-1.2) {$\textcolor{purple}{\tilde{\Sigma}}\Gamma_{\sigma_2}:$};
\draw[thick,purple] (-1.7,-2.7) to[out=330,in=210] (0.3,-2.7);
\node[filled] (A) at (-1.7,-2.7) {};
\node[filled] (B) at (-1.7,-1) {};
\node[filled] (T) at (0.3,-1) {};
\node[filled] (D) at (0.3,-2.7) {};
\draw (A) -- (B); 
\draw (B) -- (T);
\draw (T) -- (D);
\node at (-0.7,-3.3) {\textcolor{purple}{$\tau_2$}};
\node at (1.3,-2) {$\Rightarrow$}; 
\draw[thick,purple] (2.2,-2.7) to[out=330,in=210] (4.2,-2.7);
\node[filledP] (E) at (2.2,-2.7) {};
\node[filledP] (F) at (4.2,-2.7) {};
\node[filledC] (G) at (2.2,-1) {};
\node[filledC] (H) at (4.2,-1) {};
\node[filledP] (I) at (2.2,-2) {};
\node[filledC] (J) at (4.2,-2) {};
\node[filledC] (K)at (3.2,-1) {};
\node[filledP] (L) at (3.2,-3) {};
\draw[purple] (E) -- (I);
\draw[cyan] (I) -- (G);
\draw[purple] (F) -- (J);
\draw[cyan] (J) -- (H);
\draw[cyan] (G) -- (K);
\draw[cyan] (K) -- (H);
\node[cyan,right] at (4.4,-1.8) {Contribution of rest};
\node[purple,right] at (4.4, -3.3) {Contribution of $\tau_2$};

\node at (12.9,-1.5) {$\Phi(\partial\Gamma_{\sigma_2})\cdot \g{d} = \g{cd}$,};
\node at (13.3,-2.5) {$\Phi(\textcolor{cyan}{\square})\cdot \g{c}=2\g{dc}$,};
   
\end{scope}

\begin{scope}[shift={(-6,0.229)}]
 
\node at (-1.5,-4.22) {$\textcolor{purple}{\tilde{\Sigma}}\Gamma_{\sigma_3} =\textcolor{purple}{\tilde{\Sigma}}\Gamma_{\sigma_4}=\textcolor{purple}{\tilde{\Sigma}}\Gamma_{\sigma_6}:$};

  \node at (-0.7,-6.4) {\textcolor{purple}{$\tau_i$}};

  \draw (-1.7,-5)--(0.3,-5); 
  \draw[purple] (-1.7,-5) arc[start angle=-180,end angle=0,radius=1] -- (0.3,-5);
  \node[filled] (A) at (-1.7,-5) {};
  \node[filled] (B) at (0.3,-5) {};
  \node at (1.3,-5.1) {$\Rightarrow$};
\end{scope}

\begin{scope}[shift={(-6,0.229)}]

  \draw[purple] (2.2,-5)-- (4.2,-5); 
  \draw[purple] (2.2,-5) arc[start angle=-180,end angle=0,radius=1] -- (4.2,-5);
   \node[filledP] (A) at (2.2,-5) {};
  \node[filledP] (B) at (4.2,-5) {};
  \node[filledP] (C) at (3.2,-6) {}; 
  \node[filledP]  at (3.2,-5) {};
  \node[purple,right] at (4.4,-5.7) {Contribution of $\tau_i$};
  \node at (12.9,-5) {$\Phi(\partial\Gamma_{\sigma_i})\cdot \g{d} = \g{cd}$,};
\end{scope}


\begin{scope}[shift={(-6,1.3)}]
\node at (-3.3,-8.2) {$\textcolor{purple}{\tilde{\Sigma}}\Gamma_{\sigma_5}:$};
\draw[thick,purple] (-1.7,-10) to[out=330,in=210] (0.3,-10);
 \node[filled] (L) at (-1.7,-10) {};
 \node[filled] (M) at (0.3,-10) {};
 \node[filled] (N) at (-0.7,-8) {};
\draw (N) -- (M);
 \draw (L) -- (N);
\node at (-0.7,-10.7) {\textcolor{purple}{$\tau_5$}};
\node at (1.3,-9) {$\Rightarrow$};
\end{scope}

\begin{scope}[shift={(-5.6,1.3)}]

\draw[thick,purple] (1.8,-10) to[out=330,in=210] (3.8,-10);
 \node[filledP] (L) at (1.8,-10) {};
 \node[filledP] (M) at (3.8,-10) {};
 \node[filledC] (N) at (2.8,-8) {};
 \node[filledP] (Ñ) at (2.8,-10.3) {};
 \node[filledC] (O) at (3.3,-9) {};
 \node[filledP] (P) at (2.3,-9) {};

 \draw[purple] (M) -- (O);
 \draw[cyan] (O) -- (N);
 \draw[cyan] (N)-- (P);
 \draw[purple] (P) -- (L);

 \node[purple] at (6.6,-10.2) {Contribution of $\tau_5$};
 \node[cyan] at (6.8,-8.8) {Contribution of rest};
 \node at (12.5,-8.5) {$\Phi(\partial\Gamma_{\sigma_5})\cdot \g{d} = \g{cd}$,};
\node at (12.75,-9.5) {$\Phi(\textcolor{cyan}{\square})\cdot \g{c}=\g{dc}$,};
\end{scope}

\begin{scope}[shift={(-5.8,-3.5)}]
\node at (-3.2,-7) {$\Gamma_{\sigma_7} =\emptyset$};
\node at (1.1,-7) {$\Rightarrow$};
\node at (7.1,-7) {No contribution to the \g{cd}-index of $Q$.};
\end{scope}
\end{tikzpicture}
\caption{Contribution of the \il{S}-partition classes of $Q$.}
\label{fig: Contribution-classes}
\end{figure}
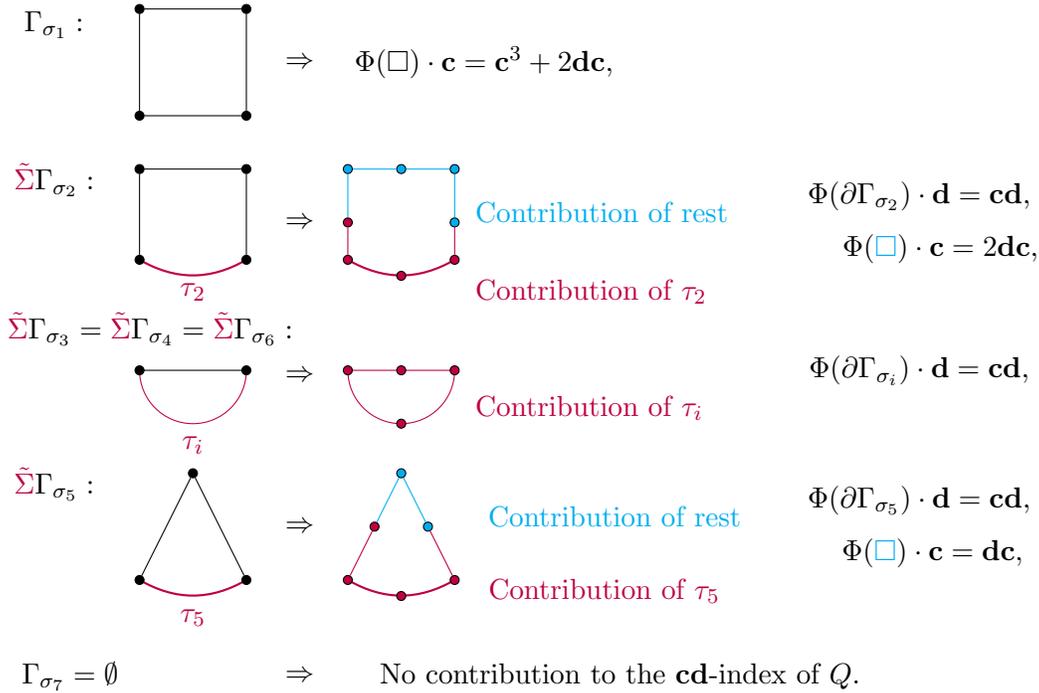

We take a moment to provide a more in-depth description of the computation of (2b) for the complex $\tilde{\Sigma}\Gamma_{\sigma_2}$. The boundary of $\tau_2$ consists of two vertices. Therefore, $\Delta(\partial \tau_2)$ consists of the empty chain and two vertices acting as singleton chains. Hence, outside of the closure of $\tau_i$, its contribution also consists of a chain consisting of a single edge as well as two vertex-edge chains. These are illustrated by the two red edges and one red vertex outside of $\Delta(\overline{\tau_i})$ in $\Delta(\tilde{\Sigma}\Gamma_{\sigma_2})$. The remaining faces are colored blue and these blue faces provide a chain polynomial of $2\g{ba}+2\g{ab}+4\g{bb}$. Converting this to the \g{ab}-polynomial, we obtain $2\g{b}(\g{a-b})+2(\g{a-b})\g{b}+4\g{bb}=2(\g{ab}+\g{ba})$. In turn, this gives us a \g{cd}-index of $2\g{d}$ from the contribution of the blue faces of the order complex of $\tilde{\Sigma}\Gamma_{\sigma_2}$.

\subsection{S-partitionable Definition}
We now  define (Eulerian) \il{S}-partitionable posets. While the definition is highly inductive, such decomposition properties are common in the literature as illustrated in Subsections~2.3 and 2.4 of Hachimori~\cite{Hachimori_2000}.

\begin{definition}\label{def:S_part}
Let $\Omega$ be an Eulerian poset of rank $d+1$. We say that $\Omega$ is \textit{\il{S}-partitionable} if either $\Omega=\{\emptyset\}$ or $\Omega$ has rank at least 2 and there exists a partition of the elements of $\Omega$ with a partition class $P_\sigma \subseteq \overline{\sigma}$ for each coatom $\sigma$ of $\Omega$. Furthermore, for $\Gamma_\sigma \coloneq \overline{P_\sigma \setminus \{\sigma\}}$ we require:
        \begin{enumerate}[(a)]
            \item There is a unique \textit{initial} coatom $\sigma_1$ for which $P_{\sigma_1}=\overline{\sigma_1}$ and we have that $P_{\sigma_1}\setminus \{\sigma_1\}=\partial \overline{\sigma_1}$ is \il{S}-partitionable of rank $d$.
            \item There is a unique \textit{terminal} coatom $\sigma_t$ for which $P_{\sigma_t}= \{\sigma_t\}$ is a one element set. 
            \item For every other coatom $\sigma$, $\Gamma_\sigma$ is near-Eulerian of rank $d$ with $\Gamma_\sigma = (P_\sigma \setminus \{\sigma\}) \uplus \partial \Gamma_\sigma$, and $\tilde{\Sigma}\Gamma_\sigma$ is \il{S}-partitionable of rank $d$ such that the initial coatom is the added coatom $\tau_\sigma$.
        \end{enumerate}
\end{definition}
We will say that the coatoms and partition classes described in part (c) of Definition~\ref{def:S_part} are \textit{ordinary}. For every ordinary partition class, there will be two associated \il{S}-partitionable posets of smaller rank.
\begin{remark}\label{rem:boundary_is_S_part}
    For every ordinary coatom $\sigma$ of an \il{S}-partition, the poset $\partial \Gamma_\sigma$ is \il{S}-partitionable of rank $d-1$ because of property $(a)$ of Definition~\ref{def:S_part} applied to $\tilde{\Sigma}\Gamma_\sigma$.
\end{remark}

\begin{remark}\label{rem:facets_part}
    Observe that because $P_\sigma \subseteq \overline{\sigma}$, we find that $\sigma \in P_\sigma$ for each coatom $\sigma$ of $\Omega$. This follows from the fact that $\sigma \in \overline{\sigma'}$ if and only if $\sigma=\sigma'$. 
\end{remark}

\subsection{S-partitions and the \g{cd}-index}

We will prove that \il{S}-partitionable posets have a non-negative \g{cd}-index by constructing a partition of the chains of $\Omega$ in a way that is inspired by Lee~\cite{Lee2010}. Our construction will rely on two inductive properties that will be applied to certain collections of chains that we call pre-blocks. Each pre-block will contain a unique chain that does not contain a coatom of $\Omega$. Our recursion will satisfy the following two inductive properties:

 \begin{enumerate}
    \item[(P1)]  For $d\geq 0$, every chain $G$ not containing a coatom of $\Omega$, will be associated to two chains $\beta(G)$ and $\tau(G)$, called the bottom and the top chain respectively, whose rank set is $r(G)\cup \{d\}$.
     \item[(P2)]  All pre-blocks associated to a coatom $\sigma$ of $\Omega$ will contribute a (possibly empty) collection of non-negative \textbf{cd}-words to the \textbf{cd}-index of $\Omega$. 
 \end{enumerate}
 
Analogous to Lee~\cite{Lee2010}, our pre-blocks will consist of either three or four chains of $\Omega$, containing $G, \beta(G)$, $\tau(G)$ and possibly an additional middle chain. We will then merge pre-blocks together in a way that gives a recursion for the \g{cd}-index in terms of \il{S}-partitionable posets of smaller rank, which will imply the non-negativity of the $\g{cd}$-index of $\Omega$. Figure~\ref{fig:preblocks} illustrates these pre-blocks for a possible part in some \il{S}-partition.

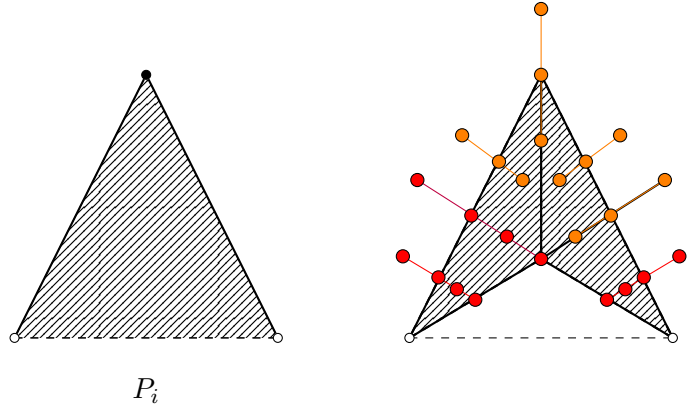
\begin{figure}[h]
    \centering
    \begin{tikzpicture}[scale=3.5]
        \tikzset{
  open/.style={circle,draw,fill=white,inner sep=1.2pt},
}
\tikzset{
  open/.style={circle,draw,fill=white,inner sep=1.2pt},
  filled/.style={circle,draw,fill=black,inner sep=1.2pt},
}
\begin{scope}   
\draw[dashed,pattern=north east lines] (0,0) -- (1,0) -- (0.5,1) -- cycle;
\draw[dashed] (0,0)--(1,0);
\node[open] (A) at (0,0) {};
\node[open] (B)at (1,0) {};
\node[filled] (C) at (0.5,1) {};
\draw[thick] (C) -- (B);
\draw[thick] (A) -- (C);
\node at (0.5,-0.2) {$P_i$};
\end{scope}

\begin{scope}[shift={(1.5,0)}]

\draw[dashed] (0,0)--(1,0);
\coordinate (A) at (0,0) {};
\coordinate (B) at (1,0) {};
\node[filled] (C) at (0.5,1) {};
\coordinate (D) at (0.5,0.3);
\coordinate (E) at (0.25,0.145);
\coordinate (F) at (0.75,0.145);
\coordinate (F') at (1.025,0.31);
\coordinate (E') at (-0.025,0.31);
\coordinate (D') at (0.03,0.6);
\coordinate (G) at (0.43,0.6);
\coordinate (H) at (0.57,0.6);
\coordinate (G') at (0.2,0.77);
\coordinate (H') at (0.8,0.77);
\coordinate (I) at (0.5,0.75);
\coordinate (I') at (0.5,1.25);
\coordinate (D'') at (0.97,0.6);
\coordinate (W) at (0.63,0.385);
\draw[dashed,pattern=north east lines,thick] (0,0) -- (D) -- (0.5,1) -- cycle;
\draw[dashed,pattern=north east lines,thick] (1,0) -- (D) -- (0.5,1) -- cycle;

\draw[thick] (D)--(D'');
\draw[thick] (C) -- (B);
\draw[thick] (A) -- (C);
\draw[thick] (A)--(D)--(B);
\draw[thick] (D)--(C);

\draw[red] (E)--(E');
\draw[red] (F)--(F');
\draw[purple] (D)--(D');
\draw[orange] (G)--(G');
\draw[orange] (H)--(H');
\draw[orange] (I)--(I');
\draw[orange] (W)--(D'');

\node at (0.5,-0.2) {};
\filldraw[fill=red] (D) circle (0.025);
\filldraw[fill=red] (D') circle (0.025);
\filldraw[fill=red] (E) circle (0.025);
\filldraw[fill=red] (F) circle (0.025);
\filldraw[fill=red] (F') circle (0.025);
\filldraw[fill=red] (E') circle (0.025);
\filldraw[fill=orange] (G) circle (0.025);
\filldraw[fill=orange] (H) circle (0.025);
\filldraw[fill=orange] (G') circle (0.025);
\filldraw[fill=orange] (H') circle (0.025);
\filldraw[fill=orange] (I) circle (0.025);
\filldraw[fill=orange] (I') circle (0.025);
\filldraw[fill=orange] (D'') circle (0.025);
\filldraw[fill=orange] (W) circle (0.025);
\filldraw[fill=white] (A) circle (0.0175);
\filldraw[fill=white] (B) circle (0.0175);

\filldraw[fill=red] (0.11,0.23) circle (0.025);
\filldraw[fill=red] (0.89,0.23) circle (0.025);
\filldraw[fill=red] (0.82,0.185) circle (0.025);
\filldraw[fill=red] (0.18,0.185) circle (0.025);
\filldraw[fill=orange] (0.34,0.67) circle (0.025);
\filldraw[fill=orange] (0.67,0.67) circle (0.025);
\filldraw[fill=orange] (0.5,1) circle (0.025);

\filldraw[fill=red] (0.37,0.385) circle (0.025);
\filldraw[fill=red] (0.235,0.465) circle (0.025);
\filldraw[fill=orange] (0.765,0.465) circle (0.025);

\end{scope}
\end{tikzpicture}
    \caption{A partition class and its pre-blocks,}
    \label{fig:preblocks}
\end{figure}
Observe that each face of the barycentric subdivision represents a chain of $\Omega$. Therefore, this illustration highlights our pre-blocks of size $3$ or $4$ and their associated chains. Notice that the bottommost element of each pre-block of size $4$ in Figure~\ref{fig:preblocks} represents a chain whose deletion of the coatom $\sigma$ is a chain in $\partial \Gamma_\sigma$. This will be how we acquire $\Phi(\partial\Gamma_\sigma)\cdot \g{d}$ in the computation. Observe that each pre-block contains a unique chain $G$ that does not contain the coatom $\sigma$. Furthermore, notice how there is a single chain that lies outside of the coatom $\sigma$ for each pre-block; this will represent the top face of $G$ and will be acquired from our placeholders and the generalized Dehn--Sommerville equations. 

Although this construction is similar to that of Lee~\cite{Lee2010}, there are substantial differences. First, the chain $\tau(G)$ is not explicit in our construction. Instead, we take advantage of the fact that to determine the flag $f$-vector of an Eulerian poset of rank $d+1$, it is enough to compute its values for all subsets of $[d-1]$ by virtue of the generalized Dehn--Sommerville equations. Thus, we do not explicitly assign $\tau(G)$, but rely on established numerical information to complete the partition. The key insight is that our pre-blocks are in bijection with the chains of $\Omega$ whose rank set does not contain $d$, so our partition contains enough information to determine the \g{cd}-index. In some cases, which include all \il{S}-shellable spheres, we can make $\tau(G)$ explicit, as discussed in Remark~\ref{rem:rev_part}. Additionally, our construction is much more general than Lee~\cite{Lee2010}. We do not require convexity, shellability, dualization, nor that $\Omega$ is a lattice.

Before embarking on the proof, we summarize it in a sequence of five steps. 
\begin{enumerate}
    \item[] \textbf{Step 1:} Verify the claims for the base cases of $\Omega=\{\emptyset\}$ and for $\Omega$ equal to a diamond poset with two elements of rank one.
    \item[] \textbf{Step 2:} Form pre-blocks of size $3$ given by $\{\beta(G),G,r(G)\cup \{d\}\}$ consisting of $G$, a chain of $\Omega$ with $d\not \in r(G)$, $\beta(G)$, an extension of $G$ induced by the \il{S}-partition, and a placeholder for an $(r(G)\cup \{d\})$-chain of $\Omega$.
    \item[] \textbf{Step 3:} Turn some of the existing pre-blocks of size $3$ into pre-blocks of size $4$ by adding a new chain of $\Omega$ to an existing pre-block. Property (P1) will be used here to ensure that this assignment is well-defined and does not create pre-blocks of size larger than $4$.
    \item[] \textbf{Step 4:} Argue why the generalized Dehn--Sommerville equations guarantee that the existence of any \g{cd}-index of our pre-blocks must match the \g{cd}-index of $\Omega$.
    \item[] \textbf{Step 5:} Recursively compute a non-negative \g{cd}-index of our pre-blocks. Property (P2) will be used here to ensure non-negativity.
    \end{enumerate}
With these steps in mind, we begin the proof of the non-negativity of the \g{cd}-index for all \il{S}-partitionable posets. 
\begin{theorem}\label{Th.S-Partitionable}
           The \g{cd}-index of an \il{S}-partitionable poset can be recursively computed in terms of its \il{S}-partitionable parts of smaller rank and is consequently non-negative. 
\end{theorem}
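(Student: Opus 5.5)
We argue by induction on the rank $d+1$ of $\Omega$ and aim for the explicit recursion
\[
\Phi(\Omega)=\Phi(\partial\overline{\sigma_1})\cdot\g{c}+\sum_{\sigma \text{ ordinary}}\Bigl(\Phi(\partial\Gamma_\sigma)\cdot\g{d}+\bigl(\Phi(\tilde{\Sigma}\Gamma_\sigma)-\Phi(\partial\Gamma_\sigma)\cdot\g{c}\bigr)\cdot\g{c}\Bigr),
\]
read off from the motivating example. The terminal coatom contributes nothing. The bracket $\Phi(\tilde{\Sigma}\Gamma_\sigma)-\Phi(\partial\Gamma_\sigma)\cdot\g{c}$ is the total contribution of all coatoms of $\tilde{\Sigma}\Gamma_\sigma$ other than its initial coatom $\tau_\sigma$. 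Applying the same recursion to $\tilde{\Sigma}\Gamma_\sigma$, which is \il{S}-partitionable of rank $d$ with initial coatom $\tau_\sigma$, this bracket is a sum of terms of the same shape, so it is non-negative by induction. The base cases are $\{\emptyset\}$ and the rank-$2$ diamond, where $\Phi=\g{c}$. By Remark~\ref{rem:boundary_is_S_part}, $\partial\overline{\sigma_1}$ and every $\partial\Gamma_\sigma$ are \il{S}-partitionable of smaller rank, so every term on the right is non-negative and non-negativity of $\Phi(\Omega)$ follows.

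To prove the recursion, we first note that the flag $f$-vector of an Eulerian poset is determined by its values $f_K$ with $d\notin K$, by the generalized Dehn--Sommerville equations with $k=d+1$. Equivalently, a \g{cd}-polynomial of degree $d$ is determined by the coefficients of $\Upsilon$ on words $u_K$ with $d\notin K$. It therefore suffices to show that the right-hand side, which is a \g{cd}-polynomial, reproduces $f_K(\Omega)$ for all $K\subseteq[d-1]$.

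A chain $G$ of $\Omega$ with $d\notin r(G)$ has top element below $\hat 1$, and this element lies in a unique class $P_\sigma$. We assign $G$ to the class of its top element, or of $\hat 0$ when $G$ is empty; the unique element of rank $0$ is placed in $P_{\sigma_1}$. We then count the chains assigned to each coatom by rank set $K\subseteq[d-1]$:
\begin{itemize}
\item[(i)] For $\sigma_1$, the assigned chains are exactly the chains of $\partial\overline{\sigma_1}$, which are counted by the coefficients of $\Phi(\partial\overline{\sigma_1})$ evaluated with trailing letter $\g{a}$ (no rank $d$).
\item[(ii)] For an ordinary $\sigma$, the assigned chains are those of $\Gamma_\sigma$ whose top element lies in $P_\sigma\setminus\{\sigma\}$, i.e.\ not in $\partial\Gamma_\sigma$ (using $\Gamma_\sigma=(P_\sigma\setminus\{\sigma\})\uplus\partial\Gamma_\sigma$). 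These are the chains of $\Gamma_\sigma$ minus the chains of $\partial\Gamma_\sigma$.
\end{itemize}

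Next, we compare with the claimed contributions by expanding $\g{c}=\g{a}+\g{b}$ and $\g{d}=\g{ab}+\g{ba}$ and reading off the coefficients on words ending in $\g{a}$ in the $f$-basis. The step is to check that $\Phi(\partial\Gamma_\sigma)\g{d}+(\Phi(\tilde\Sigma\Gamma_\sigma)-\Phi(\partial\Gamma_\sigma)\g{c})\g{c}$, restricted to $K\subseteq[d-1]$, equals $\Upsilon(\Gamma_\sigma)-\Upsilon(\partial\Gamma_\sigma)$ there. This is where the pre-block bookkeeping (P1)--(P2) enters. For each chain $G$ we form the pre-block $\{G,\beta(G),\text{placeholder}\}$, and we enlarge it to size $4$ when $G$ minus the coatom lies in $\partial\Gamma_\sigma$; these size-$4$ pre-blocks produce the $\g{d}$ term. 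The placeholder is justified by Dehn--Sommerville. The identity then reduces to the chain relation $\Upsilon(\tilde\Sigma\Gamma_\sigma)=\Upsilon(\Gamma_\sigma)+\Upsilon(\partial\Gamma_\sigma)\cdot\g{b}+(\text{chains through }\tau_\sigma)$ together with the fact that both sides are \g{cd}-polynomials. Summing over all coatoms gives $f_K(\Omega)$ for $K\subseteq[d-1]$, since every chain is counted exactly once by the partition, and uniqueness then gives the recursion.

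The main obstacle is the last-letter bookkeeping. We must verify that multiplying by $\g{c}$ or $\g{d}$ and then discarding words that contain rank $d$ yields exactly the chains with top element in $P_\sigma$. The subtle point is the $\g{d}$ term: it has to account for the chains that touch the boundary $\partial\Gamma_\sigma$ but are topped by elements of $P_\sigma$. Our first attempt at this step will be a direct comparison of flag $h$-vectors restricted to $d\notin K$, since in the $h$-basis the trailing $\g{a}$ simply forgets the last letter.
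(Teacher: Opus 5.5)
Your recursion is correct: it coincides with the paper's formula, since the initial coatom $\tau_\sigma$ of $\tilde{\Sigma}\Gamma_\sigma$ contributes $\Phi(\partial\Gamma_\sigma)\mathbf{c}$, so your bracket equals $\sum_{\omega}\mathcal{C}_{\tilde{\Sigma}\Gamma_\sigma}(\omega)$. Your strategy is also sound: match the flag numbers $f_K$ with $d\notin K$, then invoke uniqueness of the $\mathbf{cd}$-polynomial. But the proposal stops exactly at the decisive step. You never check that each coatom's term reproduces the chains assigned to it; you call this ``the main obstacle.'' The one identity you offer for it is also wrong. The chains of $\tilde{\Sigma}\Gamma_\sigma$ through $\tau_\sigma$ are precisely $\{\tau_\sigma\}\cup C$ with $C$ a chain of $\partial\Gamma_\sigma$, so
\[
\Upsilon_{\tilde{\Sigma}\Gamma_\sigma}=\Upsilon_{\Gamma_\sigma}+\Upsilon_{\partial\Gamma_\sigma}\mathbf{b},
\]
with no further term. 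As written, your relation counts these chains twice and would produce a false identity. The appeal to pre-blocks, (P1) and placeholders is also superfluous inside your framework.

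The missing computation is short. Put $\Gamma=\Gamma_\sigma$ and $B=\partial\Gamma_\sigma$, and let $\Upsilon_\Gamma$ record chains of $\Gamma$ as words of length $d-1$. If $\Upsilon_\Omega=X\mathbf{a}+Y\mathbf{b}$, then the words of $\Psi_\Omega$ ending in $\mathbf{a}$ are exactly $X(\mathbf{a}-\mathbf{b},\mathbf{b})\,\mathbf{a}$. Your item (ii) says $\sigma$ contributes $X_\sigma=\Upsilon_\Gamma-\Upsilon_B\mathbf{a}$ to $X$. By the corrected relation,
\[
X_\sigma(\mathbf{a}-\mathbf{b},\mathbf{b})=(\Psi_{\tilde{\Sigma}\Gamma}-\Psi_B\mathbf{b})-\Psi_B(\mathbf{a}-\mathbf{b})=\Psi_{\tilde{\Sigma}\Gamma}-\Psi_B\mathbf{a}.
\]
Meanwhile, the words ending in $\mathbf{a}$ in the $\mathbf{ab}$-expansion of $\Phi(B)\mathbf{d}+(\Phi(\tilde{\Sigma}\Gamma)-\Phi(B)\mathbf{c})\mathbf{c}$ are
\[
\Psi_B\mathbf{ba}+\Psi_{\tilde{\Sigma}\Gamma}\mathbf{a}-\Psi_B(\mathbf{a}+\mathbf{b})\mathbf{a}=(\Psi_{\tilde{\Sigma}\Gamma}-\Psi_B\mathbf{a})\mathbf{a}.
\]
These agree. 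The coatom $\sigma_1$ gives $\Psi_{\partial\overline{\sigma_1}}\mathbf{a}$ on both sides and $\sigma_t$ gives $0$, so uniqueness yields the recursion.

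The $\mathbf{d}$ term plays no subtle role in this check. Its $\mathbf{ba}$ part just cancels the $\mathbf{ba}$ part of $-\Phi(B)\mathbf{c}^2$. Its real content lives in rank sets containing $d$, which uniqueness handles for you. For non-negativity, take as induction hypothesis that $\Phi(\Omega)-\Phi(\partial\overline{\sigma_1})\mathbf{c}$ is non-negative (the paper's (P2)); your bracket argument then propagates it.

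Once completed, your route is more economical than the paper's. The paper builds size-$3$ and size-$4$ pre-blocks, places each middle chain via the top chain that (P1) supplies in $\tilde{\Sigma}\Gamma_\sigma$, and redeems placeholders through Dehn--Sommerville. What that bookkeeping buys is an actual, partly non-explicit, partition of $\Delta(\Omega)$ into blocks attached to coatoms, each contributing non-negative $\mathbf{cd}$-words. That partition underlies the single-word refinement of Corollary~\ref{cor:single_word} and the combinatorial-interpretation goal, whereas your argument yields only the numerical recursion.
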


\begin{proof}
    Let $\Omega$ be an \il{S}-partitionable poset of rank $d+1$. If $d=0$, then $\Delta(\Omega)$ is decomposed into a single block $\emptyset$ that has a \textbf{cd}-index equal to 1. If $d=1$, then $\Omega$ is a diamond poset with two elements of rank $1$ given by $\sigma_1,\sigma_2$, because it is Eulerian. It has two partition classes $[\hat 0, \sigma_1]$ and $\{\sigma_2\}$. In the order complex, there are three chains $\emptyset$, $\{\sigma_1\}$ and $\{\sigma_2\}$. Here, $\{\sigma_1\}$ is the bottom chain of the empty chain, while $\{\sigma_2\}$ is the top chain of the empty chain. This yields a single block $\{\sigma_1, \emptyset, \sigma_2\}$. These three chains together contribute the \textbf{cd}-word \g{c} and we say that $\sigma_1$ contributes this word to the \textbf{cd}-index of $\Omega$. 
    
    Assume $d\geq 2$ and that properties (P1) and (P2) hold for all \il{S}-partitionable posets of rank at most $d$.
    \medskip
    \newline
    \textbf{Size $3$ pre-blocks:} Let $G$ be an element of $\Delta(\Omega)$ such that $d\not \in r(G)$. Consider the element of $G$ with the largest rank in $\Omega$. This element belongs to a unique partition class $P_\sigma$ of our \il{S}-partition. Define $\beta(G)$, the bottom chain of $G$, to be the chain (of length one longer) obtained by appending $\sigma$ to $G$. Observe that $\beta(G)=\beta(G')$ implies $G=G'$. The pre-block of size $3$ associated to $G$ will contain  $\beta(G),G$ and a placeholder for an $(r(G)\cup \{d\})$-chain of $\Omega$. One should think of this placeholder as a ``token" for an $(r(G) \cup \{d\})$-chain of $\Omega$ that will be redeemed at a later step. This concludes our formation of pre-blocks of size $3$ that are all of the form:
   $$\{\beta(G),G,r(G)\cup \{d\}\}_{G:d\not \in r(G)}.$$
   \medskip
   \newline
   \textbf{Size $4$ pre-blocks:} Let $F$ be an element of $\Delta(\Omega)$ that satisfies $d\in r(F)$, and upon removing the unique coatom $\sigma$ contained in $F$ to create the chain $\hat{F}$, we have that $\hat{F}\in \Delta(\partial\Gamma_\sigma)$. The chains of this form are called \textit{middle chains of $\sigma$}. Notice that since $\partial \Gamma_\sigma$ has rank $d-1$, it must be that $d-1\not \in r(\hat{F})$. By the definition of \il{S}-partitionability, the poset, $\tilde{\Sigma}\Gamma_\sigma$, is \il{S}-partitionable of rank $d$ with the newly added coatom $\tau_\sigma$ corresponding to the initial partition class. Furthermore $\partial \tau_\sigma=\partial \Gamma_\sigma$. By property (P1), we find that $\hat{F}$ has an associated top chain $\tilde{F}$ in $\tilde{\Sigma}\Gamma_\sigma$ with rank set $r(\hat{F})\cup \{d-1\}$. Notice that this top chain cannot contain $\tau_\sigma$, because all chains containing $\tau_\sigma$ are bottom chains of some chain in $\partial \Gamma_\sigma$. Thus, by definition of $\tilde{\Sigma}\Gamma_\sigma$, the top chain of $\hat{F}$ in $\tilde{\Sigma}\Gamma_\sigma$ contains an element of rank $d-1$ of $\Omega$ belonging to $P_\sigma$ as these are the only remaining elements of rank $d-1$ of $\tilde{\Sigma}\Gamma_\sigma$. Insert $F$ into the pre-block of size $3$ of $\Omega$ containing $\tilde{F}$ to create a pre-block of size $4$ of the form:
   $$\{F,\beta(G),G=\tilde{F},r(G)\cup \{d\}\}.$$
   
   Next, we show that these pre-blocks truly are of size four in that no two middle chains are placed into the same pre-block of size $3$. Assume, for the sake of contradiction, that $F_1$ and $F_2$ are placed into the same pre-block of size $3$. Then $\tilde{F}_1=\tilde{F}_2$ contains an element of $\Omega$ of rank $d-1$ belonging to $P_\sigma$ for some ordinary coatom $\sigma$. Thus, by deleting the rank $d-1$ element from these two chains, we find that $\hat{F}_1=\hat{F}_2$. Furthermore, both $F_1$ and $F_2$ must contain the coatom $\sigma$ of $\Omega$. Indeed, if $F$ is any middle chain of $\sigma$, then in the poset $\tilde{\Sigma}\Gamma_\sigma$, the chain $\tilde{F}$ is a chain that contains an element of $\Omega$ of rank $d-1$ belonging to $P_\sigma$ (and only $P_\sigma$). Hence, by appending $\sigma$ to $\hat{F}_1=\hat{F}_2$, we recover $F_1=F_2$ as desired. 
  \medskip
  \newline
   \textbf{Placeholders:}
   At this point, we have pre-blocks of size $3$ and $4$ of the form:
$$\{\beta(G),G,r(G)\cup \{d\}\}$$
$$\{F,\beta(G),G,r(G) \cup \{d\}\}.$$
We will redeem the placeholders in these pre-blocks with actual chains of $\Omega$ with the appropriate rank set. Because $\Gamma_\sigma = (P_\sigma \setminus \{\sigma\}) \uplus \partial \Gamma_\sigma$, we find that no middle chains can also be bottom chains. Therefore, all elements of $\Delta(\Omega)$ are currently used in at most one pre-block. Therefore, to make a rank-set preserving bijection between our pre-blocks and the order complex of $\Omega$, it suffices to verify that the number of $K$-chains of $\Omega$ equals the number of $K$-chains in our pre-blocks (with placeholders) for each $K\subseteq [d]$. 

By construction, we know that the number of $K$-chains of $\Omega$ equals the number of $K$-chains in our pre-blocks for $K \subseteq [d-1]$.
Hence, to verify that our pre-blocks (with placeholders) contain the same number of chains as $\Omega$ for each rank set, it suffices to prove that our collection of pre-blocks satisfies the generalized Dehn--Sommerville equations. Two vectors that satisfy these equations and have the same entries for $K\subseteq [d-1]$ must be equal. 
It follows from Theorem $\ref{thm: BayerKlapper}$ that computing a \textbf{cd}-index of these pre-blocks implies that there is a rank-set preserving bijection between the placeholders in our pre-blocks and the unassigned elements of $\Delta(\Omega)$. Choosing an arbitrary rank-set preserving bijection will establish $\tau(G)$ for each $G$, and complete the desired partition into our desired blocks of the faces of the order complex of $\Omega$.
\medskip
\newline
\textbf{The \textbf{cd}-index:} Our goal is to show that the constructed pre-blocks yield a non-negative \textbf{cd}-index. Each pre-block has a unique coatom $\sigma$ of $\Omega$ associated to it given by the element of rank $d$ contained in the corresponding $\beta(G)$. To this end, we say that the contribution of $\sigma$ to the \textbf{cd}-index of $\Omega$ will be the set of \textbf{cd}-words generated by the pre-blocks associated to $\sigma$. Notice that $\sigma_t$ will have no associated pre-blocks; it will have an empty contribution to the \g{cd}-index of $\Omega$. Naturally, all pre-blocks will come in two forms, those of size $3$ and those of size $4$. It remains to show that each non-terminal coatom $\sigma$ of $\Omega$ contributes a (nonempty) sum of \textbf{cd}-words with positive integral coefficients. In doing so, by summing together all of these \textbf{cd}-words, we will acquire the \textbf{cd}-index of $\Omega$ as desired. 

Fix an ordinary coatom $\sigma$ of $\Omega$. We begin by counting the contributions of $\sigma$ from its pre-blocks of size four. Consider all pre-blocks of the form $\{F,\beta(G),G,r(G) \cup\{d\}\}$ with $\sigma \in \beta(G)$. Let $S_f(\hat{F})$ (resp. $S_h(\hat{F}))$ be the flag $f$- (resp. flag $h$-) word of $\hat{F}$ in noncommuting variables \g{a} and \g{b} in the poset $\partial\Gamma_\sigma$ of rank $d-1$. Then, the pre-block of size four given by $\{F,\beta(G),G,r(G)\cup \{d\}\}$ contributes the following to the chain polynomial of $\Omega$. 
$$S_f(\hat{F})\g{ab}+S_f(\hat{F})\g{bb}+S_f(\hat{F})\g{ba}+S_f(\hat{F})\g{bb}$$
By applying the transformation $\g{a}\mapsto \g{a}-\g{b}$ and $\g{b} \mapsto \g{b}$, we recover the contribution to the \g{ab}-polynomial of $\Omega$ as follows:
$$S_h(\hat{F})(\g{a}-\g{b})\g{b}+S_h(\hat{F})\g{bb}+S_h(\hat{F})\g{b}(\g{a}-\g{b})+S_h(\hat{F})\g{bb}=S_h(\hat{F})(\g{ab}+\g{ba}).$$

Observe that $\partial \Gamma_\sigma$ is \il{S}-partitionable of rank $d-1$ by Remark~\ref{rem:boundary_is_S_part}. Therefore, by the induction hypothesis, it has a non-negative \textbf{cd}-index. Furthermore, because there is a bijection (given by adding/deleting the element $\sigma$) between middle chains of $\sigma$ and elements of $\Delta(\partial \Gamma_\sigma)$, we conclude that all of the pre-blocks of size 4 associated to $\sigma$ yield $\Phi(\partial \Gamma_\sigma)\cdot \mathbf{d}$. Therefore, the collection of all pre-blocks of size $4$ gives a contribution of 
$$\sum_{\substack{\sigma \text{ ordinary}\\ \text{in $\Omega$} }}\Phi(\partial \Gamma_\sigma)\cdot \mathbf{d}$$
to the \textbf{cd}-index of $\Omega$. We reiterate that by the induction hypothesis and Remark~\ref{rem:boundary_is_S_part}, each summand is a non-negative sum of \textbf{cd}-words. Therefore, the set of all pre-blocks of size four contribute a non-negative collection of \textbf{cd}-words to the \textbf{cd}-index of $\Omega$. 

It remains to count the contributions of the pre-blocks of size 3. Consider $G$ with $d\not \in r(G)$ that is in a pre-block of size 3. Let $T_f(G)$ (resp. $T_h(G))$ be the flag $f$- (resp. flag $h$-) word in noncommuting variables \g{a} and \g{b} in $P_1\setminus\{\sigma_1\}$ or $\tilde{\Sigma}\Gamma_\sigma$ for an ordinary coatom $\sigma$. Then, the block of size three given by $\{\beta(G),G,r(G)\cup \{d\}\}$ contributes the following to the chain polynomial of $\Omega$:
$$T_f(G)\g{b}+T_f(G)\g{a}+T_f(G)\g{b}=2T_f(G)\g{b}+T_f(G)\g{a}.$$
By applying the transformation $\g{a}\mapsto \g{a}-\g{b}$ and $\g{b} \mapsto \g{b}$, we recover the contribution to the \g{a},\g{b}-polynomial of $\Omega$ as follows:
$$2T_h(G)\g{b}+T_h(G)(\g{a}-\g{b})=T_h(G)(\g{a}+\g{b}).$$
If we sum over all chains $G$ such that $\sigma_1\in \beta(G)$, we acquire a contribution of $$\Phi(\partial \overline{\sigma_1})\cdot \g{c}.$$ 

It remains to count the contributions of the remaining pre-blocks of size 3. We begin by investigating which chains $G$ with $d\not \in r(G)$ are contained in these remaining pre-blocks. Such chains have the following properties:
\begin{itemize}
    \item The highest ranked element of $G$ lies in a partition class $P_\sigma$ for some ordinary coatom $\sigma$. 
    \item The chain $G$ is not the top chain of some $\hat{F}\in \Delta(\partial \Gamma_\sigma)$ in the poset $\tilde{\Sigma}\Gamma_\sigma$. 
\end{itemize}
These remaining chains $G$ are exactly the contributions of the ordinary \il{S}-partition classes in the recursive \il{S}-partition of each $\tilde{\Sigma}\Gamma_\sigma$. By property (P2) of the induction hypothesis, the \g{cd}-index contribution of each partition class of $\tilde{\Sigma}\Gamma_\sigma$ is non-negative. Let $\mathcal{C}_{\Omega}(\sigma)$ denote the contribution of $\sigma$ to the \textbf{cd}-index of $\Omega$. We get the following recursive formula for the \g{cd}-index of $\Omega$. 
$$\Phi(\Omega)=\Phi(\partial \overline{\sigma_1})\cdot \g{c} + \sum_{\substack{\sigma \text{ ordinary}\\ \text{in $\Omega$} }}\Phi(\partial \Gamma_\sigma)\cdot \mathbf{d} + \sum_{\substack{\sigma \text{ ordinary}\\ \text{in $\Omega$} }} \ \sum_{\substack{\omega \text{ ordinary}\\ \text{in $\tilde{\Sigma}\Gamma_\sigma$} }}\mathcal{C}_{\tilde{\Sigma}\Gamma_\sigma}(\omega) \cdot \g{c}.$$
\end{proof}

Thus, we have established the non-negativity of the \g{cd}-index of \il{S}-partitionable posets by examining the contributions from each coatom $\sigma$. We follow this by showing that the blocks corresponding to a given coatom in the recursive \il{S}-partition structure exhibit a finer structure; they can be merged so that each new partition class contributes a single \g{cd}-word to the \g{cd}-index of $\Omega$. 

\begin{corollary}\label{cor:single_word}
For an \il{S}-partitionable poset $\Omega$, the blocks of $\Delta(\Omega)$ corresponding to each coatom can be merged in such way so that each new partition class contributes a single \g{cd}-word to the \g{cd}-index of $\Omega$. 
\end{corollary}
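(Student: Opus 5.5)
We prove the corollary by induction on the rank, using the recursive formula from the proof of Theorem~\ref{Th.S-Partitionable}. The goal is a refined statement: for each coatom $\sigma$ of $\Omega$, the blocks assigned to $\sigma$ can be grouped into classes so that each class contributes exactly one \g{cd}-word, with coefficient $1$, to $\Phi(\Omega)$. Taking the union over all coatoms then gives the required partition of $\Delta(\Omega)$.

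The base cases are immediate. For $d=0$ there is the single block $\{\emptyset\}$, contributing the word $1$. For $d=1$ there is the single block $\{\sigma_1,\emptyset,\sigma_2\}$, contributing $\g{c}$.

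For the inductive step, fix $d\ge 2$ and assume the refined statement for all \il{S}-partitionable posets of rank at most $d$. We handle the three kinds of contributions separately.

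\emph{Initial coatom.} The blocks of $\sigma_1$ are $\{\beta(G),G,\tau(G)\}$ with $G$ ranging over $\Delta(\partial\overline{\sigma_1})$. The map $G\mapsto\{\beta(G),G,\tau(G)\}$ sends each chain to a block and multiplies its $\g{ab}$-contribution by $\g{c}$. By induction, $\Delta(\partial\overline{\sigma_1})$ is partitioned into classes $\mathcal{B}$, each contributing a single word $w_{\mathcal{B}}$. Taking the union of the blocks of the chains in $\mathcal{B}$ gives a class contributing $w_{\mathcal{B}}\g{c}$.

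\emph{Ordinary coatoms, size-$4$ blocks.} For ordinary $\sigma$, the size-$4$ blocks are indexed by $\hat F\in\Delta(\partial\Gamma_\sigma)$, and each contributes $S_h(\hat F)\g{d}$. The poset $\partial\Gamma_\sigma$ is \il{S}-partitionable of rank $d-1$ by Remark~\ref{rem:boundary_is_S_part}. Applying the induction hypothesis to it and merging the size-$4$ blocks over each class gives classes contributing $w\g{d}$.

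\emph{Ordinary coatoms, remaining size-$3$ blocks.} These are indexed by the chains $G$ of $\tilde\Sigma\Gamma_\sigma$ lying in blocks of ordinary coatoms $\omega\neq\tau_\sigma$ of $\tilde\Sigma\Gamma_\sigma$. Within $\tilde\Sigma\Gamma_\sigma$, the blocks of each such $\omega$ are partitioned by induction into single-word classes. Each such class is a set of chains of $\tilde\Sigma\Gamma_\sigma$ (including top chains, which are genuine chains there). The issue is that we only transport the chains $G$ of rank sets avoiding $d-1$ together with their size-$3$ blocks in $\Omega$, so we must check that the words still match.

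The point to verify is that the $\g{ab}$-contribution of a class in $\Omega$ equals its $\g{ab}$-contribution in $\tilde\Sigma\Gamma_\sigma$ multiplied by $\g{c}$. The image in $\Omega$ is the union of the blocks $\{\beta_\Omega(G'),G',\tau_\Omega(G')\}$ over all chains $G'$ in the class, not just those with $d-1\notin r(G')$. This works because every chain of $\tilde\Sigma\Gamma_\sigma$ in that class is a chain of $\Omega$ avoiding rank $d$. Such a chain is not a middle-chain image: those are the top chains of $\partial\Gamma_\sigma$, which belong to the $\tau_\sigma$-part and are therefore excluded. So each chain in the class heads its own size-$3$ block in $\Omega$. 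The $\g{ab}$-contribution of the resulting union is then $\sum_{G'} T_h(G')\g{c}$, which is $w\g{c}$. This matches the third sum in the recursion, so each merged class contributes a single word $w\g{c}$.

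The main obstacle is this last bookkeeping step: confirming that the partition of $\tilde\Sigma\Gamma_\sigma$ given by induction only involves chains that are legitimately heads of size-$3$ blocks of $\Omega$ assigned to $\sigma$, and that every such head is used exactly once. I would settle it by rechecking the characterization of the remaining size-$3$ chains given in the proof of Theorem~\ref{Th.S-Partitionable}. That characterization says these chains are exactly the chains of $\Delta(\tilde\Sigma\Gamma_\sigma)$ not contained in the blocks of $\tau_\sigma$. Once that is checked, the union over all the merged classes exhausts all blocks, and the corollary follows.
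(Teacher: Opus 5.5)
Your proposal is correct and follows the same route as the paper. It is an induction on rank that refines the blocks of $\sigma_1$ via $\partial\overline{\sigma_1}$ (words multiplied by $\g{c}$), the size-$4$ blocks of an ordinary $\sigma$ via $\partial\Gamma_\sigma$ (words multiplied by $\g{d}$), and the remaining size-$3$ blocks via the classes of the ordinary coatoms of $\tilde{\Sigma}\Gamma_\sigma$ (words multiplied by $\g{c}$). Your treatment of the last case is more explicit than the paper's ``analogous argument,'' since you check that every chain in such a class, including the top and middle chains of $\tilde{\Sigma}\Gamma_\sigma$, is a chain of $\Omega$ avoiding rank $d$ that heads its own unenlarged size-$3$ block of $\sigma$. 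The one fact worth stating outright there is that no such chain contains $\tau_\sigma$, because every chain through $\tau_\sigma$ is a bottom chain in a block of $\tau_\sigma$.
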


\begin{proof}
The proof is by induction on rank. For $d=1$, no refinement is necessary as $\sigma_1$ already contributes the single $\g{cd}$-word \g{c}. Assume $d\geq 1$ and that the statement is true for all \il{S}-partitionable posets of rank at most $d$. For $\sigma_1$ being the first coatom of $\Omega$, simply apply induction to the \il{S}-partitionable poset $\partial\overline{\sigma}_1$ to refine its partition into classes. Then, apply this refinement to the blocks of size three
$$\{\beta(G),G, \tau(G)\}$$
with $G\in \Delta(\partial\overline{\sigma_1})$. In doing so, each single \g{cd}-word will turn into one with \g{c} multiplied on the right. An analogous argument establishes this for all other blocks of size $3$. As for the blocks of size four, apply the refined partition of $\partial \Gamma_\sigma$ to the corresponding blocks of size $4$ given by 
$$\{F,\beta(G),G,\tau(G)\}$$
via the correspondence of adding/deleting $\sigma$ between $F$ and $\hat{F}$. Each single \g{cd}-word of $\partial \Gamma_\sigma$ will be multiplied on the right by \g{d}. Therefore, the partition of the blocks can be refined so that each partition class of the refined partition contributes a single \g{cd}-word to the \g{cd}-index of $\Omega$.
\end{proof}

\begin{remark}\label{rem:rev_part}
    Suppose that $\Omega$ is an \il{S}-partitionable poset. Define the \il{reverse partition class of $P_\sigma$} as $P_\sigma'\coloneq (\partial \sigma \setminus \Gamma_\sigma) \cup \{\sigma\}$. If the set $\{P_\sigma'\}$ also partitions the poset $\Omega$, then we may explicitly define $\tau(G)$ for each $G$ with $d \not \in r(G)$. Observe that there will be a unique partition class $P_{\sigma'}'$ in which the face of highest rank of $G$ will belong. Append $\sigma'$ to $G$ to form $\tau(G)$. Notice that because $P_\sigma \cap P_\sigma'=\{\sigma\}$ for every coatom $\sigma$, we find that $\beta(G)$ and $\tau(G)$ will contain distinct coatoms of $\Omega$. 
\end{remark}

Observe that such a reverse partitioning is always possible for a shellable polytopal complex homeomorphic to a sphere. This is because any shelling order of a sphere has the property that this total order in reverse is also a shelling order. Thus, such a reversible shelling recovers this reversible partition property. If the reverse partition classes of an \il{S}-partitionable complex $\Omega$ also form a partition of $\Omega$, then we say that $\Omega$ is \textit{reversibly partitionable}. This begs the following question:

\begin{question}
    Which \il{S}-partitionable posets are reversibly partitionable?
\end{question}

Even when restricting to the setting of Eulerian simplicial complexes, this question remains evasive. Recall that a simplicial complex is \textit{partitionable} if its face lattice can be partitioned into Boolean intervals $[R_i,\sigma_i]$ where $R_i$ is a face contained in the facet $\sigma_i$. If a simplicial complex is partitionable, we will say that each $R_i$ is a restriction face of the partition.  

\begin{question}
Given an Eulerian partitionable simplicial complex $\Omega$ with restriction faces $\{R_i\}_{i\in [t]}$ and facets $\{\sigma_i\}_{i\in [t]}$, when does the set $\{\sigma_i \setminus R_i\}_{i\in [t]}$ form a set of restriction faces of $\Omega$?
\end{question}

\subsection{Consequences}
Stanley~\cite{Stanley1994} introduced \il{S}-shellable spheres as a subclass of Eulerian posets that has a non-negative \g{cd}-index. Just as partitionable simplicial complexes generalize shellable ones, our next result shows that \il{S}-partitionable posets generalize the notion of \il{S}-shellability.
\begin{definition}[\cite{Stanley1994}] Let $\Omega$ be an Eulerian regular CW-complex of dimension $d$. We say that $\Omega$ is \textit{\il{S}-shellable} (short for ``spherically shellable'') if either $\Omega=\{\emptyset\}$, or else we can linearly order the facets (open $d$-cells) of $\Omega$, say $\sigma_1,\hdots, \sigma_t$ such that for all $1\le i \le t$ the following conditions hold (where both \ $\bar{ }$ \ and \text{cl} denote closure)
\begin{enumerate}[a)]
    \item $\partial\overline{\sigma_1}$ is \il{S}-shellable (of dimension $d-1$)
    \item For $2\le i \le t-1$, let 
    $$\Gamma_{\sigma_i}=\text{cl}[\partial \overline{\sigma_i}-((\overline{\sigma_1}\cup \hdots\cup \overline{\sigma_{i-1}})\cap \overline{\sigma_i})].$$
\end{enumerate}
Thus $\Gamma_{\sigma_i}$ is the subcomplex of $\partial \overline{\sigma_i}$ generated by all $(d-1)$-cells of $\partial\overline{\sigma_i}$ which are not contained in the complex $\overline{\sigma_1}\cup \hdots\cup \overline{\sigma_{i-1}}$ generated by previous cells. Then we require that $\Gamma_{\sigma_i}$ is near-Eulerian of dimension $d-1$, and that the semisuspension  $\tilde\Sigma\Gamma_{\sigma_i}$ is \il{S}-shellable, with first facet of the shelling being the facet $\tau=\tau_i$ adjoined to $\Gamma_{\sigma_i}$ to obtain $\tilde\Sigma\Gamma_{\sigma_i}$.
\end{definition}

As noted in Stanley~\cite{Stanley1994}, all $S$-shellable regular CW complexes are spheres. Therefore, we do not lose any generality when referring to Stanley's \il{S}-shellable complexes as \il{S}-shellable spheres. The following corollary recovers Theorem~2.2 of Stanley~\cite{Stanley1994}.

\begin{corollary} All
    \il{S}-shellable spheres are \il{S}-partitionable. In particular, \il{S}-shellable spheres have a non-negative \g{cd}-index.
\end{corollary}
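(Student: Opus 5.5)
We argue by induction on the dimension $d$ of the \il{S}-shellable sphere $\Omega$, showing directly that an \il{S}-shelling $\sigma_1,\dots,\sigma_t$ induces an \il{S}-partition in the sense of Definition~\ref{def:S_part}. The non-negativity of the \g{cd}-index then follows immediately from Theorem~\ref{Th.S-Partitionable}. The base case $\Omega=\{\emptyset\}$ is trivial. The low-rank case (a diamond, i.e.\ two points forming $\mathbb{S}^0$) is also immediate: take $P_{\sigma_1}=\overline{\sigma_1}$ and $P_{\sigma_2}=\{\sigma_2\}$.

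For the inductive step we define the natural partition classes
\[
P_{\sigma_i}\coloneq \overline{\sigma_i}\setminus\bigl(\overline{\sigma_1}\cup\cdots\cup\overline{\sigma_{i-1}}\bigr),
\]
which manifestly partition $\Omega\setminus\{\hat 1\}$ and satisfy $P_{\sigma_i}\subseteq\overline{\sigma_i}$. Condition (a) of Definition~\ref{def:S_part} holds because $P_{\sigma_1}=\overline{\sigma_1}$, and $\partial\overline{\sigma_1}$ is \il{S}-shellable of lower dimension, hence \il{S}-partitionable by induction.

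For condition (b), the last cell $\sigma_t$ of a shelling of a sphere has all its boundary already covered. Since $\Omega$ is a pseudomanifold-like Eulerian complex, every $(d-1)$-cell lies in exactly two facets. All $(d-1)$-cells of $\partial\sigma_t$ therefore lie in earlier facets, so $P_{\sigma_t}=\{\sigma_t\}$. Uniqueness of the initial and terminal coatoms follows from the same count: for $2\le i\le t-1$, the complex $\Gamma_{\sigma_i}$ is near-Eulerian of rank $d$, hence nonempty and proper in $\partial\overline{\sigma_i}$.

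The main step is condition (c). Here we must check that Stanley's $\Gamma_{\sigma_i}$ coincides with $\overline{P_{\sigma_i}\setminus\{\sigma_i\}}$ and that
\[
\Gamma_{\sigma_i}=(P_{\sigma_i}\setminus\{\sigma_i\})\uplus\partial\Gamma_{\sigma_i}.
\]
Concretely, we need two facts:
\begin{itemize}
\item a face of $\partial\overline{\sigma_i}$ is new, i.e.\ not in $\overline{\sigma_1}\cup\cdots\cup\overline{\sigma_{i-1}}$, if and only if it lies in $\Gamma_{\sigma_i}$ but not in $\partial\Gamma_{\sigma_i}$;
\item $\partial\Gamma_{\sigma_i}$ equals $\Gamma_{\sigma_i}\cap(\overline{\sigma_1}\cup\cdots\cup\overline{\sigma_{i-1}})$.
\end{itemize}
The plan is to use that $\Gamma_{\sigma_i}$ and its complement $\Gamma'_{\sigma_i}$ (the old $(d-1)$-cells) are both $(d-1)$-balls glued along a common boundary inside the sphere $\partial\overline{\sigma_i}$. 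This can be derived from Stanley's results that the partial unions in an \il{S}-shelling are balls and that $\tilde\Sigma\Gamma_{\sigma_i}$ is an \il{S}-shellable sphere. With that in hand, a face outside $\Gamma'_{\sigma_i}$ lies in the interior of $\Gamma_{\sigma_i}$, and faces of $\Gamma_{\sigma_i}\cap\Gamma'_{\sigma_i}$ are exactly the boundary faces. Any old face of $\overline{\sigma_i}$ lies in $\Gamma'_{\sigma_i}$, since the old part of $\overline{\sigma_i}$ is generated by old $(d-1)$-cells by the pure-intersection property of shellings.

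Finally, $\tilde\Sigma\Gamma_{\sigma_i}$ is \il{S}-shellable with first facet $\tau_i$ by definition, so it is \il{S}-partitionable with initial coatom $\tau_i$ by the induction hypothesis applied to the lower-dimensional sphere. The initial class of that induced partition is exactly $\overline{\tau_i}$, as required.

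The expected obstacle is precisely this identification of the boundary: that the intersection with earlier cells is pure of dimension $d-1$ and equals $\partial\Gamma_{\sigma_i}$. If the ball/sphere argument is too heavy, we would fall back on a purely combinatorial proof. A face of $\partial\Gamma_{\sigma_i}$ is one contained in a $(d-1)$-cell of $\Gamma_{\sigma_i}$ whose link in $\Gamma_{\sigma_i}$ is not Eulerian-closed. Using the Eulerian link conditions of the semisuspension, such a face must also lie in a cell of $\Gamma'_{\sigma_i}$.
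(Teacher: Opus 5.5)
Your proposal is correct and follows the paper's proof. Both use the same partition of faces by the first facet of the shelling containing them, apply induction to $\partial\overline{\sigma_1}$ and to $\tilde\Sigma\Gamma_{\sigma_i}$ with initial facet $\tau_i$, and identify Stanley's $\Gamma_{\sigma_i}$ with $\overline{P_{\sigma_i}\setminus\{\sigma_i\}}$.

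Two refinements are worth noting. Your diamond-property argument for $P_{\sigma_t}=\{\sigma_t\}$ is a cleaner combinatorial substitute for the paper's appeal to the ball $\bigcup_{i<t}\overline{\sigma_i}$. Your explicit check of $\Gamma_{\sigma_i}=(P_{\sigma_i}\setminus\{\sigma_i\})\uplus\partial\Gamma_{\sigma_i}$ covers a condition the paper's proof never verifies. The purity of $\overline{\sigma_i}\cap(\overline{\sigma_1}\cup\cdots\cup\overline{\sigma_{i-1}})$ is not built into Stanley's definition, so it must be derived, e.g.\ from the fact that the partial unions are balls, as you indicate.
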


\begin{proof}
    Let $\Omega$ be an \il{S}-shellable sphere. If $\Omega$ is of dimension $-1$, then $\Omega=\{\emptyset\}$ which is both \il{S}-partitionable and \il{S}-shellable. Therefore, we may suppose that $\Omega$ has non-negative dimension. In this case, consider the total order of the facets of $\Omega$ provided by the \il{S}-shelling as $\sigma_1,\sigma_2,\ldots,\sigma_t$. We shall define $P_i$ to consist of all faces that belong to $\overline{\sigma_i} \setminus \left(\cup_{j<i} \overline{\sigma_j}\right)$. In other words, $P_i$ consists of all faces of $\Omega$ that have the property that $\sigma_i$ is the first facet of the total order to which it belongs.
    
    By the definition of \il{S}-shellability, $P_1\setminus \{\sigma_1\}=\partial \overline{\sigma}_1$ is \il{S}-shellable, and thus is \il{S}-partitionable by induction. Furthermore, by the definition of \il{S}-shellability, we know that for $2\leq i \leq t-1$
    $$\Gamma_{\sigma_i}\coloneq \text{cl}[\partial\overline{\sigma_i}\setminus(\overline{\sigma_1}\cup \cdots \cup \overline{\sigma_{i-1}})]$$
    exactly equals $\overline{P_i\setminus\{\sigma_i\}}$ as in the definition of \il{S}-partitionability. From the \il{S}-shelling, we know that $\Gamma_{\sigma_i}$ is near-Eulerian of dimension $d-1$ and that the semisuspension $\tilde{\Sigma}\Gamma_{\sigma_i}$ is \il{S}-shellable with the first facet of the shelling being the newly added facet $\tau_i$. Therefore, by induction, this \il{S}-shelling on the $(d-1)$-dimensional complex $\tilde{\Sigma}\Gamma_{\sigma_i}$ induces an \il{S}-partition of dimension $d-1$ with the first partition class containing the newly added facet $\tau_i$. Finally, because all \il{S}-shellable complexes are spheres, it must be that the boundary of the last facet, $\partial\overline{\sigma_t}$, is (homeomorphically) attached to the entire boundary of the ball $\bigcup_{i=1}^{t-1}\overline{\sigma_i}$. Therefore $P_t=\{\sigma_t\}$, and we conclude that $\Omega$ is \il{S}-partitionable.
\end{proof}

\begin{corollary}\label{cor:part_implies_Spart}
   All Eulerian partitionable simplicial complexes are \il{S}-partitionable. In particular, they have a non-negative \g{cd}-index.
\end{corollary}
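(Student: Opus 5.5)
We argue by induction on the rank $d+1$, using the facet partition $\Omega\setminus\{\hat 1\} = \biguplus_i [R_i,\sigma_i]$ of the face poset to define the classes $P_{\sigma_i}=[R_i,\sigma_i]$. The base cases $\Omega=\{\emptyset\}$ and rank $2$ (two vertices, $R_1=\emptyset$, $R_2=\sigma_2$) are immediate.

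\emph{Identifying the special classes.} An Eulerian simplicial complex is a homology-sphere-like object in the Euler-characteristic sense, and the $h$-vector of a partitionable complex counts facets by $|R_i|$. Hence $h_0=1$ forces exactly one facet with $R_i=\emptyset$, which we take as the initial coatom $\sigma_1$, so that $P_{\sigma_1}=\overline{\sigma_1}$. The Dehn--Sommerville relation $h_d=h_0=1$ forces exactly one facet with $R_i=\sigma_i$, which we take as the terminal coatom, so that $P_{\sigma_t}=\{\sigma_t\}$. Since $\partial\overline{\sigma_1}$ is the boundary of a simplex, it is a shellable, hence partitionable, Eulerian simplicial complex of smaller rank. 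By induction it is \emph{S}-partitionable; alternatively, we can write down its $S$-partition directly from a shelling of the simplex boundary.

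\emph{Ordinary classes.} For every other facet, $\emptyset\ne R\subsetneq\sigma$. Then
\[
\Gamma_\sigma=\overline{[R,\sigma]\setminus\{\sigma\}}
\]
is the subcomplex of $\partial\overline{\sigma}$ generated by the facets $\sigma\setminus\{v\}$ with $v\in R$. This is $\partial R * \overline{\sigma\setminus R}$, a ball. Its boundary is $\partial R*\partial(\sigma\setminus R)$, and the faces of $\Gamma_\sigma$ not on this boundary are exactly those containing $R$, that is, $[R,\sigma)$. This gives $\Gamma_\sigma=(P_\sigma\setminus\{\sigma\})\uplus\partial\Gamma_\sigma$. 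The ball $\partial R*\overline{\sigma\setminus R}$ is obtained from the simplex-boundary sphere $\partial\overline\sigma$ by deleting the cells $\sigma\setminus\{v\}$ for $v\notin R$. It is therefore near-Eulerian with $\tilde\Sigma\Gamma_\sigma$ an Eulerian CW sphere whose new coatom $\tau_\sigma$ has boundary $\partial R*\partial(\sigma\setminus R)$. In fact, the interval structure shows directly that the poset $\Gamma_\sigma\cup\{\tau_\sigma,\hat 1\}$ is Eulerian, because it is the face poset of a regular CW sphere.

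\emph{Main obstacle.} We still need $\tilde\Sigma\Gamma_\sigma$ to be \emph{S}-partitionable with $\tau_\sigma$ initial. Since $\tilde\Sigma\Gamma_\sigma$ is no longer simplicial, we cannot simply invoke induction on partitionable simplicial complexes. Our plan is to show that it is $S$-shellable and then invoke the preceding corollary.

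Take $\tau_\sigma$ first. Its boundary $\partial R*\partial(\sigma\setminus R)$ is a join of simplex boundaries, hence a polytope boundary, hence $S$-shellable. Then add the facets $\sigma\setminus\{v\}$, $v\in R$, in any order. At each step, the new part $\Gamma$ is generated by facets of the simplex $\sigma\setminus\{v\}$ not yet covered. These are simplex-boundary balls of the same join form, so their semisuspensions are handled by induction on dimension via the same computation. The last facet is glued along its entire boundary.

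Alternatively, $\tilde\Sigma\Gamma_\sigma$ is combinatorially the boundary of a polytope, namely a pyramid-like object: the join of the simplex $R$-boundary with a cone. We could then cite Bruggesser--Mani shellings and line shellings, which are $S$-shellings, with a line starting at $\tau_\sigma$.

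Granting this, all conditions (a)--(c) of the definition of \emph{S}-partitionability hold. Nonnegativity of the \textbf{cd}-index then follows from Theorem~\ref{Th.S-Partitionable}.
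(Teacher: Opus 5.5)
Your route is the paper's. You use the classes $[R_i,\sigma_i]$, pick the initial and terminal coatoms via $h_0=h_d=1$, and view $\Gamma_\sigma$ as a ball inside $\partial\overline{\sigma}$. You then \emph{S}-shell $\tilde\Sigma\Gamma_\sigma$ by putting the merged cell $\tau_\sigma$ first and the remaining facets of $\partial\overline{\sigma}$ after it in any order, and conclude with the \emph{S}-shellable $\Rightarrow$ \emph{S}-partitionable corollary. You also note that each later step again produces a ball of the same join type, so induction on dimension handles it; the paper asserts \emph{S}-shellability after the merge without making this check.

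You do, however, have $R$ and $\sigma\setminus R$ interchanged throughout the ordinary-class step, and this needs fixing. The facets of $\partial\overline{\sigma}$ lying in $[R,\sigma)$ are the $\sigma\setminus\{v\}$ with $v\notin R$. So $\Gamma_\sigma$ is the closed star $\overline{R}*\partial\overline{(\sigma\setminus R)}$, and $\tau_\sigma$ replaces the facets $\sigma\setminus\{v\}$ with $v\in R$.

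The ball $\partial R*\overline{\sigma\setminus R}$ that you wrote has the same boundary, but its interior faces are those containing $\sigma\setminus R$ and not $R$. So your verification of $\Gamma_\sigma=(P_\sigma\setminus\{\sigma\})\uplus\partial\Gamma_\sigma$ is inconsistent as stated. After swapping the roles, write $\sigma\setminus R=\{s_1,\dots,s_m\}$; the shelling then adds $\sigma\setminus\{s_1\},\dots,\sigma\setminus\{s_m\}$. The $k$-th step has restriction face $R\cup\{s_1,\dots,s_{k-1}\}$. This is a nonempty proper face for $k<m$ and the whole facet for $k=m$, which is exactly what your induction needs.

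Separately, drop the polytopal alternative, because it is false. For $d\ge 3$, $\tilde\Sigma\Gamma_\sigma$ is a $(d-2)$-sphere with at most $d$ vertices. It could therefore only be the boundary of a $(d-1)$-simplex, which forces $\tilde\Sigma\Gamma_\sigma=\partial\overline{\sigma}$, i.e.\ $|R|=1$. For instance, when $|\sigma\setminus R|=1$ it is two simplices glued along their common boundary. Only the \emph{S}-shelling argument, which is the one the paper uses, goes through.
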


\begin{proof} Let $\Delta$ be an Eulerian partitionable simplicial complex. If $\Delta$ has negative dimension, then $\Omega=\{\emptyset\}$ which is both partitionable and \il{S}-partitionable. Therefore, we may suppose that $\Delta$ has dimension $d\geq 0$ with facets $\sigma_1,\hdots,\sigma_t$ and restriction faces $\{R_i\}_{i\in [t]}$. Since $\Delta$ is Eulerian, we have $h_0=h_d=1$. Also, since $\Delta$ is partitionable, the $h$-vector counts the sizes of the restriction faces, and, without loss of generality, we can take $R_1=\emptyset$ and $R_t=\sigma_t$. For each $i$, define $P_i$ to consist of all faces of the interval $[R_i,\sigma_i]$.

Note that $P_1\setminus \{\sigma_1\}=\partial \overline{\sigma_1}$ is \il{S}-shellable, and hence \il{S}-partitionable. For $2\le i\le t-1$, we find that $\Gamma_{\sigma_i}=\overline{P_i\setminus \{\sigma_i\}}$ is exactly $\text{st}_{\partial \overline{\sigma_i}}(R_i)=\{\overline{\tau}:R_i\subset \tau \subsetneq \sigma_i\}\subseteq \partial \overline{\sigma_i}$. Moreover, since any permutation of the facets of a simplex yields a shelling, we can choose a shelling of $\partial \overline{\sigma_i}$ whose final segment has $\text{st}_{\partial\overline{\sigma_i}}(R_i)$ as its closure. Because $\text{st}_{\partial\overline{\sigma_i}}(R_i)$ consists of a nonempty proper subset of facets of $\partial \overline{\sigma_i}$, we may merge all facets of $\partial \overline{\sigma_i}$ not containing $R_i$ together to obtain a ball whose boundary is $\partial \Gamma_{\sigma_i}$. In doing so, we prove that $\Gamma_{\sigma_i}$ is near-Eulerian and that the semisuspension $\Sigma\tilde\Gamma_{\sigma_i}$ is \il{S}-shellable. Hence $\Sigma\tilde\Gamma_{\sigma_i}$ is also \il{S}-partitionable as desired.
\end{proof}

\begin{remark}
    The non-negativity of the \g{cd}-index of Eulerian partitionable simplicial complexes also follows from Stanley~\cite[Theorem~3.1]{Stanley1994}, because the $h$-vectors of partitionable simplicial complexes are known to be non-negative. However, our proof does not use the $h$-vector and computes the \g{cd}-index directly. 
\end{remark}

We finish this section with a quick example. Namely, we compute the \g{cd}-index of the regular CW sphere of dimension $d$ with exactly  two cells in each (non-negative) dimension.
\begin{example}
Let $\Omega$ be the face poset of the regular CW sphere of dimension $d$ with exactly $2$ cells in each (non-negative) dimension. By induction, one may quickly show that $\Omega$ is \il{S}-shellable, and thus \il{S}-partitionable. For $d=0$, we find that $\Omega$ has a \g{cd}-index of $\g{c}$. When computing the $\g{cd}$-index for higher dimensions, we find that the formula in Theorem~\ref{Th.S-Partitionable} reduces to $\Phi(\partial \overline{\sigma_1})\cdot \g{c}$ because $\Omega$ has only two facets $\sigma_1,\sigma_2$. Furthermore, $\partial \overline{\sigma_1}$ is a regular CW sphere of dimension $d-1$ with exactly $2$ cells in each (non-negative) dimension. Therefore, by induction, the \g{cd}-index of $\Omega$ is precisely $\g{c}^{d+1}$. 
\end{example}

\begin{corollary}
Let $M, N$ be two \il{S}-partitionable regular CW complexes such that the last facet of $M$ is combinatorially isomorphic to the first facet of $N$. Then the connected sum $M\# N$ is \textit{S}-partitionable. In particular, if both $M,N$ are \il{S}-partitionable cubical (resp. simplicial) complexes, then the connected sum $M\# N$ is \il{S}-partitionable.
\end{corollary}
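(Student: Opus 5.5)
Let $\sigma_1^M,\dots,\sigma_s^M$ with classes $P_i^M$ be an \il{S}-partition of $M$ with terminal coatom $\sigma_s^M$, and let $\sigma_1^N,\dots,\sigma_r^N$ with classes $P_j^N$ be one of $N$ with initial coatom $\sigma_1^N$. Fix a combinatorial isomorphism $\phi\colon \overline{\sigma_s^M}\to\overline{\sigma_1^N}$. The connected sum $M\# N$ is formed by deleting the open cells $\sigma_s^M$ and $\sigma_1^N$ and identifying $\partial\overline{\sigma_s^M}$ with $\partial\overline{\sigma_1^N}$ via $\phi$; its face poset is $(M\setminus\{\sigma_s^M,\hat 1\})\cup_\phi (N\setminus\{\sigma_1^N,\hat 1\})$ with a new $\hat 1$ adjoined. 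I take for granted that $M\# N$ is Eulerian; for spheres, or more generally Eulerian posets glued along a common boundary sphere, this is a standard Möbius/Euler characteristic computation.

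The partition I would use keeps $P_i^M$ for $i<s$ and $P_j^N$ for $j\ge 2$, and drops both $P_s^M=\{\sigma_s^M\}$ and $P_1^N=\overline{\sigma_1^N}$. Since $P_s^M$ is terminal, $\bigcup_{i<s}P_i^M = M\setminus\{\sigma_s^M\}$ already contains every face of $\partial\overline{\sigma_s^M}$. On the $N$ side, $\bigcup_{j\ge2}P_j^N = N\setminus\overline{\sigma_1^N}$. After the identification these two collections are disjoint and together cover all proper elements of $M\# N$, so the result is a partition with exactly one class for each facet of $M\# N$.

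Next I check the axioms of Definition~\ref{def:S_part}.
\begin{enumerate}[(a)]
\item The initial class $P_1^M=\overline{\sigma_1^M}$ is unchanged, and $\partial\overline{\sigma_1^M}$ is \il{S}-partitionable, so condition (a) holds.
\item The terminal class $\{\sigma_r^N\}$ is unchanged, so condition (b) holds.
\item Every other class is a class of $M$ or of $N$ whose closure lies inside a single facet, so $\Gamma_\sigma$ depends only on $\overline{\sigma}$ and on $P_\sigma$. Both are preserved, because the facet closures of the remaining facets embed into $M\# N$. Hence $\Gamma_\sigma$, $\partial\Gamma_\sigma$ and $\tilde\Sigma\Gamma_\sigma$ are unchanged, and condition (c) is inherited verbatim.
\end{enumerate}
The only new ordinary classes are the former ordinary classes $P_j^N$ that touch the glued region, and their data are untouched. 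In the degenerate case where $M$ has just two facets, $P_1^M$ is still initial, and no class is lost.

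The main obstacle is a subtlety of the connected sum: the glued poset must remain a regular CW complex. For this, the closures $\overline{\sigma}$ must still be intervals $[\hat 0,\sigma]$ isomorphic to the originals, with no extra identifications of faces between the $M$ part and the $N$ part beyond $\partial\overline{\sigma_s^M}\cong\partial\overline{\sigma_1^N}$. The Eulerian property also has to be verified, or taken as part of the hypothesis. Both follow because the gluing is along the boundary of a single cell.

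For the final sentence, I need every facet of an \il{S}-partitionable cubical complex to be isomorphic to the cube of that dimension, and likewise for simplices. So the last facet of $M$ and the first facet of $N$ are automatically combinatorially isomorphic, and the first part applies.
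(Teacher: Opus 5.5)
Your proposal is correct and is the same argument as the paper's proof: discard the terminal class $\{\sigma_s^M\}$ of $M$ and the initial class $\overline{\sigma_1^N}$ of $N$, keep every other class, and observe that conditions (a)--(c) are inherited because each remaining class and its facet closure are unchanged by the gluing. You are more careful than the paper, which does not address the Eulerian property at all; the count you defer is immediate, since deleting $\sigma_s^M$, the Eulerian interval $[\hat 0,\sigma_1^N]$ (respectively $[\phi(x),\sigma_1^N]$ for the upper intervals $[x,\hat 1]$ with $x$ in the glued boundary), and one copy of $\hat 1$ changes a vanishing alternating rank sum by $-(-1)^d-0-(-1)^{d+1}=0$.
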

\begin{proof}
Form the connected sum $M\# N$ by gluing the boundary of the last facet of $M$ to the boundary of the first facet of $N$. Then, the \il{S}-partition induced from all facets not equal to these two will form an \il{S}-partition of $M \# N$. 
\end{proof}

\section{Semi-Eulerian Partitionability}\label{sec:SE_part}

Building on our notion of \il{S}-partitionable posets, we introduce a broader class of (semi-Eulerian) \il{SE}-partitionable posets. This extension will allow us to provide an explicit recursive formula to compute the semi-Eulerian \g{cd}-index as given by Definition~\ref{def:semicd}. The definition of \il{SE}-partitionable posets enables us to construct many non-Eulerian simplicial and non-simplicial examples in which the \g{cd}-index is manifestly non-negative. 

\subsection{A Motivating Example}
We begin by considering the following non-simplicial decomposition of the $2$-torus $T$.

\begin{figure}[!ht]
\centering

\begin{tikzpicture}[scale=1.3, thick]

\foreach \x in {0,1.5,3}
  \foreach \y in {2,3,4}
    \filldraw[black] (\x,\y) circle (1.3pt);

\foreach \y in {1,2,3,4}
  \draw (0,\y) -- (4.5,\y);
\foreach \x in {0,1.5,3,4.5}
  \draw (\x,1) -- (\x,4);
\draw (0,2) -- (1.5,3);
\draw (1.5,3) -- (3,4);
\draw (1.5,1) -- (3,2);
\draw (3,2) -- (4.5,3);


\draw[pattern=north east lines, pattern color=gray] (0,2)--(1.5,3)--(0,3)--cycle;
\draw[pattern=north east lines, pattern color=gray] (0,2)--(1.5,3)--(1.5,2)--cycle;

\draw[pattern=north east lines, pattern color=gray] (1.5,3)--(3,4)--(3,3)--cycle;
\draw[pattern=north east lines, pattern color=gray] (1.5,3)--(3,4)--(1.5,4)--cycle;

\draw[pattern=north east lines, pattern color=gray] (1.5,1)--(3,2)--(3,1)--cycle;
\draw[pattern=north east lines, pattern color=gray] (1.5,1)--(3,2)--(1.5,2)--cycle;

\draw[pattern=north east lines, pattern color=gray] (3,2)--(4.5,3)--(4.5,2)--cycle;
\draw[pattern=north east lines, pattern color=gray] (3,2)--(4.5,3)--(3,3)--cycle;

\draw[pattern=north east lines, pattern color=gray] (0,3) rectangle (1.5,4);
\draw[pattern=north east lines, pattern color=gray] (3,3) rectangle (4.5,4);
\draw[pattern=north east lines, pattern color=gray] (1.5,2) rectangle (3,3);
\draw[pattern=north east lines, pattern color=gray] (0,1) rectangle (1.5,2);
\draw[pattern=north east lines, pattern color=gray] (3,1) rectangle (4.5,2);

\draw[->, thick] (0,2) -- (0,2.6);
\draw[->, thick] (4.5,2.55) -- (4.5,2.55);
\draw[->>, thick] (2,4) -- (2.5,4);
\draw[->>, thick] (2,1) -- (2.5,1);

\end{tikzpicture}
    \caption{A decomposition of the $2$-torus.}
    \label{fig:decomp_2torus}
\end{figure}
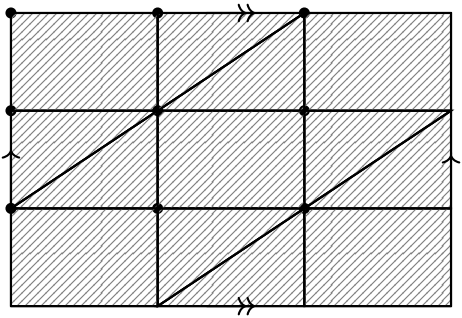

Observe that the torus has an Euler characteristic of $0$ while the sphere $\mathbb{S}^2$ has an Euler characteristic of 2. Therefore, the face poset is not Eulerian and we cannot directly produce an \il{S}-partition to compute the \g{cd}-index of $T$.  However, it is semi-Eulerian, and we can consider the partition provided in Figure~\ref{fig:SEdecomp}.

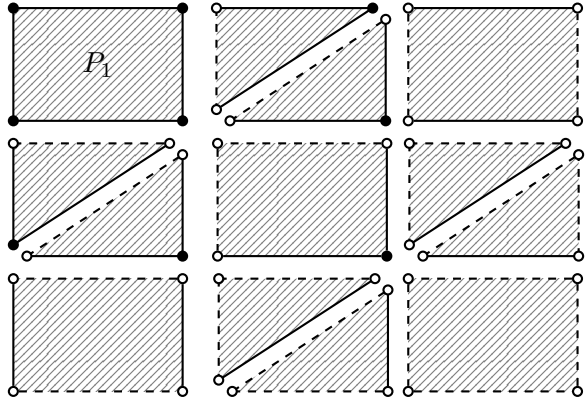
\begin{figure}[!ht]
\centering

\begin{tikzpicture}[scale=1.5, thick]
\tikzset{
  open/.style={circle,draw,fill=white,inner sep=1.2pt},
}
\tikzset{
  open/.style={circle,draw,fill=white,inner sep=1.2pt},
  filled/.style={circle,draw,fill=black,inner sep=1.2pt},
}

\draw[pattern=north east lines, pattern color=gray] (0,3) rectangle (1.5,4);
\node[filled] at (0,4) {};
\node[filled] at (0,3) {};
\node[filled] at (1.5,3) {};
\node[filled] at (1.5,4) {};
\node at (0.75,3.5) {$P_1$};

\begin{scope}

\fill[pattern=north east lines, pattern color=gray] (1.8,4)--(1.8,3.1)--(3.185,4)--cycle;

\fill[pattern=north east lines, pattern color=gray] (1.92,3)--(3.3,3)--(3.3,3.9)--cycle;

\draw[dashed] (1.8,4)--(1.8,3.1);
\draw (1.8,4)--(3.185,4);
\draw[dashed] (1.92,3)--(3.3,3.9);

\draw (1.8,3.1)--(3.185,4);
\draw (1.92,3)--(3.3,3);
\draw (3.3,3)--(3.3,3.9);

\node[open] at (1.8,4) {};
\node[open] at (1.8,3.1) {};
\node[open] at (1.92,3) {};

\node[filled] at (3.3,3) {};

\node[open] at (3.3,3.9) {};
\node[filled] at (3.185,4) {};
    
\end{scope}


\begin{scope}[shift={(3.5,3)}]
\fill[pattern=north east lines, pattern color=gray] (0,0) rectangle (1.5,1);
\node[open] (A) at (0,0) {};
\node[open] (B) at (1.5,0) {};
\node[open] (C) at (1.5,1) {};
\node[open] (D) at (0,1) {}; 

\draw (A) -- (B);
\draw[dashed] (B)--(C);
\draw (C)--(D); 
\draw[dashed] (D)--(A);
\end{scope}

\begin{scope}[shift={(-1.8,-1.2)}]

\fill[pattern=north east lines, pattern color=gray] (1.8,4)--(1.8,3.1)--(3.185,4)--cycle;

\fill[pattern=north east lines, pattern color=gray] (1.92,3)--(3.3,3)--(3.3,3.9)--cycle;

\draw (1.8,4)--(1.8,3.1);
\draw[dashed] (1.8,4)--(3.185,4);
\draw[dashed] (1.92,3)--(3.3,3.9);

\draw (1.8,3.1)--(3.185,4);
\draw (1.92,3)--(3.3,3);
\draw (3.3,3)--(3.3,3.9);

\node[open] at (1.8,4) {};
\node[filled] at (1.8,3.1) {};
\node[open] at (1.92,3) {};

\node[filled] at (3.3,3) {};

\node[open] at (3.3,3.9) {};
\node[open] at (3.185,4) {};
    
\end{scope}

\begin{scope}[shift={(1.81,1.8)}]
\fill[pattern=north east lines, pattern color =gray] (0,0) rectangle (1.5,1);
\node[open] (A) at (0,0) {};
\node[filled] (B) at (1.5,0) {};
\node[open] (C) at (1.5,1) {};
\node[open] (D) at (0,1) {}; 

\draw (A) -- (B);
\draw (B)--(C);
\draw[dashed] (C)--(D); 
\draw[dashed] (D)--(A);
\end{scope}


\begin{scope}[shift={(3.5,0.6)}]
\fill[pattern=north east lines, pattern color =gray] (0,0) rectangle (1.5,1);
\node[open] (A) at (0,0) {};
\node[open] (B) at (1.5,0) {};
\node[open] (C) at (1.5,1) {};
\node[open] (D) at (0,1) {}; 

\draw[dashed] (A) -- (B);
\draw[dashed] (B)--(C);
\draw[dashed] (C)--(D); 
\draw[dashed] (D)--(A);
\end{scope}

\begin{scope}[shift={(0,0.6)}]
\fill[pattern=north east lines, pattern color =gray] (0,0) rectangle (1.5,1);
\node[open] (A) at (0,0) {};
\node[open] (B) at (1.5,0) {};
\node[open] (C) at (1.5,1) {};
\node[open] (D) at (0,1) {}; 

\draw[dashed] (A) -- (B);
\draw (B)--(C);
\draw[dashed] (C)--(D); 
\draw (D)--(A);
\end{scope}


\begin{scope}[shift={(0.02,-2.4)}]

\fill[pattern=north east lines, pattern color=gray] (1.8,4)--(1.8,3.1)--(3.185,4)--cycle;

\fill[pattern=north east lines, pattern color=gray] (1.92,3)--(3.3,3)--(3.3,3.9)--cycle;

\draw[dashed] (1.8,4)--(1.8,3.1);
\draw[dashed] (1.8,4)--(3.185,4);
\draw[dashed] (1.92,3)--(3.3,3.9);

\draw (1.8,3.1)--(3.185,4);
\draw[dashed] (1.92,3)--(3.3,3);
\draw (3.3,3)--(3.3,3.9);

\node[open] at (1.8,4) {};
\node[open] at (1.8,3.1) {};
\node[open] at (1.92,3) {};

\node[open] at (3.3,3) {};

\node[open] at (3.3,3.9) {};
\node[open] at (3.185,4) {};
    
\end{scope}

\begin{scope}[shift={(1.71,-1.2)}]

\fill[pattern=north east lines, pattern color=gray] (1.8,4)--(1.8,3.1)--(3.185,4)--cycle;

\fill[pattern=north east lines, pattern color=gray] (1.92,3)--(3.3,3)--(3.3,3.9)--cycle;

\draw[dashed] (1.8,4)--(1.8,3.1);
\draw[dashed] (1.8,4)--(3.185,4);
\draw[dashed] (1.92,3)--(3.3,3.9);

\draw (1.8,3.1)--(3.185,4);
\draw[thick] (1.92,3)--(3.3,3);
\draw[dashed] (3.3,3)--(3.3,3.9);

\node[open] at (1.8,4) {};
\node[open] at (1.8,3.1) {};
\node[open] at (1.92,3) {};

\node[open] at (3.3,3) {};

\node[open] at (3.3,3.9) {};
\node[open] at (3.185,4) {};
\node at (2.95,3.3) {};
\end{scope}
\end{tikzpicture}
    \caption{An \il{SE}-Partition of the $2$-torus.}
    \label{fig:SEdecomp}
\end{figure}

 Note that most of our partition classes look like \il{S}-partitionable classes. That is, $\partial P_1$ is \il{S}-partitionable and for the majority of the classes we have $\Gamma_{\sigma_i}=(P_i\setminus\{\sigma_i\}) \uplus \partial \Gamma_{\sigma_i}$ with $\Gamma_{\sigma_i}$ near-Eulerian and $\tilde \Sigma\Gamma_{\sigma_i}$ \il{S}-partitionable, except for two distinguished classes that have a slightly different structure:
 
\begin{figure}[!ht]
    \centering
    
\begin{tikzpicture}[scale=1.8, thick]
\tikzset{
  open/.style={circle,draw,fill=white,inner sep=1.2pt},
}
\tikzset{
  open/.style={circle,draw,fill=white,inner sep=1.2pt},
  filled/.style={circle,draw,fill=black,inner sep=1.2pt},
}
    
\fill[pattern=north east lines, pattern color =gray] (0,0) rectangle (1.5,1);
\node[open] (A) at (0,0) {};
\node[open] (B) at (1.5,0) {};
\node[open] (C) at (1.5,1) {};
\node[open] (D) at (0,1) {}; 

\draw (A) -- (B);
\draw[dashed] (B)--(C);
\draw (C)--(D); 
\draw[dashed] (D)--(A);
\end{tikzpicture}
    \caption{Partition class with a non-near Eulerian $\Gamma_{\sigma_i}$.}
\end{figure}
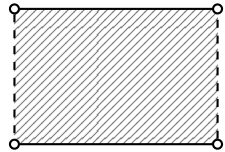
However, by removing the facet of the partition class, it can be decomposed into two disjoint components, each one coming from an \il{S}-partitionable class. In the \il{S}-partitionable proof, the vertex $v$ representing the (middle) chain $\{\sigma_i\}$ will have two components that it could be associated to. We compensate for this issue by duplicating $v$ so that the rank set $\{3\}$ is counted twice. This is demonstrated in Figure~\ref{fig:doublesemisusp}, by seeing the semi-suspension of each component.

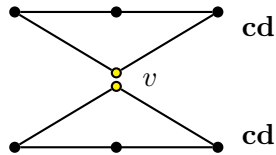
\begin{figure}[!ht]
    \centering
    
\begin{tikzpicture}[scale=1.8, thick]

\tikzset{
  open/.style={circle,draw,fill=white,inner sep=1.2pt},
  filled/.style={circle,draw,fill=black,inner sep=1.2pt},
  filledy/.style={circle,draw,fill=yellow,inner sep=1.2pt},
}
    


\node[filled] (A) at (0,0) {};
\node[filled] (B) at (1.5,0) {};
\node[filled] (C) at (1.5,1) {};
\node[filled] (D) at (0,1) {}; 
\node[filled] (E) at (0.75,0) {}; 
\node[filledy] (F) at (0.75,0.45) {}; 
\node[filledy] (G) at (0.75,0.55) {}; 
\node[filled] (H) at (0.75,1) {};

\draw (A) -- (B);
\draw (C)--(D); 
\draw (A)--(F);
\draw (B)--(F);
\draw (C)--(G);
\draw (D)--(G);

\node at (1.8,0.9) {$\g{cd}$}; 
\node at (1.8,0.1) {$\g{cd}$}; 
\node at (1,0.5) {$v$};
\end{tikzpicture}
    \caption{Decomposition into two blocks.}
    \label{fig:doublesemisusp}
\end{figure}

By the semi-Eulerian correction to the flag $f$-vector, there are exactly $\chi(\mathbb{S}^2)-\chi(T)=2$ extra elements with rank set $\{3\}$, so we can duplicate these vertices in the contribution of these two distinguished classes without issue. After making this adjustment, we illustrate the contributions of each partition class in Figure~\ref{fig:torus_partition}.

In doing so, we obtain the $\g{cd}$-index of $T$ as $\Phi(T)=\g{c}^3+7\g{dc}+13\g{cd}$ and this coincides the $\g{cd}$-index of Definition \ref{def:semicd} and Theorem \ref{th:semicd}.
\newpage 
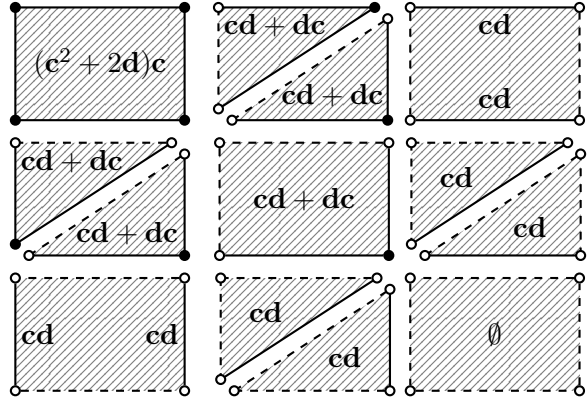
\begin{figure}[!ht]
\centering
\begin{tikzpicture}[scale=1.5, thick]
\tikzset{
  open/.style={circle,draw,fill=white,inner sep=1.2pt},
}
\tikzset{
  open/.style={circle,draw,fill=white,inner sep=1.2pt},
  filled/.style={circle,draw,fill=black,inner sep=1.2pt},
}

\draw[pattern=north east lines, pattern color=gray] (0,3) rectangle (1.5,4);
\node[filled] at (0,4) {};
\node[filled] at (0,3) {};
\node[filled] at (1.5,3) {};
\node[filled] at (1.5,4) {};
\node at (0.75,3.5) {$(\textbf{c}^2+2\textbf{d})\textbf{c}$};

\begin{scope}

\fill[pattern=north east lines, pattern color=gray] (1.8,4)--(1.8,3.1)--(3.185,4)--cycle;

\fill[pattern=north east lines, pattern color=gray] (1.92,3)--(3.3,3)--(3.3,3.9)--cycle;

\draw[dashed] (1.8,4)--(1.8,3.1);
\draw (1.8,4)--(3.185,4);
\draw[dashed] (1.92,3)--(3.3,3.9);

\draw (1.8,3.1)--(3.185,4);
\draw (1.92,3)--(3.3,3);
\draw (3.3,3)--(3.3,3.9);

\node[open] at (1.8,4) {};
\node[open] at (1.8,3.1) {};
\node[open] at (1.92,3) {};

\node[filled] at (3.3,3) {};

\node[open] at (3.3,3.9) {};
\node[filled] at (3.185,4) {};

\node at (2.3,3.84) {$\textbf{cd}+\textbf{dc}$};
\node at (2.81,3.22) {$\textbf{cd}+\textbf{dc}$};
\end{scope}


\begin{scope}[shift={(3.5,3)}]
\fill[pattern=north east lines, pattern color = gray] (0,0) rectangle (1.5,1);
\node[open] (A) at (0,0) {};
\node[open] (B) at (1.5,0) {};
\node[open] (C) at (1.5,1) {};
\node[open] (D) at (0,1) {}; 

\draw (A) -- (B);
\draw[dashed] (B)--(C);
\draw (C)--(D); 
\draw[dashed] (D)--(A);
\node at (0.75,0.85) {$\textbf{cd}$};
\node at (0.75,0.2) {$\textbf{cd}$};
\end{scope}

\begin{scope}[shift={(-1.8,-1.2)}]

\fill[pattern=north east lines, pattern color=gray] (1.8,4)--(1.8,3.1)--(3.185,4)--cycle;

\fill[pattern=north east lines, pattern color=gray] (1.92,3)--(3.3,3)--(3.3,3.9)--cycle;

\draw (1.8,4)--(1.8,3.1);
\draw[dashed] (1.8,4)--(3.185,4);
\draw[dashed] (1.92,3)--(3.3,3.9);

\draw (1.8,3.1)--(3.185,4);
\draw (1.92,3)--(3.3,3);
\draw (3.3,3)--(3.3,3.9);

\node[open] at (1.8,4) {};
\node[filled] at (1.8,3.1) {};
\node[open] at (1.92,3) {};

\node[filled] at (3.3,3) {};

\node[open] at (3.3,3.9) {};
\node[open] at (3.185,4) {};

\node at (2.3,3.84) {$\textbf{cd}+\textbf{dc}$};
\node at (2.79,3.20) {$\textbf{cd}+\textbf{dc}$};
\end{scope}

\begin{scope}[shift={(1.81,1.8)}]
\fill[pattern=north east lines, pattern color=gray] (0,0) rectangle (1.5,1);
\node[open] (A) at (0,0) {};
\node[filled] (B) at (1.5,0) {};
\node[open] (C) at (1.5,1) {};
\node[open] (D) at (0,1) {}; 

\draw (A) -- (B);
\draw (B)--(C);
\draw[dashed] (C)--(D); 
\draw[dashed] (D)--(A);

\node at (0.75,0.5) {$\textbf{cd}+\textbf{dc}$};
\end{scope}


\begin{scope}[shift={(3.5,0.6)}]
\fill[pattern=north east lines, pattern color=gray] (0,0) rectangle (1.5,1);
\node[open] (A) at (0,0) {};
\node[open] (B) at (1.5,0) {};
\node[open] (C) at (1.5,1) {};
\node[open] (D) at (0,1) {}; 

\draw[dashed] (A) -- (B);
\draw[dashed] (B)--(C);
\draw[dashed] (C)--(D); 
\draw[dashed] (D)--(A);

\node at (0.75,0.5) {$\emptyset$};
\end{scope}

\begin{scope}[shift={(0,0.6)}]
\fill[pattern=north east lines, pattern color=gray] (0,0) rectangle (1.5,1);
\node[open] (A) at (0,0) {};
\node[open] (B) at (1.5,0) {};
\node[open] (C) at (1.5,1) {};
\node[open] (D) at (0,1) {}; 

\draw[dashed] (A) -- (B);
\draw (B)--(C);
\draw[dashed] (C)--(D); 
\draw (D)--(A);

\node at (0.2,0.5) {$\textbf{cd}$};
\node at (1.3,0.5) {$\textbf{cd}$};
\end{scope}


\begin{scope}[shift={(0.02,-2.4)}]

\fill[pattern=north east lines, pattern color=gray] (1.8,4)--(1.8,3.1)--(3.185,4)--cycle;

\fill[pattern=north east lines, pattern color=gray] (1.92,3)--(3.3,3)--(3.3,3.9)--cycle;

\draw[dashed] (1.8,4)--(1.8,3.1);
\draw[dashed] (1.8,4)--(3.185,4);
\draw[dashed] (1.92,3)--(3.3,3.9);

\draw (1.8,3.1)--(3.185,4);
\draw[dashed] (1.92,3)--(3.3,3);
\draw (3.3,3)--(3.3,3.9);

\node[open] at (1.8,4) {};
\node[open] at (1.8,3.1) {};
\node[open] at (1.92,3) {};

\node[open] at (3.3,3) {};

\node[open] at (3.3,3.9) {};
\node[open] at (3.185,4) {};

\node at (2.2,3.7) {$\textbf{cd}$};
\node at (2.9,3.3) {$\textbf{cd}$};
\end{scope}

\begin{scope}[shift={(1.71,-1.2)}]

\fill[pattern=north east lines, pattern color=gray] (1.8,4)--(1.8,3.1)--(3.185,4)--cycle;

\fill[pattern=north east lines, pattern color=gray] (1.92,3)--(3.3,3)--(3.3,3.9)--cycle;

\draw[dashed] (1.8,4)--(1.8,3.1);
\draw[dashed] (1.8,4)--(3.185,4);
\draw[dashed] (1.92,3)--(3.3,3.9);

\draw (1.8,3.1)--(3.185,4);
\draw[thick] (1.92,3)--(3.3,3);
\draw[dashed] (3.3,3)--(3.3,3.9);

\node[open] at (1.8,4) {};
\node[open] at (1.8,3.1) {};
\node[open] at (1.92,3) {};

\node[open] at (3.3,3) {};

\node[open] at (3.3,3.9) {};
\node[open] at (3.185,4) {};

\node at (2.2,3.7) {$\textbf{cd}$};
\node at (2.85,3.25) {$\textbf{cd}$};
\end{scope}
\end{tikzpicture}
    \caption{Contribution of each class to $\Phi(T)$.}
    \label{fig:torus_partition}
\end{figure}

\subsection{SE-Partitionable posets}

We can now define \il{SE}-partitionability.

\begin{definition}
    Let $\Omega$ be a semi-Eulerian poset of rank $d+1$. We say that $\Omega$ is \textit{\il{SE}-partitionable} if either $\Omega=\{\emptyset\}$, or else $\Omega$ has rank at least 2 and there exists a partition of the elements of $\Omega$ into partition classes $P_\sigma \subseteq \overline{\sigma}$ for each coatom $\sigma$ of $\Omega$. Furthermore, for $\Gamma_\sigma  \coloneq \overline{P_\sigma \setminus \{\sigma\}}$ we require:
    \begin{enumerate}[(a)]
        \item There is a unique initial coatom $\sigma_1$ for which $P_{\sigma_1}=\overline{\sigma_1}$ and $P_{\sigma_1} \setminus \{\sigma_1\}$ is \il{S}-partitionable of rank $d$.
        \item For every other coatom $\sigma$, we require either: 
        \begin{itemize}
            \item $P_{\sigma}\setminus \{\sigma\}$ admits a partition into $k_\sigma$ (disjoint) sub-classes $P_\sigma^j$, where each $\Gamma_\sigma^j\coloneq\overline{P_\sigma^j}$ given as $\Gamma_{\sigma}^j = P_\sigma^j \uplus \partial \Gamma_\sigma^j$ is near-Eulerian of rank $d-1$ and $\tilde{\Sigma}\Gamma_{\sigma_i}^j$ is \il{S}-partitionable of rank $d$ such that the initial coatom is the added coatom $\tau_{\sigma}^j$.  
            \item $P_\sigma=\{\sigma\}$.
        \end{itemize}
    \end{enumerate}
    \end{definition}
We will say that the coatoms and partition classes different from the initial with $P_\sigma\ne \{\sigma\}$ are ordinary. Observe that in contrast to the definition of \il{S}-partitionable posets, each ordinary class $P_\sigma$ can contribute any number of \il{S}-partitionable posets arising from semisuspensions. This variance will encode the shift that occurs in Definition~\ref{def:semicd}. Notice that a discrete set of $N$ points $p_1,p_2,\ldots,p_N$ has an \il{SE}-partition into classes of the form $[\emptyset,p_1]$ and $\{p_i\}$ for $i\geq 2$. Meanwhile, a $2$-torus has the \il{SE}-partition provided in Figure~\ref{fig:SEdecomp}. From these two examples, we illustrate that having a $k_\sigma>1$ corresponds to adding $k_\sigma-1$ to $f_{\{d\}}(\Omega)$. Moreover, having $M$ partition classes $P_i$ such that $P_\sigma=\{\sigma\}$ corresponds to subtracting $M-1$ from $f_{\{d\}}(\Omega)$.

\subsection{SE-partitionable non-negativity} The proof is again by induction and the main difference with the proof of Theorem~\ref{Th.S-Partitionable} lies in the fact that for a middle chain $F$ of $\sigma$, the chain $\hat{F}\coloneq F \setminus \{\sigma\}$ may be contained in more than one $\partial \Gamma_\sigma^j$. However, we will compensate by introducing additional placeholders with same rank set as $F$ in our pre-blocks. This adjustment in the pre-blocks will ensure the existence of the \textbf{cd}-index, and it will coincide with the definition of the \textbf{cd}-index of $\Omega$ given in \cite{semi}. 

\begin{theorem}\label{thm:SE_part}
  The \g{cd}-index of an \il{SE}-partitionable poset can be recursively computed in terms of its \il{S}-partitionable parts of smaller rank, and is consequently non-negative.
\end{theorem}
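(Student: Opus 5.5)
We mirror the proof of Theorem~\ref{Th.S-Partitionable}, replacing $f$ by the modified flag numbers $f'$ of Definition~\ref{def:semicd}. By Theorem~\ref{th:semicd} the modified chain polynomial $\chi'_\Omega$ has a $\g{cd}$-index, so its vector $(f'_K)_K$ satisfies the generalized Dehn--Sommerville equations. Since $f'_K=f_K$ for every $K\subseteq[d-1]$, $(f'_K)$ is determined by its values on subsets of $[d-1]$. Hence it again suffices to build pre-blocks that reproduce the counts $f_K$ for $K\subseteq[d-1]$ and have a manifest $\g{cd}$-index. The base cases are the same as before, except that rank $2$ now also includes discrete point sets. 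For a discrete set of $N$ points one checks directly that $f'_{\{1\}}=N+(2-N)=2$, so $\Phi=\g{c}$, which is contributed by the initial class.

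\textbf{Size 3 pre-blocks.} For each chain $G$ with $d\notin r(G)$, let $\sigma$ be the coatom whose class contains the top element of $G$, and set $\beta(G)=G\cup\{\sigma\}$. The pre-block is $\{\beta(G),G,\text{placeholder}\}$, exactly as in the Eulerian proof. Because $P_\sigma\setminus\{\sigma\}=\biguplus_j P_\sigma^j$, each such $G$ whose top element lies in an ordinary class lies in exactly one $\tilde\Sigma\Gamma^j_\sigma$.

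\textbf{Size 4 pre-blocks.} Apply (P1) inside each $\tilde\Sigma\Gamma^j_\sigma$, which is \emph{S}-partitionable of rank $d$, so the induction hypothesis of Theorem~\ref{Th.S-Partitionable} applies. For every $\hat F\in\Delta(\partial\Gamma^j_\sigma)$ this gives a top chain $\tilde F$ ending in $P^j_\sigma$. We attach to the pre-block of $\tilde F$ a middle element of rank set $r(\hat F)\cup\{d\}$. If $\hat F\neq\emptyset$, the chains $\hat F\cup\{\sigma\}$ can lie in several boundaries $\partial\Gamma^j_\sigma$, so we use a placeholder ``middle token'' instead of the actual chain. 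Finally we must check that the pre-blocks account for every actual chain containing a coatom, with the multiset of rank sets $K\subseteq[d-1]$ matching. The disjointness argument from the Eulerian proof carries over to each sub-class.

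\textbf{Main obstacle: bookkeeping of middle tokens.} The counts of actual chains $F\ni\sigma$ with $d\in r(F)$ versus middle tokens need not match. Every rank set containing $d$ is a placeholder-type entry, and Dehn--Sommerville forces those counts once the $[d-1]$-part agrees. The only genuine discrepancy arises in the rank set $\{d\}$: the empty $\hat F$ occurs once per sub-class, giving $\sum_{\sigma\text{ ordinary}}k_\sigma$ tokens of rank set $\{d\}$ from middle chains. Together with the initial block and the bottom chains, this yields a total $\{d\}$-count of $1+\sum k_\sigma+(\text{placeholders})$. This total must equal $f'_{\{d\}}=f_{\{d\}}+\chi(\mathbb S^{d-1})-\chi(\Omega)$. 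I would verify this identity by computing $\chi(\Omega)$ from the partition. Each class $P^j_\sigma$ has $\chi(\Gamma^j_\sigma)-\chi(\partial\Gamma^j_\sigma)=(-1)^{d-1}$, because $\Gamma^j_\sigma$ is near-Eulerian (a ``ball''), and each singleton class $\{\sigma\}$ contributes $(-1)^{d-1}$. This reduces the identity to the heuristic stated after the definition: $k_\sigma>1$ adds $k_\sigma-1$, and $M$ singleton classes subtract $M-1$. The remaining work is checking that the sign conventions agree with Definition~\ref{def:semicd}. Rather than tracking chains explicitly, I would argue at the level of polynomials: the sum of the pre-block polynomials and $\chi'_\Omega$ both have $\g{cd}$-indices and agree on $[d-1]$, hence are equal.

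\textbf{The $\g{cd}$ contributions.} These are then computed verbatim as in Theorem~\ref{Th.S-Partitionable}, giving
$$\Phi(\Omega)=\Phi(\partial\overline{\sigma_1})\,\g{c}+\sum_{\sigma\text{ ord.}}\sum_{j=1}^{k_\sigma}\Phi(\partial\Gamma^j_\sigma)\,\g{d}+\sum_{\sigma\text{ ord.}}\sum_{j}\sum_{\omega\text{ ord. in }\tilde\Sigma\Gamma^j_\sigma}\mathcal C_{\tilde\Sigma\Gamma^j_\sigma}(\omega)\,\g{c}.$$
Every term is non-negative by Theorem~\ref{Th.S-Partitionable} and Remark~\ref{rem:boundary_is_S_part}.
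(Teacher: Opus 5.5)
Your proposal is correct and follows essentially the same route as the paper's proof. It uses the same size-3 pre-blocks, and one middle placeholder for each pair $(j,\hat F)$ with $\hat F\in\Delta(\partial\Gamma^j_\sigma)$, inserted into the pre-block of the top chain $\tilde F_j$ in $\tilde\Sigma\Gamma^j_\sigma$. It then observes that the resulting collection has a manifest \textbf{cd}-index and agrees with $\chi'_\Omega$ on every $K\subseteq[d-1]$, so the generalized Dehn--Sommerville equations force equality everywhere, including at $K=\{d\}$. Consequently the Euler-characteristic bookkeeping in your ``main obstacle'' paragraph is not needed, and your final polynomial-level argument does not rely on it. As written, that bookkeeping has a sign slip: each sub-class $P^j_\sigma$ consists of faces of dimension at most $d-2$ and contributes $(-1)^{d-2}$, not $(-1)^{d-1}$. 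With the correct sign, each ordinary class totals $(-1)^{d-1}(1-k_\sigma)$, which is what recovers the heuristic.
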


\begin{proof} Let $\Omega$ be an \il{SE}-partitionable poset of rank $d+1$. If $d=0$, then $\Omega=\{\emptyset\}$ and has \g{cd}-index equal to $1$. If $d=1$, with coatoms $\sigma_1,\hdots,\sigma_t$, the only possible \il{SE}-partition is given by $P_{\sigma_1}=\{\hat0,\sigma_1\},\{\sigma_2\},\hdots,\{\sigma_t\}$. Taking into account the contribution of $\g{c}$ coming from the \il{S}-partitionable class $\partial \overline{\sigma_1} = \{\emptyset\}$, together with empty contributions for the partition classes of size $1$, we obtain $\Phi(\Omega)=\Phi(\partial\overline{\sigma_1})\cdot \g{c}=\g{c}$. Since the semi-Eulerian \g{cd}-index of $\Omega$ is obtained by removing $\chi(\Omega)-\chi(\mathbb{S}^0)$ chains of length $1$ containing (single) coatoms of $\Omega$ we acquire the desired $\g{cd}$-index. Assume $d\geq 2$ and recall that properties (P1) and (P2) hold for all \il{S}-partitionable posets.
\medskip
\newline 
    \textbf{Size 3 pre-blocks:} As in the proof of Theorem~\ref{Th.S-Partitionable}, use the \il{SE}-partition to define the following pre-block of size $3$ for every $G\in \Delta(\Omega)$ with $d\notin r(G)$, 
    \newline 
    $$\{\beta(G),G,r(G)\cup\{d\}\}_{G:d\notin r (G)}.$$
    \medskip
    \newline
    \textbf{Size 4 pre-blocks:} Let $F$ be an element of $\Delta(\Omega)$ that satisfies $d\in r(F)$, and upon removing $\sigma$ from the chain $F$, to create the chain $\hat{F}$, we have that $\hat{F}\in \Delta(\partial\Gamma_\sigma^j)$ for some ordinary coatom $\sigma$. We call such chains \textit{middle chains of $\sigma$}. Then, for every middle chain $F$ of $\sigma$, and each $j$ such that $\hat{F} \in \Delta(\Gamma_\sigma^j$), use the \textit{S}-partition of $\tilde \Sigma \Gamma_\sigma^j$ to consider the top chain $\tilde{F}_j$ of $\hat{F}$ in $\Sigma \tilde{\Gamma}_\sigma^{j}$, obtained by adjoining a cocoatom of $\Omega$ within $P_\sigma^{j} \subseteq P_\sigma$. Insert a placeholder with rank set $r(F)$ into the pre-block of size $3$ that contains $\tilde{F}_j$ to create a pre-block of size $4$ of the form:$$\{r(F),\beta(G),G=\tilde{F}_j,r(G)\cup \{d\}\}.$$
    By property (P1), this assignment is well-defined.
    Finally, these blocks indeed have size four, as no two middle chains are placed in the same pre-block of size three. Indeed, this is guaranteed by the fact that $\tilde{F}_j$ lies in $P_\sigma^j$, and the fact that the collection of all $P_\sigma^{j}$ forms a partition of $P_\sigma\setminus \{\sigma\}$.
\medskip
\newline 
\textbf{Placeholders:} We will redeem the placeholders in a similar way as the proof of Theorem~\ref{Th.S-Partitionable}. Let $\mathcal{B}$ denote the collection of chains (with placeholders) in the constructed pre-blocks. By construction, we have
$$
f_K(\mathcal{B}) = f_K(\Omega) = f'_K(\Omega),$$
for all $K \subseteq [d-1]$. Since the modified chain polynomial of $\Omega$ satisfies the generalized Dehn--Sommerville equations and $\Omega$ has a \g{cd}-index, it suffices to show that $\mathcal{B}$ has a \g{cd}-index. In that case, it must be that $f_K(\mathcal{B})=f'_K(\Omega)$ for every $K\subseteq [d]$ because of the generalized Dehn--Sommerville equations.
\medskip
\newline 
\textbf{The \textbf{cd}-index:} Fix a coatom $\sigma$. We shall count the contributions of $\sigma$ from its pre-blocks as in the \il{S}-partitionable case. Notice that $\sigma$ will have no associated pre-blocks if $P_\sigma=\{\sigma\}$; it will have an empty contribution to the \g{cd}-index of $\Omega$ and we consider $k_\sigma$ equal to $0$. Consider all pre-blocks of size four of the form $\{r(F),\beta(G),G,r(G) \cup\{d\}\}$ with $\sigma \in \beta(G)$. Then, it contributes $S_h(\hat{F})(\g{ab}+\g{ba})$ to the \g{a}\g{b}-polynomial of $\Omega$ as in the proof of Theorem~\ref{Th.S-Partitionable}. Furthermore, because there is a bijection (given by adding/deleting $d$ from the rank set) between placeholders corresponding to middle chains of $\sigma$ and the disjoint union $\uplus_{j=1}^{k_\sigma}\Delta(\partial\Gamma_\sigma^j)$, we conclude that all of the pre-blocks of size 4 that $\sigma$ contributes yield $\sum_{j=1}^{k_\sigma}\Phi(\partial \Gamma_\sigma^{j})\cdot \mathbf{d}$. Therefore, the collection of all pre-blocks of size $4$ gives a contribution of 
$$\sum_{\substack{\sigma \text{ ordinary\ }\\ \text{in $\Omega$} }}\sum_{j=1}^{k_\sigma}\Phi(\partial \Gamma_\sigma^{j})\cdot \mathbf{d}$$
to the \textbf{cd}-index of $\Omega$. We reiterate that by Theorem $\ref{Th.S-Partitionable}$ each summand is a non-negative sum of \textbf{cd}-words. Therefore, the set of all pre-blocks of size four contribute a non-negative collection of \textbf{cd}-words to the \textbf{cd}-index of $\Omega$. It remains to count the contributions of the pre-blocks of size 3. Consider $G$ with $d\not \in r(G)$ that is in a pre-block of size 3. Then, the pre-block of size three given by $\{\beta(G),G,r(G)\cup \{d\}\}$ contributes $T_h(G)(\g{a}+\g{b})$ to the \g{ab}-polynomial of $\Omega$ as in the proof of Theorem~\ref{Th.S-Partitionable}.
We shall break into two cases. First, we sum over all chains $G$ such that the initial coatom $\sigma_1\in \beta(G)$. From these, we acquire a non-negative contribution of $$\Phi(\partial \overline{\sigma_1})\cdot \g{c}$$ because $\partial \overline{\sigma_1}$ is \il{S}-partitionable of rank $d$. It remains to count the contributions of the remaining pre-blocks of size 3. We begin by investigating which chains $G$ with $d\not \in r(G)$ are contained in these remaining pre-blocks. Such chains have the following properties:
\begin{itemize}
    \item The highest ranked element of $G$ lies in some partition class $P_\sigma$ for some ordinary coatom.
    \item The chain $G$ is not the top chain of some $\hat{F}\in \Delta(\partial \Gamma_\sigma^{j})$ in $\tilde{\Sigma}\Gamma_\sigma^{j}$. 
\end{itemize}
These remaining chains $G$ are exactly the contributions of the ordinary \il{S}-partition classes in the recursive \il{S}-partition of each $\tilde{\Sigma}\Gamma_\sigma^{j}$. By property (P2) of Theorem~\ref{Th.S-Partitionable}, we find that each such contribution to the \textbf{cd}-index of $\tilde{\Sigma}\Gamma_\sigma$ is non-negative. Let $\mathcal{C}_{\Omega}(\sigma)$ denote the contribution of $\sigma$ to the \textbf{cd}-index of $\Omega$. We get the following recursive formula for the \g{cd}-index of $\Omega$.
$$\Phi(\Omega)=\Phi(\partial \overline{\sigma_1})\cdot \g{c} + \sum_{\substack{\sigma \text{ ordinary\ }\\ \text{in $\Omega$} }} \sum_{j=1}^{k_\sigma}\Phi(\partial \Gamma_\sigma^{j})\cdot \mathbf{d} + \sum_{\substack{\sigma \text{ ordinary\ }\\ \text{in $\Omega$} }}\sum_{j=1}^{k_\sigma}\sum_{\substack{\ \omega \text{ ordinary}\\ \text{in $\tilde\Sigma\Gamma_\sigma^j$} }}\mathcal{C}_{\tilde{\Sigma}\Gamma_{\sigma_i}^{j}}(\omega) \cdot \g{c}.$$
\end{proof}\quad

\subsection{Consequences}

\begin{example} Consider the \il{SE}-partition of the $2$-torus $T$ given in Figure~\ref{fig:SE-Torus}. By adding the contribution of each partition class, we obtain the \g{cd}-index of $T$ as $\Phi(T)=\g{c}^3+11\g{dc}+9\g{cd}.$
\end{example}

\begin{figure}[!ht]
    \centering
    \begin{tikzpicture}[scale=1, thick]
    \coordinate (A) at (0,0);
    \coordinate (B) at (0,-2);
    \coordinate (C) at (1,-2);
    \coordinate (D) at (2,-1);
    \coordinate (E) at (2,0);

    \coordinate (F) at (0,-2.5);
    \coordinate (G) at (0,-4.5);
    \coordinate (H) at (1,-4.5);
    \coordinate (I) at (1,-2.5);

    \coordinate (J) at (0,-5);
    \coordinate (K) at (0,-7);
    \coordinate (L) at (2,-7);
    \coordinate (M) at (2,-6);
    \coordinate (N) at (1,-5);

    \coordinate (Ñ) at (3,-6);
    \coordinate (O) at (3,-7);
    \coordinate (P) at (5,-7);
    \coordinate (Q) at (5,-6);

    \coordinate (R) at (6,-6);
    \coordinate (S) at (6,-7);
    \coordinate (T) at (8,-7);
    \coordinate (U) at (8,-5);
    \coordinate (V) at (7,-5);

    \coordinate (W) at (7,-4.5);
    \coordinate (Y) at (8,-4.5);
    \coordinate (X) at (8,-2.5);
    \coordinate (Z) at (7,-2.5);

    \coordinate (1) at (7,-2);
    \coordinate (2) at (8,-2);
    \coordinate (3) at (8,0);
    \coordinate (4) at (6,0);
    \coordinate (5) at (6,-1);

    \coordinate (6) at (3,0);
    \coordinate (7) at (3,-1);
    \coordinate (8) at (5,-1);
    \coordinate (9) at (5,0);

\foreach \p/\q in {A/B, B/C, D/E, A/E, J/N, Ñ/O, R/S, U/T,W/Y, X/Y, 1/2, 3/4, 8/9, 6/9}
  \draw (\p)--(\q);

\foreach \p/\q in {C/D, F/G, G/H, H/I, I/F, J/K, K/L, L/M, M/N, O/P, P/Q, Q/Ñ, S/T, U/V, V/R, X/Z, Z/W, 5/1, 2/3, 4/5, 7/8, 6/7}
  \draw[dashed] (\p)--(\q);
  
\foreach \p/\q/\r/\s in {F/G/H/I, Ñ/O/P/Q, W/Y/X/Z, 7/8/9/6}
   \fill[pattern=north east lines, pattern color=gray] (\p)--(\q)--(\r)--(\s)--cycle;
\foreach \p/\q/\r/\s/\t in {A/B/C/D/E, J/K/L/M/N, R/S/T/U/V, 1/2/3/4/5}
   \fill[pattern=north east lines, pattern color=gray] (\p)--(\q)--(\r)--(\s)--(\t)--cycle;

 \foreach \p in {A,B,E,Y,9}
\filldraw[fill=black, draw=black, thick] (\p) circle (0.08);
\foreach \p in {C,D,F,G,H,I,J,K,L,M,N,Ñ,O,P,Q,R,S,T,U,V,W,X,Z,1,2,3,4,5,6,7,8}
\filldraw[fill=white, draw=black, thick] (\p) circle (0.08);

\filldraw[pattern=north east lines, pattern color=gray] 
  ($(I)+(0.5,0)$)--($(H)+(0.5,0)$)--($(Ñ)+(0,0.3)$)--($(Q)+(0,0.3)$)--($(W)+(-0.5,0)$)--($(Z)+(-0.5,0)$)--($(8)+(0,-0.3)$)--($(7)+(0,-0.3)$)--cycle;    
\filldraw[fill=black] ($(I)+(0.5,0)$) circle (0.08);
\filldraw[fill=black]($(H)+(0.5,0)$) circle (0.08);
\filldraw[fill=black]($(Ñ)+(0,0.3)$) circle (0.08);
\filldraw[fill=black]($(Q)+(0,0.3)$) circle (0.08);
\filldraw[fill=black] ($(W)+(-0.5,0)$) circle (0.08);
\filldraw[fill=black]($(Z)+(-0.5,0)$) circle (0.08);
\filldraw[fill=black]($(8)+(0,-0.3)$) circle (0.08);
\filldraw[fill=black]($(7)+(0,-0.3)$) circle (0.08);

\node at (1,-0.75) {$3\g{dc}+\g{cd}$};
\node at (-0.3,-3.5) {$\emptyset$};
\node at (0.8, -6.15) {$\g{cd}$};
\node at (4, -6.5) {$\g{cd}$};
\node at (7.3, -6.15) {$2\g{cd}$};
\node at (8.8, -3.5) {$\g{dc}+\g{cd}$};
\node at (7.3, -0.75) {$2\g{cd}$};
\node at (4,-0.5) {$\g{dc}+\g{cd}$};
\node at (4,-3.5) {$(\g{c}^2+6\g{d})\g{c}$};
    \end{tikzpicture}
    \caption{An \textit{SE}-partition of the torus.}
    \label{fig:SE-Torus}
\end{figure}
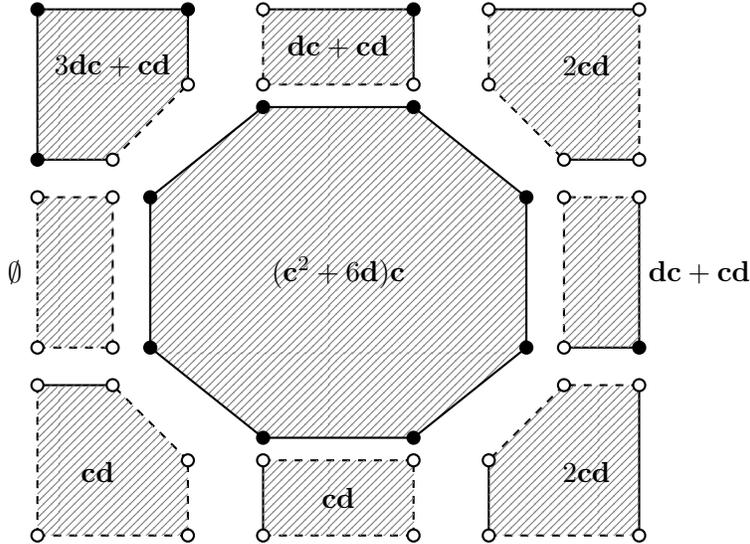

\begin{theorem}
    All semi-Eulerian partitionable simplicial complexes are \il{SE}-partitionable. 
\end{theorem}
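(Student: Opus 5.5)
The plan is to reuse the partition structure exactly as in the proof of Corollary~\ref{cor:part_implies_Spart}. The only change is that the Eulerian condition $h_d=1$ is no longer available. The relaxed clause (b) of the \il{SE}-definition, which allows arbitrarily many classes of the form $P_\sigma=\{\sigma\}$, will absorb this.

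Let $\Delta$ be a semi-Eulerian partitionable simplicial complex of dimension $d-1$, so its face poset has rank $d+1$. If $\Delta=\{\emptyset\}$ there is nothing to prove, so assume $d\ge 1$. Let $\sigma_1,\dots,\sigma_t$ be the facets and $[R_i,\sigma_i]$ the Boolean intervals of the partition. Since the partition counts the $h$-vector by the sizes of the restriction faces and $h_0=1$, exactly one restriction face is empty; relabel so that $R_1=\emptyset$. Set $P_{\sigma_i}=[R_i,\sigma_i]$. Then $P_{\sigma_1}=\overline{\sigma_1}$, and $P_{\sigma_1}\setminus\{\sigma_1\}=\partial\overline{\sigma_1}$ is the boundary of a simplex. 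This boundary is \il{S}-shellable, hence \il{S}-partitionable by the corollary on \il{S}-shellable spheres, which gives condition (a). Every index $i$ with $R_i=\sigma_i$ gives a class $P_{\sigma_i}=\{\sigma_i\}$, which is the second option in (b). There are $h_d$ such classes, and unlike the Eulerian case this number is now unrestricted.

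For the remaining indices, $\emptyset\ne R_i\subsetneq\sigma_i$, and I take $k_{\sigma_i}=1$ with the single sub-class $P_{\sigma_i}^1=[R_i,\sigma_i)$. Its closure $\Gamma_{\sigma_i}$ is generated by the facets $\sigma_i\setminus\{v\}$ of $\partial\overline{\sigma_i}$ with $v\in\sigma_i\setminus R_i$. Hence it is the join $\overline{R_i}*\partial(\sigma_i\setminus R_i)$, a $(d-2)$-ball with boundary $\partial\Gamma_{\sigma_i}=\partial R_i*\partial(\sigma_i\setminus R_i)$. A face of $\Gamma_{\sigma_i}$ lies in this boundary exactly when it does not contain $R_i$, so $\Gamma_{\sigma_i}=P^1_{\sigma_i}\uplus\partial\Gamma_{\sigma_i}$, as required.

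It remains to show that $\Gamma_{\sigma_i}$ is near-Eulerian and that $\tilde\Sigma\Gamma_{\sigma_i}$ is \il{S}-partitionable with the added coatom $\tau$ as initial coatom. Here I argue as in Corollary~\ref{cor:part_implies_Spart}. The complementary facets $\sigma_i\setminus\{v\}$ with $v\in R_i$ span the ball $\partial(R_i)*\overline{\sigma_i\setminus R_i}$, which has the same boundary sphere. Merging them into a single cell $\tau$ gives a regular CW sphere isomorphic to $\tilde\Sigma\Gamma_{\sigma_i}$. In particular $\Gamma_{\sigma_i}$ is obtained from an Eulerian poset by deleting one coatom, so it is near-Eulerian.

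The step I expect to need the most care is producing the \il{S}-shelling of this sphere that starts with $\tau$. The order is $\tau$ first, followed by the facets $\sigma_i\setminus\{v\}$ for $v\in\sigma_i\setminus R_i$ in any order. One checks inductively that each intermediate $\Gamma$ is again a join of a simplex with a simplex boundary, so it is near-Eulerian and its semisuspension is the same kind of merged simplex sphere in one lower dimension. Then \il{S}-shellability follows by induction on dimension, and the corollary on \il{S}-shellable spheres converts it into an \il{S}-partition with initial coatom $\tau$.

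The classes $P_{\sigma_i}$ partition the face poset because the Boolean intervals do. Therefore this data is an \il{SE}-partition. Consistent with the discussion after the \il{SE}-definition, the $h_d$ singleton classes are what record the shift $\chi(\mathbb{S}^{d-1})-\chi(\Delta)$ in Definition~\ref{def:semicd}.
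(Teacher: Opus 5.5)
Your proposal is correct and is essentially the paper's argument, whose proof just says the proof of Corollary~\ref{cor:part_implies_Spart} carries over: Boolean intervals as classes, $\Gamma_{\sigma_i}$ the star of $R_i$ in $\partial\overline{\sigma_i}$ with the complementary facets merged into $\tau$, and the now unrestricted number $h_d$ of singleton classes absorbed by clause (b); your explicit join description and shelling order also supply details the paper leaves out. The one check your induction does not literally cover is condition (a) for $\tau$, i.e.\ that $\partial\overline{\tau}=\partial R_i*\partial(\sigma_i\setminus R_i)$ is \il{S}-shellable, since this is not a merged simplex sphere; it is easily filled, because a join of two simplex boundaries is the boundary of a simplicial polytope and hence \il{S}-shellable by Stanley's theorem.
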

\begin{proof} This proof follows in exactly the same way as the proof of Corollary~\ref{cor:part_implies_Spart}.
\end{proof}

\begin{proposition} Every $2$-dimensional simplicial pseudomanifold $\Delta$ is \il{SE}-partitionable
\end{proposition}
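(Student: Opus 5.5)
We will build the \il{SE}-partition from a facet ordering obtained by breadth-first search (a spanning-tree order) on the dual graph of $\Delta$. Since $\Delta$ is strongly connected, we can order its facets $\sigma_1,\dots,\sigma_t$ so that every $\sigma_i$ with $i\ge 2$ shares an edge with some earlier facet. Set $P_{\sigma_i}=\overline{\sigma_i}\setminus\bigcup_{j<i}\overline{\sigma_j}$, the faces whose first facet in the order is $\sigma_i$. Then $P_{\sigma_1}=\overline{\sigma_1}$, and $\partial\overline{\sigma_1}$ is the boundary of a triangle. This is \il{S}-shellable and hence \il{S}-partitionable of rank $2$ by the corollary on \il{S}-shellable spheres, which gives condition (a). The face poset of a $2$-dimensional pseudomanifold is semi-Eulerian: each edge lies in exactly two triangles and each vertex link is a cycle (a union of cycles if we only assume pseudomanifold, in which case we first check that intervals $[v,\hat 1]$ still satisfy the Euler condition; otherwise we restrict to the normal case where links are circles). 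So the definition applies.

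Next we classify the ordinary classes. For $i\ge2$, the set $R_i=P_{\sigma_i}\setminus\{\sigma_i\}$ consists of the edges and vertices of the triangle $\sigma_i$ that are not already covered. Let $e$ be the number of new edges, $0\le e\le 2$, since at least one edge is old.
\begin{itemize}
\item If $R_i=\emptyset$, then $P_{\sigma_i}=\{\sigma_i\}$, which is allowed by the second bullet of (b).
\item Otherwise we split $R_i$ into sub-classes $P^j_{\sigma_i}$, one for each connected piece of new faces. Each piece must be one of three shapes: a single new edge with both endpoints old, whose closure is an edge, i.e.\ a $1$-ball; two new edges sharing a new vertex, whose closure is a path of two edges, again a $1$-ball, with the new vertex interior; or two new edges sharing an old vertex, which we treat as two separate pieces, each a single edge.
\item A new vertex always lies on a new edge, since it must belong to an edge of $\sigma_i$ not in earlier facets; this holds because any earlier facet containing the vertex contains an edge through it only if that edge is old. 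So no isolated new vertex occurs unless both edges through it are old.
\end{itemize}

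The last point is the main obstacle. A vertex can be new while both of its incident edges in $\sigma_i$ are old. That is impossible, because an old edge lies in an earlier facet, which then contains the vertex. Hence vertices are never stranded, and every piece $\Gamma^j_{\sigma_i}$ is a path with $\Gamma^j=P^j\uplus\partial\Gamma^j$ and $\partial\Gamma^j$ equal to its two endpoints. A path is near-Eulerian. Its semisuspension is a polygon with the added edge $\tau$ as first facet, and a polygon is \il{S}-shellable by shelling the edges in order starting from $\tau$. Therefore each $\tilde\Sigma\Gamma^j_{\sigma_i}$ is \il{S}-partitionable with initial coatom $\tau^j_{\sigma_i}$, as required in (b).

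We then verify that the classes partition $\Delta$, which is immediate from the first-occurrence definition. Finally, we note that the possibly several pieces $k_\sigma>1$ (the old-vertex case) and the several singleton classes are exactly the flexibility that the \il{SE}-definition permits. No Euler characteristic bookkeeping is needed at this stage, since Theorem~\ref{thm:SE_part} handles it through the placeholders.
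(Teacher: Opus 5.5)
Your proposal is correct and follows the paper's route. You order the facets so that each new triangle shares an edge with an earlier one, take first-occurrence classes, and check that what remains of each triangle boundary is empty, a single path ($k_\sigma=1$), or two new edges meeting at an old vertex ($k_\sigma=2$); you carry out this case check more explicitly than the paper does. On your hedge about semi-Eulerianness, there is no need to restrict to normal pseudomanifolds: a vertex link that is a disjoint union of cycles has as many vertices as edges, so $[v,\hat 1]$ contains as many triangles as edges and therefore equally many elements of odd and even rank.
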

\begin{proof} Choose an initial facet $\sigma_1$ and take the initial partition class as $P_{\sigma_1}=\langle \sigma_1\rangle$.  We then build a partition inductively by adding one adjacent facet at a time.

Assume that $P_{\sigma_1},\dots,P_{\sigma_{i-1}}$ are already defined.  
Choose a facet $\sigma_i\in\Delta$ that is adjacent to the subcomplex $
\langle \sigma_1,\dots,\sigma_{i-1}\rangle,$ and consider $P_{\sigma_i}$ as the set of faces of $\sigma_i$ not already contained in a previous class:
$$P_{\sigma_i}
=\{\tau\in\langle\sigma_i\rangle :\ \tau\notin \langle\sigma_1,\dots,\sigma_{i-1}\rangle\}.
$$

The set $P_{\sigma_1}\setminus\{\sigma_1\}$ forms the boundary of a $2$-simplex, hence it is \il{S}-partitionable.  
For every subsequent class $P_{\sigma_i}$, one verifies that any upward-closed subset of the boundary of a $2$-simplex without a facet is either \il{S}-partitionable or decomposes into two subclasses satisfying the \il{SE}-conditions. Therefore, the inductively constructed partition is an \il{SE}-partition.
\end{proof}

\begin{remark}\label{rem:simp_mflds_are_SE} In the $2$-dimensional case, the above construction produces an \il{SE}-partition by taking upward-closed subsets of the faces of each facet, chosen along a spanning tree of the facet–ridge graph of the pseudomanifold $\Delta$.

In higher dimensions, such a strategy does not seem to work. Namely, one may form upward closed subsets $P_\sigma$ in this way. However, some subsets $P_\sigma$ may fail the condition that $P_\sigma$ breaks down into subclasses $P_\sigma^j$ with $\Gamma_\sigma^j=P_\sigma^j\uplus \partial \Gamma_\sigma^j$. Hence, we ask if the non-negativity of the classes that do form \il{SE}-partition classes is sufficient to guarantee that the entire \g{cd}-index is non-negative. Additionally, as proven in Juhnke-Kubitzke-Samper-Venturello~\cite{semi}, all simplicial manifolds have a non-negative \g{cd}-index.
\end{remark} 

\begin{proposition} If $\Omega, \Gamma$ are two \il{S}-partitionable posets of rank $3$ with \il{S}-partitions $\{P_\sigma\}_{\sigma\in \Omega}$ and $\{Q_\tau\}_{\tau \in \Gamma}$ respectively, then the product $\Omega\times \Gamma$ is \il{SE}-partitionable with partition classes $P_\sigma\times Q_\tau$.
\end{proposition}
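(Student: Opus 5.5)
The plan is to classify the classes of an \il{S}-partition in rank $3$ and then check the \il{SE}-conditions case by case on the products $P_\sigma\times Q_\tau$.

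\textbf{Rank-$3$ classes.} Let $\Omega$ be \il{S}-partitionable of rank $3$ ($d=2$). Its coatoms are edges and its atoms are vertices, and Eulerianity forces every edge to have exactly two vertices. Definition~\ref{def:S_part} then leaves only three kinds of classes.
\begin{itemize}
\item The initial class is $P_{\sigma_1}=\overline{\sigma_1}=\{\hat0,v_1,v_2,\sigma_1\}$.
\item The terminal class is $P_{\sigma_t}=\{\sigma_t\}$.
\item Every ordinary class has $\Gamma_\sigma$ near-Eulerian of rank $2$, i.e.\ $\{\hat 0,v\}$, so $P_\sigma=\{v,\sigma\}$ for a single vertex $v$ of $\sigma$. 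The condition $\Gamma_\sigma=(P_\sigma\setminus\{\sigma\})\uplus\partial\Gamma_\sigma$ then says $\partial\Gamma_\sigma=\{\hat0\}$.
\end{itemize}
The same description holds for the classes $Q_\tau$ of $\Gamma$.

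\textbf{The product and its partition.} I read $\Omega\times\Gamma$ as the face poset of the product cell complex. Its proper elements are the pairs $(x,y)$ with $x\neq\hat0_\Omega$ and $y\neq\hat0_\Gamma$, and new elements $\hat0,\hat1$ are adjoined. Intervals below $\hat 1$ are products of Eulerian intervals, hence Eulerian, so $\Omega\times\Gamma$ is semi-Eulerian of rank $4$. Its coatoms are the pairs $(\sigma,\tau)$ of edges.

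I interpret $P_\sigma\times Q_\tau$ as the pairs in $P_\sigma\times Q_\tau$ with both coordinates proper, together with the new $\hat0$ placed in the class $P_{\sigma_1}\times Q_{\tau_1}$. Since $\{P_\sigma\}$ and $\{Q_\tau\}$ partition the proper parts of $\Omega$ and $\Gamma$, these classes partition $\Omega\times\Gamma\setminus\{\hat 1\}$. Moreover $P_\sigma\times Q_\tau\subseteq\overline{(\sigma,\tau)}$.

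\textbf{Case analysis.} Up to the symmetry swapping the two factors, there are six types. In each case I compute $P\setminus\{(\sigma,\tau)\}$ inside the square $\overline{(\sigma,\tau)}$, whose boundary is a $4$-cycle.
\begin{enumerate}
\item \emph{Initial $\times$ initial.} The class is the whole closure of the square. Removing the facet leaves its boundary $4$-gon, which is \il{S}-shellable and hence \il{S}-partitionable, so this is the unique initial class.
\item \emph{Initial $\times$ ordinary} ($\{v_1,v_2,e\}\times\{w,f\}$). Removing the facet leaves the open path formed by the edges $v_1\times f$, $e\times w$, $v_2\times f$ together with the interior vertices $(v_1,w),(v_2,w)$. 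Its closure is a $3$-edge path $\Gamma$ with $\partial\Gamma$ the two endpoints $(v_1,w'),(v_2,w')$, and $\Gamma=P\uplus\partial\Gamma$ holds. The semisuspension is a $4$-gon, \il{S}-partitionable with the added edge first, so we take $k=1$.
\item \emph{Ordinary $\times$ ordinary.} We get a $2$-edge path $(v,f),(v,w),(e,w)$ with its two endpoints as boundary, so $k=1$.
\item \emph{Ordinary $\times$ terminal.} We get the single open edge $(v,f)$, whose closure is an edge with its two endpoints as boundary. Its semisuspension is a digon, so $k=1$.
\item \emph{Initial $\times$ terminal.} We get the two disjoint open edges $(v_1,f)$ and $(v_2,f)$. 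These are the subclasses $P^1,P^2$, each as in case~(4), so $k=2$.
\item \emph{Terminal $\times$ terminal.} The class is the singleton $\{(\sigma_t,\tau_t)\}$, which is allowed by the second option of (b).
\end{enumerate}
In every ordinary case each closure $\Gamma^j$ is a path, and a path is near-Eulerian as a polygon with one edge deleted. Its semisuspension is a polygon, which is \il{S}-shellable starting at the added edge $\tau^j$ and therefore \il{S}-partitionable by the corollary on \il{S}-shellable spheres. Uniqueness of the initial class holds because only $P_{\sigma_1}\times Q_{\tau_1}$ equals a full closure.

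\textbf{Main obstacle.} I expect the main difficulty to be bookkeeping rather than any conceptual step. Two points need care: the convention for pairs with an empty coordinate, and verifying $\Gamma^j=P^j\uplus\partial\Gamma^j$ with the correct boundary in cases (2) and (5). For those cases I would first draw the square $\overline{(\sigma,\tau)}$ and mark which of its eight boundary elements lie in the class, exactly as in Figure~\ref{fig:SEdecomp}.
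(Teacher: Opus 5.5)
Your proposal is correct and follows the paper's route: you classify the rank-$3$ classes as an edge with two, one, or none of its vertices, take $P_{\sigma_1}\times Q_{\tau_1}$ as the initial class (its boundary is a $4$-cycle), and verify the remaining products by hand. Your six-case check, including the split into $k_\sigma=2$ subclasses in the initial$\times$terminal case, is exactly the verification the paper leaves to the reader. The only slip is in your semi-Eulerian sentence. An interval $[\hat 0,y]$ is not a Cartesian product of intervals (e.g.\ $[\hat 0,(\sigma,\tau)]$ is the face lattice of a square), and intervals $[x,\hat 1]$ with $x\neq\hat 0$ are not covered. This is easily repaired: closed cells are squares, segments or points, and each $[x,\hat 1]$ is the face poset of a link in a closed surface.
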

\begin{proof} Consider the partition $\{P_\sigma \times Q_\tau \}$ of the faces of $\Omega \times \Gamma$. First observe that $P_{\sigma_1}\times Q_{\tau_1}\setminus \{\sigma_1\times \tau_1\}$ is combinatorially equivalent to the face poset of the boundary of a square. Hence consider
$P_{\sigma_1}\times Q_{\tau_1}$ as the initial partition class of $\Omega\times \Gamma$. 

For any other coatom $\sigma \times \tau$, note that since $\Omega$ and $\Gamma$ are both Eulerian of rank $3$, 
each $P_\sigma$ and each $Q_\tau$ is combinatorially equivalent to an edge together with some (possibly none) of its vertices. Then, one can check by hand that the product $P_\sigma \times Q_\tau$ satisfies the \il{SE}-criteria in the definition of \il{SE}-partitionability. Thus, the collection $\{P_\sigma \times Q_\tau\}$ defines an \il{SE}-partition of 
$\Omega \times \Gamma$.
\end{proof}

In general, it is not true that the product partition of two \il{SE}-partitions yields an \il{SE}-partition, although we cannot rule out that an \il{SE}-partition does exist. For instance, using $\Delta_d$ to denote the boundary of the $(d+1)$-simplex, in the product $\Delta_1 \times \Delta_2$ or $\Delta_1 \times \Delta_1 \times \Delta_1$, one obtains classes that are not \il{SE}-partition classes.

\begin{remark}
    One consequence of the proof of Theorem~\ref{thm:SE_part} is that one can obtain a \g{cd}-index for some posets that are not semi-Eulerian by `manually forcing' $h_{[d]\setminus K}$ to equal $h_K$. For example, consider the pure $2$-dimensional simplicial complex $\Omega$ on $11$ vertices in Figure~\ref{fig:non_semi}.
 The upward ideal generated by $v$ shows that $\Omega$ is not semi-Eulerian. Yet, by applying the method provided in the proof of Theorem~\ref{thm:SE_part}, one can compute a \g{cd}-index of $\Omega$ to be $\g{c}^3+9\g{dc}+10\g{cd}$. However, in general, we do not have a description of how the flag $f$-vector should change when one replaces $h_{[d]\setminus K}$ with $h'_{[d]\setminus K}\coloneq h_K$ for $K\subseteq [d-1]$. 
\end{remark}

\begin{figure}[!h]
\centering 
    \begin{tikzpicture}[scale=1.49, thick]

\coordinate (A) at (0,0);
\coordinate (B) at (1,0);
\coordinate (C) at (0.5,-0.8);
\coordinate (D) at (1.5,-0.8);

\coordinate (E) at (1.5,-1.6);
\coordinate (F) at (2.5,-0.8);

\coordinate (G) at (2,0);
\coordinate (H) at (1.5,0.8);
\coordinate (I) at (2.5,0.8);
\coordinate (J) at (1.5,1.6);
\coordinate (K) at (0.5,0.8);

\foreach \p/\q/\r in {A/B/C, B/C/D, C/D/E, E/D/F, D/F/G, G/H/I, I/H/J, K/H/J, A/B/K, B/H/K}
   \fill[pattern=north east lines, pattern color=gray] (\p)--(\q)--(\r)--cycle;

\foreach \p/\q in {A/B, A/C, B/C, C/D, D/B,  E/C, E/D, E/F, D/F, D/G, G/F, G/I, G/H, I/H, I/J, J/K, H/J, K/H, K/A, B/K, B/H}
  \draw (\p)--(\q);

 \foreach \p in {A,B,C,D,E,F,H,I,J,K}
\filldraw[fill=black, draw=black, thick] (\p) circle (0.04);
\filldraw[fill=yellow, thick] (G) circle (0.05);
\node[] at ($(G)+(0.3,0)$) {$v$};
\end{tikzpicture}
\caption{A simplicial complex that is not semi-Eulerian.}\label{fig:non_semi}
\end{figure}
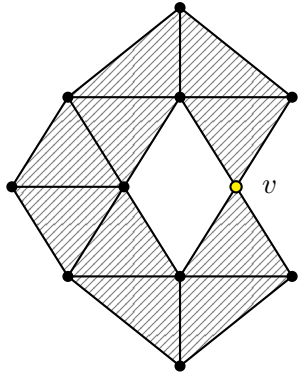

\section{Future Directions}\label{sec:future}
We collect questions that arose from this work in hopes of further developments. Our results provide new tools to prove the non-negativity of \g{cd}-indices by combinatorial means. Broadly speaking, we would like new methods to show \il{S}-partitionability and \il{SE}-partitionability of different types of posets. It is not clear if our results imply Karu's theorem, and we propose the following question.

\begin{question}
Is there an example of a Gorenstein* poset that is neither \il{S}-partitionable nor \il{SE}-partitionable? 
\end{question}

 Duval, Goeckner, Klivans, and Martin~\cite{Duval_2016} gave examples of non-partitionable Cohen-Macaulay simplicial complexes. However, their construction uses some gluings that guarantee the resulting complex is not Eulerian. Therefore, the question for Gorenstein* complexes remains open yet is likely to fail. Notice, however, that \il{SE}-partitionability may provide a slightly more flexible framework for such a conjecture, given that it is somewhat more general. 

Beyond the existence of \il{S} or \il{SE}-partitions, one may also be inclined to ask further questions about the structure of the set of all partitions on a given object. As noted in Remark~\ref{rem:rev_part}, one can make our proof of the non-negativity of the \g{cd}-index for \il{S}-partitionable posets entirely explicit, in certain cases, by looking at \il{reverse partition classes}.

\begin{question}
    Which \textit{S}-partitionable posets are reversibly partitionable?
\end{question}

When looking at partitionable simplicial complexes, the question of when a reversible partition exists also remains unclear.

\begin{question}
    Given a partitionable simplicial complex $\Omega$ with restriction faces $\{R_i\}_{i\in [t]}$ and facets $\{\sigma_i\}_{i\in [t]}$, when does the set $\{\sigma_i \setminus R_i\}_{i\in [t]}$ also form a set of restriction faces of a partition of $\Omega$?
\end{question}

A necessary condition for the answer above to  positive is that the $h$-vector of the complex has to be symmetric. Recall that any shelling order $\sigma_1,\sigma_2,\ldots,\sigma_t$ of a simplicial sphere has the property that $\sigma_t,\sigma_{t-1},\ldots,\sigma_1$ is also a shelling order. Consequently, the reversed shelling order induces reversed partitioning. Therefore, if a non-reversibly partitionable simplicial sphere exists, then it must not be shellable.

Another natural direction to explore is how the \il{S}-partitionability of  a poset $\Omega$ relates to the (simplicial) partitionability of its order complex. 

\begin{question}
  Is it true that the order complex $\Delta(\Omega)$ of every \il{S}-partitionable poset $\Omega$ is (simplicially) partitionable?
\end{question}

When $\Delta$ is a partitionable simplicial complex there is a natural combinatorial interpretation of the $h$-vectors. Namely, $h_i(\Delta)$ is the number of restriction faces of size $i$. We ask if a similar phenomenon holds for flag $h$-vectors.

\begin{question}
    For any non-negative $\g{cd}$-index, one can quickly recover a non-negative flag $h$-vector by replacing $\g{c}$ with $\g{a+b}$ and $\g{d}$ with $\g{ab+ba}$. Can one give a combinatorial answer as to why the flag $h$-vector is non-negative for \il{S}-partitionable posets?
\end{question}

Recall that all simplicial manifolds have a non-negative \g{cd}-index from Juhnke-Kubitzke, Samper, and Venturello~\cite{semi}. Therefore, we ask whether the following generalization could be true. 

\begin{question}
    Do all semi-Eulerian simplicial pseudomanifolds have a non-negative \g{cd}-index?
\end{question}

Given the combinatorial flavor of \il{S} and \il{SE}-partitionability, we hope that such an approach can shed light onto this issue. 
As noted after Remark~\ref{rem:simp_mflds_are_SE}, the guiding question is what one can say when a given partition of a poset consists mostly of \il{S} or \il{SE}-partition classes. 

More concretely, for pseudomanifolds, we suspect that we can always form partition classes in which  most $\Gamma_\sigma$ satisfy \il{SE}-partitionability criteria. The fact that each $\Gamma_\sigma$ is near-Eulerian still allows us to recursively count the \g{cd}-index of a slightly modified complex, which in turn helps us estimate the (not necessarily non-negative) contribution of the non \il{SE}-partitionable classes. Thus, the main question becomes understanding what these non \il{SE}-partitionable classes contribute and how their effect relates to the non-negative contribution of the \il{SE}-partitionable ones.

\begin{example}
  Consider the polyhedral complex $\Delta_1 \times \Delta_2$, where $\Delta_1$ and $\Delta_2$ are the boundaries of the simplicies $\langle xyz\rangle$ and $\langle abcd\rangle$, respectively and take the following facet order: 

\begin{figure}[htbp]
    \centering
    \begin{tabular}{c|c|c|c}
 $\times$ & $ xy $ & $ xz$& $ yz$  \\ \hline
$ abc$ & 1 & 2 & 7 \\ \hline 
$ abd$ & 8 &3 &4 \\ \hline 
$ acd$ & 9 &5 &11 \\ \hline 
$ bcd$ &10 &6 &12 \\
\end{tabular}
    \caption{Linear order of the facets in $\Delta_1\times \Delta_2$.}
    \label{fig:placeholder}
\end{figure}

 Partitioning the face poset assigning every face to the part of the earliest facet in the total order containing it yields a partition in which all classes satisfy the \il{SE}-partitionability hypothesis, except for those faces contained in the partition corresponding to the fourth facet. We can triangulate this facet together with the boundary of its adjacent facets and obtain a poset homeomorphic to $\mathbb{S}^1 \times \mathbb{S}^2$ in which all partition classes are \il{SE}-partitionable except for two tetrahedra, that are pictured below.
\tdplotsetmaincoords{70}{61}
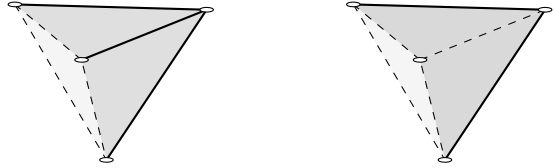
\begin{figure}[htbp]
\centering

\begin{tikzpicture}
[tdplot_main_coords,scale=3]
\begin{scope}
\coordinate (A) at (0,0.05,0.03);
\coordinate (B) at (0.7,0,0);
\coordinate (C) at (0.4,0.8,0);

\coordinate (D) at (0.35,0.32,-0.64);


\fill[gray!25] (A)--(B)--(C)--cycle;
\fill[gray!25] (B)--(C)--(D)--cycle;
\fill[gray!15,opacity=0.5] (D)--(A)--(B)--cycle;


\draw[dashed] (B)--(A);
\draw[thick] (A)--(C)--(B);
\draw[thick] (D)--(C);
\draw[dashed] (A)--(D)--(B);
\filldraw[fill=white] (C) circle (0.03);
\filldraw[fill=white] (A) circle (0.03);
\filldraw[fill=white] (B) circle (0.03);
\filldraw[fill=white] (D) circle (0.03);
\end{scope}
\begin{scope}[xshift=1.5cm]]

\coordinate (A) at (0,0.05,0.03);
\coordinate (B) at (0.7,0,0);
\coordinate (C) at (0.4,0.8,0);

\coordinate (D) at (0.35,0.32,-0.64);


\fill[gray!30] (A)--(B)--(C)--cycle;
\fill[gray!30] (B)--(C)--(D)--cycle;
\fill[gray!15,opacity=0.5] (D)--(A)--(B)--cycle;


\draw[dashed] (B)--(A);
\draw[dashed] (C)--(B);
\draw[thick] (A)--(C);
\draw[thick] (D)--(C);
\draw[dashed] (A)--(D)--(B);
\filldraw[fill=white] (C) circle (0.03);
\filldraw[fill=white] (A) circle (0.03);
\filldraw[fill=white] (B) circle (0.03);
\filldraw[fill=white] (D) circle (0.03);

\end{scope}

\end{tikzpicture}

\caption{Non \il{SE}-partitionable classes for $3$-dimensional simplicial pseudomanifolds.}
\end{figure}

The $\g{cd}$-index of this poset is given by $\g{c}^4+16\g{c}^2\g{d}+23\g{cdc}+10\g{dc}^2+34\g{d}^2$ and the contribution of the \il{SE}-partitionable parts contribute a $\g{cd}$-index equal to $\g{c}^4+17\g{c}^2\g{d}+22\g{cdc}+12\g{d}\g{c}^2+30\g{d}^2$. Subtracting these two we get that the contribution of the two faces together has to be
$-\g{c}^2\g{d}+\g{cdc}-2\g{d}\g{c}^2+4\g{d}^2$. 

\end{example}

For triangulated pseudomanifolds our expectation is that one can always find a partition in which most facets correspond to \il{SE}-partitionable classes and the few remaining facets behave like the non \il{SE}-partitionable classes above. The question is if these facets can subtract a bit from some $\g{cd}$-coefficients, but not enough to turn the contribution of the \il{SE}-partitionable part into something negative. We suspect that the number of bad facets should be somehow related to the Betti numbers of the underlying pseudomanifold and could potentially be controlled using discrete Morse theory.

Finally, we remark that there are a number of different generalizations of partitionability available in the literature and it would be interesting to know more about how they are related to this work. For example, the notion of \textit{signability} was introduced by Kleinschmidt-Onn~\cite{Kleinschmidt_1996} and in their work, \textit{totally signable} simplicial complexes were shown to satisfy McMullen's Upper Bound Theorem. Immediately after this work, Onn~\cite{ONN_1997} introduced the notion of \textit{strongly signable} posets as a generalization of \textit{dual CL-shellable} posets of Björner-Wachs~\cite{Bjrner1983}, and established the non-negativity of their flag $h$-vectors. These versions of signability vary slightly from \il{S}-partitionability and it is not clear to us if they generalize Stanley's \il{S}-shellable spheres. 

\section{Acknowledgements}
The authors thank the organizers of ECCO (Encuentro Colombiano De Combinatoria) 2024 and the hospitality of the Universidad del Cauca. DG thanks Isabella Novik for numerous suggestions, extensive guidance and instructive expository comments, and Yirong Yang for fielding questions related to partitionable simplicial complexes and the partitionability conjecture.

\printbibliography

@incollection {Bayer_2021,
    AUTHOR = {Bayer, Margaret M.},
     TITLE = {The {$cd$}-index: a survey},
 BOOKTITLE = {Polytopes and discrete geometry},
    SERIES = {Contemp. Math.},
    VOLUME = {764},
     PAGES = {1--19},
 PUBLISHER = {Amer. Math. Soc., Providence, RI},
      YEAR = {2021},
       DOI = {10.1090/conm/764/15355},
       URL = {https://doi.org/10.1090/conm/764/15355},
}

@article{Lee2010,
author = {Lee, Carl},
year = {2010},
month = {11},
pages = {},
title = {Sweeping the cd-Index and the Toric h-Vector},
volume = {18},
journal = {Electronic Journal of Combinatorics},
doi = {10.37236/553}
}

@article{Bjrner1983,
  title={On lexicographically shellable posets},
  author={Anders Bj{\"o}rner and Michelle L. Wachs},
  journal={Transactions of the American Mathematical Society},
  year={1983},
  volume={277},
  pages={323-341},
  url={https://api.semanticscholar.org/CorpusID:18914555}
}

@article{Stanley1994,
author = {Stanley, Richard P.},
journal = {Mathematische Zeitschrift},
number = {3},
pages = {483-500},
title = {Flag f-vectors and the cd-index.},
url = {http://eudml.org/doc/174665},
volume = {216},
year = {1994},
}

@article{semi,
      title={The cd-index of semi-Eulerian posets}, 
      author={Martina Juhnke-Kubitzke and José Alejandro Samper and Lorenzo Venturello},
      journal={SIAM Journal on Discrete Mathematics},
      year={to appear},
      eprint={2405.05812},
      archivePrefix={arXiv},
      primaryClass={math.CO},
      url={https://arxiv.org/abs/2405.05812}, 
}

@article{Bayer1991,
author = {Bayer, Margaret M. and Klapper, Andrew},
journal = {Discrete \& computational geometry},
number = {1},
pages = {33-48},
title = {A New Index for Polytopes.},
url = {http://eudml.org/doc/131141},
volume = {6},
year = {1991}
}

@book {Stanley_ec1,
    AUTHOR = {Stanley, Richard P.},
     TITLE = {Enumerative combinatorics. {V}olume 1},
    SERIES = {Cambridge Studies in Advanced Mathematics},
    VOLUME = {49},
   EDITION = {Second},
 PUBLISHER = {Cambridge University Press, Cambridge},
      YEAR = {2012},
     PAGES = {xiv+626},
      ISBN = {978-1-107-60262-5},
   MRCLASS = {05-02 (05A15 06-02)},
  MRNUMBER = {2868112},
}

@article {Karu_2006,
    AUTHOR = {Karu, Kalle},
     TITLE = {The {$cd$}-index of fans and posets},
   JOURNAL = {Compos. Math.},
  FJOURNAL = {Compositio Mathematica},
    VOLUME = {142},
      YEAR = {2006},
    NUMBER = {3},
     PAGES = {701--718},
      ISSN = {0010-437X,1570-5846},
   MRCLASS = {14M25 (06A07 52B05)},
  MRNUMBER = {2231198},
MRREVIEWER = {Jeremy\ L.\ Martin},
       DOI = {10.1112/S0010437X06001928},
       URL = {https://doi.org/10.1112/S0010437X06001928},
}

@phdthesis{Hachimori_2000,
    author = {Hachimori, Masahiro},
    title = {Combinatorics of constructible complexes},
    school = {Tokyo University},
    year = {2000},
}

@article{Duval_2016,
   title={A non-partitionable Cohen–Macaulay simplicial complex},
   volume={299},
   ISSN={0001-8708},
   url={http://dx.doi.org/10.1016/j.aim.2016.05.011},
   DOI={10.1016/j.aim.2016.05.011},
   journal={Advances in Mathematics},
   publisher={Elsevier BV},
   author={Duval, Art M. and Goeckner, Bennet and Klivans, Caroline J. and Martin, Jeremy L.},
   year={2016},
pages={381–395} }

@incollection{Billera_1983,
    AUTHOR = {Bayer, Margaret M. and Billera, Louis J.},
     TITLE = {Counting faces and chains in polytopes and posets},
 BOOKTITLE = {Combinatorics and algebra (Boulder, CO., 1983)},
    SERIES = {Contemp. Math.},
    VOLUME = {34},
     PAGES = {207--252},
 PUBLISHER = {Amer. Math. Soc., Providence, RI},
      YEAR = {1984},
    JOURNAL = {Combinatorics and Algebra},
       DOI = {10.1090/conm/034/777703},
       URL = {https://doi.org/10.1090/conm/034/777703},
}

@article {Billera_1985,
    AUTHOR = {Bayer, Margaret M. and Billera, Louis J.},
     TITLE = {Generalized {D}ehn-{S}ommerville relations for polytopes,
              spheres and {E}ulerian partially ordered sets},
   JOURNAL = {Invent. Math.},
  FJOURNAL = {Inventiones Mathematicae},
    VOLUME = {79},
      YEAR = {1985},
    NUMBER = {1},
     PAGES = {143--157},
       DOI = {10.1007/BF01388660},
       URL = {https://doi.org/10.1007/BF01388660},
}

@book {Ziegler_1995,
    AUTHOR = {Ziegler, G\"unter M.},
     TITLE = {Lectures on polytopes},
    SERIES = {Graduate Texts in Mathematics},
    VOLUME = {152},
 PUBLISHER = {Springer-Verlag, New York},
      YEAR = {1995},
     PAGES = {x+370},
      ISBN = {0-387-94365-X},
   MRCLASS = {52Bxx},
  MRNUMBER = {1311028},
MRREVIEWER = {Margaret\ M.\ Bayer},
       DOI = {10.1007/978-1-4613-8431-1},
       URL = {https://doi.org/10.1007/978-1-4613-8431-1},
}

@article{ONN_1997,
title = {Strongly Signable and Partitionable Posets},
journal = {European Journal of Combinatorics},
volume = {18},
number = {8},
pages = {921-938},
year = {1997},
issn = {0195-6698},
doi = {https://doi.org/10.1006/eujc.1997.0141},
url = {https://www.sciencedirect.com/science/article/pii/S0195669897901414},
author = {Shmuel Onn},
}

@incollection{Stanley_1999,
  author    = {Stanley, Richard P.},
  title     = {Positivity Problems and Conjectures in Algebraic Combinatorics},
  booktitle = {Mathematics: Frontiers and Perspectives},
  editor    = {Arnold, V. I. and Atiyah, M. F. and Lax, P. D. and Mazur, B.},
  publisher = {American Mathematical Society},
  address   = {Providence, RI},
  year      = {2000},
  pages     = {313--331},
  url       ={https://math.mit.edu/~rstan/pubs/pubfiles/116.pdf}
}

@article{Rota_1964,
author={Rota, Gian-Carlo},
title={On the foundations of combinatorial theory I. Theory of M{\"o}bius Functions},
journal={Zeitschrift f{\"u}r Wahrscheinlichkeitstheorie und Verwandte Gebiete},
year={1964},
volume={2},
number={4},
pages={340-368},
issn={1432-2064},
doi={10.1007/BF00531932},
url={https://doi.org/10.1007/BF00531932}
}

@misc{Stanley_2021,
      title={Enumerative and Algebraic Combinatorics in the 1960's and 1970's}, 
      author={Richard P. Stanley},
      year={2021},
      eprint={2105.07884},
      archivePrefix={arXiv},
      primaryClass={math.HO},
      url={https://arxiv.org/abs/2105.07884}, 
}

@article{Kleinschmidt_1996,
author={Kleinschmidt, P.
and Onn, S.},
title={Signable posets and partitionable simplicial complexes},
journal={Discrete {\&} Computational Geometry},
year={1996},
month={4},
day={01},
volume={15},
number={4},
pages={443-466},
issn={1432-0444},
doi={10.1007/BF02711519},
url={https://doi.org/10.1007/BF02711519}
}

@article{ehrenborg01,
author = {Ehrenborg, Richard},
year = {2001},
month = {09},
pages = {227-236},
title = {k-Eulerian Posets},
volume = {18},
journal = {Order},
doi = {10.1023/A:1012296719116}
}

@article{Ehrenborg_2015,
   title={Euler flag enumeration of Whitney stratified spaces},
   volume={268},
   ISSN={0001-8708},
   url={http://dx.doi.org/10.1016/j.aim.2014.09.008},
   DOI={10.1016/j.aim.2014.09.008},
   journal={Advances in Mathematics},
   publisher={Elsevier BV},
   author={Ehrenborg, Richard and Goresky, Mark and Readdy, Margaret},
   year={2015},
   month=jan, pages={85–128} }

@article{Billera_1981,
title = {A proof of the sufficiency of McMullen's conditions for f-vectors of simplicial convex polytopes},
journal = {Journal of Combinatorial Theory, Series A},
volume = {31},
number = {3},
pages = {237-255},
year = {1981},
issn = {0097-3165},
doi = {https://doi.org/10.1016/0097-3165(81)90058-3},
url = {https://www.sciencedirect.com/science/article/pii/0097316581900583},
author = {Louis J Billera and Carl W Lee},
}

@article{Karu_2023,
     author = {Karu, Kalle and Xiao, Elizabeth},
     title = {On the anisotropy theorem of {Papadakis} and {Petrotou}},
     journal = {Algebraic Combinatorics},
     pages = {1313--1330},
     year = {2023},
     publisher = {The Combinatorics Consortium},
     volume = {6},
     number = {5},
     doi = {10.5802/alco.298},
     language = {en},
     url = {https://alco.centre-mersenne.org/articles/10.5802/alco.298/}
}

@misc{papadakis_2020,
      title={The characteristic 2 anisotropicity of simplicial spheres}, 
      author={Stavros Argyrios Papadakis and Vasiliki Petrotou},
      year={2020},
      eprint={2012.09815},
      archivePrefix={arXiv},
      primaryClass={math.AC},
      url={https://arxiv.org/abs/2012.09815}
}

@misc{adiprasito_2019,
      title={Combinatorial Lefschetz theorems beyond positivity}, 
      author={Karim Adiprasito},
      year={2019},
      eprint={1812.10454},
      archivePrefix={arXiv},
      primaryClass={math.CO},
      url={https://arxiv.org/abs/1812.10454}, 
}

@book{Lundell_1969,
author = {Lundell, Albert T. and Weingram, Stephen},
address = {New York},
language = {eng},
lccn = {68026689},
publisher = {Van Nostrand Reinhold Co.},
series = {The University series in higher mathematics},
title = {The topology of CW complexes},
url = {http://www.gbv.de/dms/hbz/toc/ht000701031.pdf},
year = {1969},
}
\end{document}